  \DeclareFontShape{T1}{clm\cfrlm@rmpt\cfrlm@rmol}{bx}{sc}{<->ssub*fcm/b/sc}{}}
\let\circ\vysmwhtcircle
\chardef\fml@textampersand\the\&
\mathchardef\fml@mathampersand\mathcode`\&
\protected\def\&{%
  \ifmmode
    \expandafter\fml@mathampersand
  \else
    \expandafter\fml@textampersand
  \fi}
\newcommand*\obeyampersands{\catcode`\&=\active}
\newcommand*\texampersand{&}
{\obeyampersands\global\let&=\texampersand}
\protected\def\fml@dollar{\csname\ifmmode)\else(\fi\space\endcsname}
\newcommand*\obeydollars{\catcode`\$=\active}
{\obeydollars\global\let$=\fml@dollar}%
\newcommand*\noic{\sb{}\kern-\scriptspace\relax}
\protected\def\fml@opt@sb{%
  \ltx@ifnextchar@nospace[\fml@opt@sb@{}}%
\def\fml@opt@sb@[#1]{_{#1}}
\def\do#1{%
  \protected\csdef{#1s}{#1\fml@opt@sb}%
  \protected\csdef{#1rm}{\mathrm#1\fml@opt@sb}%
  \protected\csdef{#1bf}{\mathbf#1\fml@opt@sb}%
  \protected\csdef{#1sf}{\mathsfit#1\fml@opt@sb}%
  \protected\csdef{#1bb}{\mathbb#1\fml@opt@sb}%
  \protected\csdef{#1cal}{\mathcal#1\fml@opt@sb}%
  \protected\csdef{#1scr}{\mathscr#1\fml@opt@sb}%
  \protected\csdef{#1frak}{\mathfrak#1\fml@opt@sb}%
}
\forcsvlist\csundef{ss,SS}%
\def\fml@tmp#1=#2{\ifdef#1{}{\let#1#2}}
\def\do#1{\fml@tmp#1}
\def\do#1{%
  \letcs\fml@tmp{#1}%
  \csletcs{#1}{var#1}%
  \cslet{var#1}\fml@tmp}
\def\fml@tmp#1{\ltx@LocToksA\expandafter{\ltx@gobble#1}}
\def\do#1{%
  \expandafter\fml@tmp\expandafter{\string#1}%
  \protected\csdef{\the\ltx@LocToksA s}{\mathit#1\fml@opt@sb}%
  \protected\csdef{\the\ltx@LocToksA rm}{\mathrm#1\fml@opt@sb}%
  \protected\csdef{\the\ltx@LocToksA bf}{\mathbf#1\fml@opt@sb}%
  \protected\csdef{\the\ltx@LocToksA sf}{\mathsfit#1\fml@opt@sb}%
}
\newrobustcmd*\tab{\mathchoice
  {\mskip\thickmuskip}%
  {\mskip\medmuskip}%
  {\mskip\thinmuskip}%
  {\mskip\verythinmuskip}}%
\newrobustcmd*\st[1][\middle]{%
  \nonscript\tab#1\vert\allowbreak\nonscript\tab\mathopen{}}
\let\tikz@ensure@dollar@catcode\relax
\tikzset{%
  loop/.style={%
    every curve to/.append style={every loop},%
    to path={\pgfextra{\let\tikztotarget=\tikztostart}\tikz@to@curve@path},
  },
  every loop/.append style={looseness=6,min distance=1ex},
  loop right/.append style={out=30,in=-30},
  loop above/.append style={out=120,in=60},
  loop left/.append style={out=210,in=150},
  loop below/.append style={out=300,in=240},
}
\newdimen\fmlcd@dim
\tikzset{%
  lax/.default={45}{3ex},
  lax/.style 2 args={%
    draw,thin,double,arrows={-implies},
    to path={%
      \pgfextra{%
        \pgfmathparse{#1}\let\fmlcd@buf\pgfmathresult
        \pgfmathsetlength\fmlcd@dim{(#2)/2}}%
      (barycentric cs:\tikztostart=1,\tikztotarget=1)
      +(\fmlcd@buf:\fmlcd@dim)
      -- +(\fmlcd@buf:-\fmlcd@dim) \tikztonodes},
  },
  oplax/.default={45}{3ex},
  oplax/.style 2 args={lax={#1}{#2},arrows={implies-}},
  bilax/.default={45}{3ex},
  bilax/.style 2 args={lax={#1}{#2},arrows={implies-implies}},
}
\crefname{figure}{Figure}{Figures}
\Crefname{figure}{Figure}{Figures}
\renewcommand\labelenumi{(\roman{enumi})}
\renewcommand\theenumi\labelenumi
\newcommand{\N}{\mathord{\mathbb N}}
\newcommand{\pow}[1]{\mathcal{P}#1}
\newcommand{\suchthat}{\mathrel{\mid\,}}
\newcommand{\isom}{\cong}
\newcommand{\One}{\mathord\mathbf {1}}
\newcommand{\after}{\mathbin{\circ}}
\newcommand{\opcat}[1]{{#1}^{\sf op}}
\newcommand{\tensor}{\mathbin{\otimes}}
\newcommand{\limp}{\mathbin{\multimap}}
\newcommand{\llpar}{\upand}
\newlength{\lenpa}
\newtheorem{theorem}{Theorem}[section] 
\newtheorem{lemma}[theorem]{Lemma}
\newtheorem{proposition}[theorem]{Proposition}
\newtheorem{corollary}[theorem]{Corollary}
\theoremstyle{definition}
\newtheorem{definition}{Definition}
\newtheorem{remark}{Remark}
\makeatletter \renewenvironment{proof}[1][\proofname]
{\par\pushQED{\qed}\normalfont\topsep6\p@\@plus6\p@\relax
\trivlist\item[\hskip\labelsep\bfseries#1\@addpunct{.}]\ignorespaces}
{\popQED\endtrivlist\@endpefalse}
\newcommand{\fctl}[3]{\begin{tikzcd}[ampersand
    replacement=\&, cramped, sep=small]#1\colon #2 \arrow[r] \& #3 \end{tikzcd}}
\newcommand{\fromto}[2]{\begin{tikzcd}[ampersand
    replacement=\&, cramped, sep=small
]#1\arrow[r]\& #2\end{tikzcd}}
\tikzstyle{st2b}=[rectangle, minimum height=6pt, minimum width=-0.1pt,inner sep=0pt,draw]
\newcommand{\smllm}{\mllm}
\newcommand{\ProofLabel}[1]{\LeftLabel{\small{#1}}}
\newcommand{\parr}{\llpar}
\newcommand{\otimesT}{\(\otimes\)}
\newcommand{\parT}{\(\llpar\)}
\newcommand{\exchT}{\textsf{exch}}
\newcommand{\idT}{\textsf{id}}
\newcommand{\axT}{\textsf{ax}}
\newcommand{\cutT}{\textsf{cut}}
\newcommand{\iUpT}{\textsf{i}\(\uparrow\)}
\newcommand{\iDownT}{\textsf{i}\(\downarrow\)}
\newcommand{\aiUpT}{\textsf{ai}\(\uparrow\)}
\newcommand{\aiDownT}{\textsf{ai}\(\downarrow\)}
\newcommand{\switchT}{\textsf{s}}
\newcommand{\sigmaSwitchT}{\(\sigma\textsf{s}\)}
\newcommand{\involT}{\(\equiv_{\textsf{DeMor}}\)}
\newcommand{\tinvolT}{\(\scriptscriptstyle\equiv_{\textsf{\tiny DeMor}}\)}
\newcommand{\sigmaUpT}{\(\sigma\!\!\uparrow\)}
\newcommand{\sigmaDownT}{\(\sigma\!\!\downarrow\)}
\newcommand{\alphaUpT}{\(\alpha\!\!\uparrow\)}
\newcommand{\alphaDownT}{\(\alpha\!\!\downarrow\)}
\newcommand{\otimesL}{\ProofLabel{\otimesT}}
\newcommand{\parL}{\ProofLabel{\parT}}
\newcommand{\exchL}{\ProofLabel{\exchT}}
\newcommand{\idL}{\ProofLabel{\idT}}
\newcommand{\axL}{\ProofLabel{\axT}}
\newcommand{\cutL}{\ProofLabel{\cutT}}
\newcommand{\iUpL}{\ProofLabel{\iUpT}}
\newcommand{\iDownL}{\ProofLabel{\iDownT}}
\newcommand{\aiUpL}{\ProofLabel{\aiUpT}}
\newcommand{\aiDownL}{\ProofLabel{\aiDownT}}
\newcommand{\switchL}{\ProofLabel{\switchT}}
\newcommand{\sigmaSwitchL}{\ProofLabel{\sigmaSwitchT}}
\newcommand{\involL}{\ProofLabel{\involT}}
\newcommand{\demorL}{\involL{}}
\newcommand{\sigmaUpL}{\ProofLabel{\sigmaUpT}}
\newcommand{\sigmaDownL}{\ProofLabel{\sigmaDownT}}
\newcommand{\alphaUpL}{\ProofLabel{\alphaUpT}}
\newcommand{\alphaDownL}{\ProofLabel{\alphaDownT}}
\newcommand{\totimesL}{\ProofLabel{\tiny\otimesT}}
\newcommand{\tparL}{\ProofLabel{\tiny\parT}}
\newcommand{\texchL}{\ProofLabel{\tiny\exchT}}
\newcommand{\tidL}{\ProofLabel{\tiny\idT}}
\newcommand{\taxL}{\ProofLabel{\tiny\axT}}
\newcommand{\tcutL}{\ProofLabel{\tiny\cutT}}
\newcommand{\taiUpL}{\ProofLabel{\tiny\aiUpT}}
\newcommand{\taiDownL}{\ProofLabel{\tiny\aiDownT}}
\newcommand{\tsigmaSwitchL}{\ProofLabel{\tiny\sigmaSwitchT}}
\newcommand{\tinvolL}{\ProofLabel{\tinvolT}}
\newcommand{\tdemorL}{\tinvolL{}}
\newcommand{\tsigmaUpL}{\ProofLabel{\tiny\sigmaUpT}}
\newcommand{\tsigmaDownL}{\ProofLabel{\tiny\sigmaDownT}}
\newcommand{\tsigmaalphaL}{\ProofLabel{$\scriptscriptstyle\alpha ,\sigma$}}
\newcommand{\fDown}[1]{f^{\downarrow}_{#1}}
\newcommand{\fDownT}[1]{\(\fDown{#1}\)}
\newcommand{\fiDown}[1]{f^{i\downarrow}_{#1}}
\newcommand{\fiDownT}[1]{\(\fiDown{#1}\)}
\newcommand{\fUp}[1]{f^{\uparrow}_{#1}}
\newcommand{\fUpT}[1]{\(\fUp{#1}\)}
\newcommand{\fiUp}[1]{f^{i\uparrow}_{#1}}
\newcommand{\fiUpT}[1]{\(\fiUp{#1}\)}
\newcommand{\fSwitch}[1]{f^{s}_{#1}}
\newcommand{\fSwitchT}[1]{\(\fSwitch{#1}\)}
\newcommand{\fSigmaSwitch}[1]{f^{\sigma s}_{#1}}
\newcommand{\fSigmaSwitchT}[1]{\(\fSigmaSwitch{#1}\)}
\newcommand{\fSigmaUp}[1]{f^{\sigma\!\uparrow}_{#1}}
\newcommand{\fSigmaDown}[1]{f^{\sigma\!\downarrow}_{#1}}
\newcommand{\fAlphaUp}[1]{f^{\alpha\!\uparrow}_{#1}}
\newcommand{\fAlphaDown}[1]{f^{\alpha\!\downarrow}_{#1}}
\newcommand{\fSigmaUpT}[1]{\(\fSigmaUp{#1}\)}
\newcommand{\fSigmaDownT}[1]{\(\fSigmaDown{#1}\)}
\newcommand{\fAlphaUpT}[1]{\(\fAlphaUp{#1}\)}
\newcommand{\fAlphaDownT}[1]{\(\fAlphaDown{#1}\)}
\newcommand{\fAlphaPar}[1]{\fAlphaUp{#1}}
\newcommand{\fSigmaParT}[1]{\fSigmaUpT{#1}}
\newcommand{\fSigmaTensorT}[1]{\fSigmaDownT{#1}}
\newcommand{\fAlphaParT}[1]{\fAlphaUpT{#1}}
\newcommand{\fAlphaTensorT}[1]{\fAlphaDownT{#1}}
\newcommand{\rhoTensor}[1]{\rho^{\tensor}_{#1}}
\newcommand{\rhoTensorT}[1]{\(\rhoTensor{#1}\)}
\newcommand{\lambdaPar}[1]{\lambda^{\parr}_{#1}}
\newcommand{\fExch}[1]{f^{\textsf{exch}}_{#1}}
\newcommand{\fExchT}[1]{\(\fExch{#1}\)}
\newcommand{\fPar}[1]{f^{\parr}_{#1}}
\newcommand{\fParT}[1]{\(\fPar{#1}\)}
\newcommand{\fTensor}[1]{f^{\tensor}_{#1}}
\newcommand{\fTensorT}[1]{\(\fTensor{#1}\)}
\newcommand{\fCut}[1]{f^{\textsf{cut}}_{#1}}
\newcommand{\fCutT}[1]{\(\fCut{#1}\)}
\newcommand{\axD}{
  \begin{prooftree}
    \AxiomC{}
    \axL\UnaryInfC{\(\ttile \dual{\alpha}, \alpha\)}
  \end{prooftree}
}
\newcommand{\idD}{
  \begin{prooftree}
    \AxiomC{}
    \idL\UnaryInfC{\(\ttile \dual{A}, A\)}
  \end{prooftree}
}
\newcommand{\exchD}{
  \begin{prooftree}
    \AxiomC{\(\ttile \Gamma, A, B, \Delta\)}
    \exchL\UnaryInfC{\(\ttile \Gamma, B, A, \Delta\)}
  \end{prooftree}
}
\newcommand{\parD}{
  \begin{prooftree}
    \AxiomC{\(\ttile \Gamma, A, B, \Delta\)}
    \parL\UnaryInfC{\(\ttile \Gamma, A \parr B, \Delta\)}
  \end{prooftree}
}
\newcommand{\otimesD}{
  \begin{prooftree}
    \AxiomC{\(\ttile \Gamma, A\)} \AxiomC{\(\ttile B, \Delta\)}
    \otimesL\BinaryInfC{\(\ttile \Gamma, A \otimes B, \Delta\)}
  \end{prooftree}
}
\newcommand{\cutD}{
  \begin{prooftree}
    \AxiomC{\(\ttile \Gamma, A\)} \AxiomC{\(\ttile \dual{A}, \Delta\)}
    \cutL\BinaryInfC{\(\ttile \Gamma, \Delta\)}
  \end{prooftree}
}
\newcommand{\aiDownAxiomD}{
  \begin{prooftree}
    \AxiomC{}
    \aiDownL\UnaryInfC{\(\dual{\alpha} \parr \alpha\)}
  \end{prooftree}
}
\newcommand{\aiDownD}{
  \begin{prooftree}
    \AxiomC{\(S\{B\}\)}
    \aiDownL\UnaryInfC{\(S\{B \otimes (\dual{\alpha} \parr \alpha)\}\)}
  \end{prooftree}
}
\newcommand{\iDownAxiomD}{
  \begin{prooftree}
    \AxiomC{}
    \iDownL\UnaryInfC{\(\dual{A} \parr A\)}
  \end{prooftree}
}
\newcommand{\iDownD}{
  \begin{prooftree}
    \AxiomC{\(S\{B\}\)}
    \iDownL\UnaryInfC{\(S\{B \otimes (\dual{A} \parr A)\}\)}
  \end{prooftree}
}
\newcommand{\iUpAxiomD}{
  \begin{prooftree}
    \AxiomC{\(A \otimes \dual{A}\)}
    \iUpL\UnaryInfC{\phantom{\(\{\}\)}}
  \end{prooftree}
}
\newcommand{\iUpD}{
  \begin{prooftree}
    \AxiomC{\(S\{(A \otimes \dual{A}) \parr B\}\)}
    \iUpL\UnaryInfC{\(S\{B\}\)}
  \end{prooftree}
}
\newcommand{\switchD}{
  \begin{prooftree}
    \AxiomC{\(S\{A \otimes (B \parr C)\}\)}
    \switchL\UnaryInfC{\(S\{(A \otimes B) \parr C\}\)}
  \end{prooftree}
}
\newcommand{\sigmaUpD}{
  \begin{prooftree}
   \AxiomC{\(S\{A\parr B\}\)}
   \sigmaUpL\UnaryInfC{\(S\{B\parr A\}\)}
  \end{prooftree}
}
\newcommand{\sigmaDownD}{
  \begin{prooftree}
   \AxiomC{\(S\{A\tensor B\}\)}
   \sigmaDownL\UnaryInfC{\(S\{B\tensor A\}\)}
  \end{prooftree}
}
\newcommand{\alphaUpD}{
  \begin{prooftree}
   \AxiomC{\(S\{A\parr (B\parr C)\}\)}
   \alphaUpL\UnaryInfC{\(S\{(A\parr B)\parr C\}\)}
  \end{prooftree}
}
\newcommand{\alphaDownD}{
  \begin{prooftree}
   \AxiomC{\(S\{(A\tensor B)\tensor C\}\)}
   \alphaDownL\UnaryInfC{\(S\{A\tensor (B\tensor C)\}\)}
  \end{prooftree}
}
\newcommand{\DIAxioms}[2][1]{
  {
    \begin{tikzpicture}[baseline={(0,0)}]
      \draw (-0.05,0) -- node[left]{#2}            (-0.05,#1);
      \draw ( 0.05,0) -- node[right]{\phantom{#2}} (0.05,#1);
      \draw (-0.2,#1) --                           (0.2,#1);
    \end{tikzpicture}
  }
}
\newcommand{\DIRules}[2][1]{
  {
    \begin{tikzpicture}[baseline={(0,0)}]
      \draw (-0.05,0) -- node[left]{#2}            (-0.05,#1);
      \draw ( 0.05,0) -- node[right]{\phantom{#2}} (0.05,#1);
    \end{tikzpicture}
  }
}
\newcommand{\arbitraryDIAxioms}[2][1]{\noLine \AxiomC{\DIAxioms[#1]{#2}} \noLine}
\newcommand{\arbitraryDIRules}[2][1]{\noLine \UnaryInfC{\DIRules[#1]{#2}} \noLine}
\newcommand{\SCRules}[2][1]{
  \begin{tikzpicture}[baseline={(0,0)}]
    \draw
         (-1*#1, {0.08*#1*sin(0) + #1})
      -- (0,0)
      -- (1*#1, {0.08*#1*sin(6 r) + #1});
    \draw[domain=-1:1] plot (\x*#1, {0.08*#1*sin(3*(\x+1) r) + #1});
    \node[draw=none] at (0, {(0.08*#1*sin(3 r) + #1)/2 + 0.05}) {#2};
  \end{tikzpicture}
}
\newcommand{\arbitrarySCAxioms}[2][1]{
  \noLine
  \AxiomC{\SCRules[#1]{#2}}
  \noLine
}
\newcommand{\arbitrarySCRules}[2][1]{
  \noLine
  \UnaryInfC{\SCRules[#1]{#2}}
  \noLine
}
\newcommand{\dual}[1]{\overline{#1}}
\newcommand{\ttile}{\vdash}
\newcommand{\coheq}[1]{{\overset{\lower.8ex\hbox{$\scriptstyle\frown$}}{\lower.2ex\hbox{$\scriptstyle\smile$}}}_{#1}}
\newcommand{\ncoheq}[1]{{\overset{\lower.9ex\hbox{$\scriptstyle\smile$}}{\lower.2ex\hbox{$\scriptstyle\frown$}}}_{#1}}
\newcommand{\deriv}[1]{\Pi_{#1}}
\newenvironment{bprooftree}
  {\leavevmode\hbox\bgroup}
  {\DisplayProof\egroup}
\newcommand{\permutation}{permutation}
\newcommand{\permutations}{permutations}
\newcommand{\proofax}[1]{#1\{\text{\axT{}}\}}
\newcommand{\proofid}[1]{#1\{\text{\idT{}}\}}
\newcommand{\proofcut}[1]{#1\{\text{\cutT{}}\}}
\newcommand{\proofswitch}[1]{#1\{\text{\switchT{}}\}}
\newcommand{\proofotimes}[1]{#1\{\text{\otimesT{}}\}}
\newcommand{\proofaiDown}[1]{#1\{\text{\aiDownT{}}\}}
\newcommand{\proofiUp}[1]{#1\{\text{\iUpT{}}\}}
\newcommand{\proofiDown}[1]{#1\{\text{\iDownT{}}\}}
\newcommand{\proofsize}[1]{|#1|}
\newcommand{\transcut}{\mathop{T^{\mathalpha{SC}}_{\mathalpha{SC}\setminus\proofcut{}}}}
\newcommand{\dontbreak}[1]{\vbox{#1}}
\newcommand{\mllm}[1]{\mathalpha\textbf{MLL}^-_{\mathalpha #1}}
\newenvironment{floatframe}[1]{\begin{mdframed}[style=FFrame,align=center,userdefinedwidth=#1]}{\end{mdframed}}
\newcommand{\num}{\operatorname*{n}}
\newcommand{\ntp}{\num\nolimits^+_{\tensor}}
\newcommand{\ntm}{\num\nolimits^-_{\tensor}}
\newcommand{\npp}{\num\nolimits^+_{\parr}}
\newcommand{\npm}{\num\nolimits^-_{\parr}}
\newcommand{\nc}{\num\nolimits_{\mbox{\tiny{co}}}}
\newcommand{\np}{\num\nolimits_{p}}
\newcommand{\nt}{\num\nolimits_{t}}
\newcommand{\orth}[1]{{#1}^{\circ}}
\newcommand{\dorth}[1]{{#1}^{\circ\circ}}
\newcommand{\cohcat}{\mathord{\mathsf{Coh}}}
\newcommand{\concat}{\mathord{\mathsf{Con}}}
\newcommand{\prodor}{\mathbin{\,|\,}}
\newcommand{\lb}{\mathopen{\lBrack}}
\newcommand{\rb}{\mathclose{\rBrack}}
\newcommand{\intp}[1]{\lb #1\rb}
\newcommand{\transdisc}{\mathop{T^{\mathalpha{DI}}_{\mathalpha{SC}}}}
\newcommand{\transdiscalt}{\mathop{T'^{\mathalpha{DI}}_{\mathalpha{SC}}}}
\newcommand{\transscdi}{\mathop{T^{\mathalpha{SC}}_{\mathalpha{DI}}}}
\title{Modelling Multiplicative Linear Logic via Deep Inference}
\author{Tomer Galor\thanks{\href{mailto:publications@work.galor.email}{publications@work.galor.email}}
  \qquad\qquad Andrea Schalk\thanks{	
        \href{mailto:andrea.schalk@manchester.ac.uk}{andrea.schalk@manchester.ac.uk}}\\
	School of Computer Science\\
		University of Manchester\\
		Manchester, UK\\
              }
\begin{document}
\maketitle

\begin{abstract} 
Multiplicative linear logic is a very well studied formal system, and
most such studies are concerned with the one-sided sequent
calculus. In this paper we look in detail at
existing translations between a deep inference system and the standard sequent calculus one,
provide a simplified translation, and provide a formal
proof that a standard approach to modelling is indeed invariant to all
these translations. En route we establish a necessary condition for
provable sequents related to the number of pars and tensors in a formula
that seems to be missing from the literature.
\end{abstract}
\section{Introduction}\label{intro}

Linear logic is a constructive logic created by Girard \cite{girard}
that has gained increased interest since its inception. It has shown
potential applications from theorem provers and optimising and
compiling concurrent programming languages as a result of the way in
which linear logic is able to encode state transitions \cite{girard,
  taste_of_ll}, to providing an embedding for classical logic and a
categorical interpretation for proof equivalences. Our main focus,
being a body of work that arose from a master's dissertation
\cite{tomers_msc}, is aiming to shed a different light on the area by
using \emph{deep inference}\/ systems \cite{deepinf} as an alternative
mode of description. One of our motivations was to explore to what
extent the different system affects the formulation of the properties
required of a model, and we use Girard's \emph{coherence
  spaces}\/~\cite{girard,lazy} to make the comparison. 

Deep inference systems for multiplicative linear logic share a close
relationship to the work done by Retor{\'e} on pomset logic \cite{retore2021}
which influenced the work on deep inference \cite{guglielmi_deep_inference,retore2021}.
Indeed, Retor{\'e} gave an account of coherence semantics applied to
the deep pomset system \cite{retore2021} and showed equivalence to the
SBV system given by Guglielmi and Stra{\ss}burger \cite{mell_calculus_structures}.
Our work is quite different from this body of work however because our
primary focus is to explore how coherence and categorical semantics hold
invariance by transformation procedures between the sequent calculus and
deep inference systems.

The Equivalence of Proofs is a subcategory of Proof Theory that asks
the question of how one is able to determine if two derivations are
equivalent, or semantically ``equal''. Multiplicative linear logic, $\mllm{}$,
(unlike more powerful systems) has a well-established answer to this
question: At least for the sequent calculus formulation we have a
notion of \emph{proof net}\/~\cite{proof_nets,correctness_criterion},
which identifies two derivations provided they only differ trivially
(at least in some sense) regarding the order in which the inference
rules have been applied.

A proof net boils down a derivation to the following ingredients:
\begin{itemize}
\item The syntactic structure of the formula or sequent at issue; and
  \item a bijection between positive and negative occurrences of
    atoms which preserves names which captures which pairs were
    created by instances of the only axiom of the system, known as
    the \emph{axiom links}.
\end{itemize}

Based on these ideas Stra{\ss}burger~\cite{proof_nets} argues that
one may readily transfer this notion to deep inference systems, and
that since the translation process does not make any changes to the
pairs of atoms created together, the notion of proof net stays the
same. However, the translation process is not straightforward,
introduces a large number of cuts and increases the size of an
interpretation significantly, and for that reason we wanted to
develop a formal argument that a standard way of modelling
derivations in coherence spaces is indeed not affected by the
translation process. We additionally provide an alternative to
Stra{\ss}burger's translation which keeps the size of derivations
smaller by making more assiduous choices regarding when to employ the
cut rule.

In our studies of $\mllm{}$ in a deep inference formulation we noted
that there is what seems to be an unpublished necessary condition for
a formula to be provable, and while one may readily establish the
result for the sequent calculus version, it is rather more obvious in
the former. We do not expect the result to be a surprise to those
familiar with the proof theory of $\mllm{}$.

There is a significant body of literature (for example 
~\cite{seelycat_mod,gangoffour,haghverdi,mellies}) concerned with what is
required for a category to provide a model of linear logic. We should
note here that our aim was not to use \emph{dinatural
  transformations}\/~\cite{blutedinat,blutescottdinat,tanthesis,schalksteele}
to model derivations, but instead to pick morphisms dependent on a
valuation for atoms. To obtain a case study it is sufficient to look at one
such model, for which we chose Girard's coherence
space~\cite{girard}---the reader interested in generalisations is
invited to look at our description of the deep inference model which
readily suggests a generalisation to categories with suitable
infrastructure.

We find it more convenient to work with an alternative axiomatization
of coherence spaces, and we introduce the isomorphic category of
\emph{concordance space}\/~below.

We were surprised to be unable to track down a published description
of how to model a derivation using coherence spaces;
Girard's description ~\cite{girard} merely defines the interpretation
of various connectives on objects. The only source we were able to
find is~\cite{lazy}, which has some shortcomings discussed below. 

Hughes observes~\cite{hughesdeepinf} that deep inference systems are
much closer to a categorical setting, and indeed one may think of
such systems as describing various free categories. Indeed, modelling
a sequent calculus system requires significant overhead to deal with
the infrastructure required, and in our case study we find it rather
easier to describe that model. Most studies of sequent calculus
systems of linear logic remove the cut rule, and since there is a
cut elimination result~\cite{pfenningcut_elim} this can be justified.
However, in order to translate from a deep inference system to a
sequent calculus one the cut rule has to be present. From a
categorical point of view the cut rule is ultimately concerned with
composition, but due to the overheads required to deal with the
infrastructure this is not very clear when looking at the categorical
interpretation of a sequent calculus derivation. In contrast the way
deep inference derivations are jointed together \emph{provides}\/ a
composition (up to some mild quotienting), and so it should come as
no surprise that it gives us a syntactic system that is much closer to
the categorical model.

We hope that this paper is helpful to newcomers to linear logic by
aiming to be largely self-contained while also encouraging others to
explore this space further. For this reason we have kept the use of
abstract category theory to a minimum, and have decided instead to
provide a fairly concrete description of our model. One may think of
this as a study into interpretations of proofs without requiring the
need to talk about proof nets. The literature on deep inference
systems for linear logic is scarce, and we hope that this paper
encourages others to further study these.

We aim to give a summary of our starting point in \cref{back}
which includes a description of the systems under consideration,
before establishing some results regarding their properties in
\cref{props}. We then turn to the question of modelling
derivations and prove that this model is preserved by translation
between the two systems.

\section{Background}\label{back}

In the interest of keeping this paper largely self-contained and to aid readability we
give a brief summary of various notions and definitions we use.

\subsection{Derivation systems under consideration}\label{systems}

Linear logic~\cite{girard} is a very well studied formal system, and
much attention has been given to its multiplicative fragment without
units. The standard presentation of linear logic is as a single-sided
sequent calculus from which the cut rule has been eliminated. This
simplifies some considerations but it also obscures other matters. In
some sense the sequent calculus formulation is merely an example of
Gentzen style systems \cite{gentzen1} one might describe here, and it
is chosen here because it is very familiar in the context of linear
logic. The following system is standard for Multiplicative Linear
Logic without units, known as $\mllm{SC}$ hereafter.

\begin{figure}[!htbp] 
\begin{floatframe}{.8\textwidth}
  \centering
  \newcommand{\SUBFIGUREWIDTH}{0.45\linewidth}
  \newcommand{\VERTICALSPACE}{\vspace{2em}}
  \begin{subfigure}[b]{\SUBFIGUREWIDTH}
    \idD{}
  \end{subfigure}
  \VERTICALSPACE
  \begin{subfigure}[b]{\SUBFIGUREWIDTH}
    \exchD{}
  \end{subfigure}
  \begin{subfigure}[b]{\SUBFIGUREWIDTH}
    \parD{}
  \end{subfigure}
  \VERTICALSPACE
  \begin{subfigure}[b]{\SUBFIGUREWIDTH}
    \otimesD{}
  \end{subfigure}
  \begin{subfigure}[b]{\SUBFIGUREWIDTH}
    \cutD{}
  \end{subfigure}
\end{floatframe}
\end{figure}

This system enjoys a cut elimination property and so the system most
widely used in the literature is the above without the cut rule.
Since we wish to compare this inference system with another which
follows quite a different philosophy, we will make a few comments on
the choices made here which are usually taken for granted.

While there are introduction rules for the two binary connectives,
$\tensor$ and $\llpar$, there are no such rules for negation, and the
only way of obtaining a negated formula is the use of the axiom rule.
In order to treat negation as a proper connective in its own right a
two-sided system is needed, and then applying negation moves the
formula in question onto the other side of the turnstile.

As a consequence we view negation as a derived rule. We have positive
and negative occurrences for each atom, and we use the following
\emph{DeMorgan rules}\/ to define negation recursively for more complex
formulae.

\[\dual{A\tensor
    B}=\dual{A}\llpar\dual{B}\qquad\text{and}\qquad\dual{A\llpar
    B}=\dual{A}\tensor\dual{B}.\]

Note that in this system a formula is equal to its double negation.
This is the standard treatment for such systems. One could, in
principle, decide that one should allow negation to appear anywhere
within a well-formed formula.  In that situation one needs to have a
notion of \emph{DeMorgan equivalence}, which is generated by the
above DeMorgan rules. Then one either needs to allow the cut rule to
be applied to pairs of formulae which are negations of each other up
to DeMorgan equivalence, or one needs a `DeMorgan derivation rule'
which allows the replacement of a formula in the sequent by a
DeMorgan equivalent formula. In either of these systems a formula is
equivalent to its double negation.

Our aim is to contrast $\mllm{SC}$ with a very different style of
formal derivation setting, known as \emph{deep inference}\/
\cite{deepinf}, or \emph{the calculus of structures}.
In the former, formulae are derived step-by-step, and
connectives have to be created in the order in which they appear in
the parse tree. In the latter, one may apply an instance of a rule
\emph{at arbitrary depth inside a formula}. Whereas, viewed from
bottom to top, derivations in a sequent calculus spread out in a
tree-like manner, deep inference derivations are linear. There is a
natural connection between deep inference systems and categorical
logic as observed by Hughes~\cite{hughesdeepinf}. Indeed, one may
think of deep inference as providing the syntax required for
describing a free category with some structure (corresponding to the
logical connectives), but describing the identifications required is
non-trivial once one has two connectives that interact with each
other. In this publication we concentrate on looking at more concrete
interpretations. Most deep inference systems studied in the
literature are concerned with classical logic, and not much has been
written about such systems that describe (fragments of) linear logic.
In \cite{local_system_for_ll} the author gives a system for full
linear logic, while in \cite{proof_nets} we find a system
for~$\mllm{}$, and that is the system we use.

We assume that readers will be familiar with derivations in a sequent
calculus, but not necessarily with deep inference systems. A number
of issues are worth commenting on:
\begin{itemize}
\item It is customary in the deep inference community to consider
  formulae \emph{equal}\/ if they are equal up to associativity and
  symmetry. This is a convention we do not adopt since it does not
  match well with the usual treatment of formulae in linear logic,
  and indeed the system given in \cite{proof_nets} does not adopt
  this convention either.
\item Once again we adopt a notion of \textbf{well-formedness for formulae}
  which allows negation on atoms only. Again this is the approach
  chosen in \cite{proof_nets}.
\item In order to indicate how we may change a formula `on the
  inside' we need some kind of convention, and different conventions
  are adopted in the literature. Here we use $S\{B\}$ to indicate a
  formula of which one particular instance of the formula $B$ has
  been identified. The inference rules allow us to produce a formula
  on the next step where that instance of $B$ has been replaced by
  the formula that appears in the braces in the conclusion.
\end{itemize}

We use $\mllm{DI}$ to refer to the following system.\footnote{Note
  that in the presence of a `deep' symmetry it is sufficient to have one
  direction only for associativity rules.}

\begin{figure}[!htbp] 
\begin{floatframe}{.8\textwidth}
  \centering
  \newcommand{\SUBFIGUREWIDTH}{0.48\linewidth}
  \newcommand{\VERTICALSPACE}{\vspace{1em}}
  \begin{subfigure}[b]{\SUBFIGUREWIDTH}
    \iDownAxiomD{}
  \end{subfigure}
  \VERTICALSPACE
  \begin{subfigure}[b]{\SUBFIGUREWIDTH}
    \iDownD{}
  \end{subfigure}
  \begin{subfigure}[b]{\SUBFIGUREWIDTH}
    \iUpAxiomD{}
  \end{subfigure}
  \VERTICALSPACE
  \begin{subfigure}[b]{\SUBFIGUREWIDTH}
    \iUpD{}
  \end{subfigure}
  \begin{subfigure}[b]{\SUBFIGUREWIDTH}
    \sigmaUpD{}
  \end{subfigure}
  \VERTICALSPACE
  \begin{subfigure}[b]{\SUBFIGUREWIDTH}
    \sigmaDownD{}
  \end{subfigure}
  \begin{subfigure}[b]{\SUBFIGUREWIDTH}
    \alphaUpD{}
  \end{subfigure}
  \VERTICALSPACE
  \begin{subfigure}[b]{\SUBFIGUREWIDTH}
    \alphaDownD{}
  \end{subfigure}
  \VERTICALSPACE
  \begin{subfigure}[b]{\SUBFIGUREWIDTH}
    \switchD{}
  \end{subfigure}
\end{floatframe}
\end{figure}

Note that overhead is created by no longer allowing derivations to
have a tree-like structure. It is easy to see that if in $\mllm{SC}$
we may derive a formula of, say, the form $A\tensor B$ then we may
also derive the formula $B\tensor A$ by swapping the two branches
that establish $A$ and $B$ respectively. In the deep inference
system, on the other hand, this requires the use of the rule
rule~\sigmaDownT{}.
It is trivial to see that the switch rule \switchT{} has a corresponding symmetric rule
from ${S\{(A \parr B) \otimes C\}}$ to ${S\{A \parr (B \otimes C)\}}$
that is derivable from the \sigmaUpT{}, \sigmaDownT{} and \switchT{} rules.
We use \sigmaSwitchT{} as notational shorthand for this.

We obtain a result similar to that for the sequent calculus
formulation in that it is possible to replace the two introduction
rules \iDownT{} by the corresponding versions for atoms below.

\begin{figure}[h]
  \newcommand{\SUBFIGUREWIDTH}{0.48\linewidth}
  \begin{floatframe}{.8\textwidth}
    \centering
    \begin{subfigure}[b]{\SUBFIGUREWIDTH}
    \aiDownAxiomD{}
    \end{subfigure}
    \begin{subfigure}[b]{\SUBFIGUREWIDTH}
    \aiDownD{}
    \end{subfigure}
  \end{floatframe}
\end{figure}

The following result is a corollary of Proposition~2.10
in~\cite{local_system_for_ll} when applied to our system for $\mllm{DI}$.

\begin{proposition}
  \label{diatomsonly}
  In \(\mllm{DI}\), a formula is derivable with the following axioms:
  \\
  \begin{minipage}[b]{0.49\textwidth}
    \iDownAxiomD{}
  \end{minipage}
  \begin{minipage}[b]{0.49\textwidth}
    \iDownD{}
  \end{minipage}
  \\
  if and only if it is derivable with the following axioms:
  \\
  \begin{minipage}[b]{0.49\textwidth}
    \aiDownAxiomD{}
  \end{minipage}
  \begin{minipage}[b]{0.49\textwidth}
    \aiDownD{}
  \end{minipage}
  \\
  \begin{proof}
    The proof in the \emph{if}\/ direction is done by trivially only using \(A\) as an atom in
      \iDownT{}.
    The \emph{only if}\/ part is more interesting
      and is an inductive proof on the size of the formula \(A\).
    \begin{enumerate}
      \item
        If \(A\) is an atom \(\alpha\)
        then the derivation is simply the \aiDownT{} rule.
      \item
        If \(A\) is the negation of some formula \(F\) such that \(A = \dual{F}\),
        where we have a proof of \(\dual{F} \parr F\),
        then the formula to derive is: \(F \parr \dual{F}.\)
        Since negation is an involution,
          the sequent can be derived after a simple use of the \sigmaUpT{} rule.
      \item
        If \(A\) is a formula of the form \(B \otimes C\),
          where we have a proof of the formula \(\dual{B} \parr B\) called
          \(\deriv{B}\) and a proof of \(\dual{C} \parr C\)
          of the form:
          \begin{prooftree}
            \AxiomC{}
            \aiDownL\UnaryInfC{\(\dual{c_0} \parr c_0\)}
            \arbitraryDIRules{\(\deriv{C}\)}
            \UnaryInfC{\(\dual{C} \parr C\)}
          \end{prooftree}
          then the formula to derive is \(\dual{(B \otimes C)} \parr (B \otimes
          C) \equiv (\dual{B} \parr \dual{C}) \parr (B \otimes C)\)
            with derivation:
          \begin{prooftree}
            \arbitraryDIAxioms{\(\deriv{B}\)}
            \UnaryInfC{\(\dual{B} \parr B\)}
            \alwaysSingleLine
            \aiDownL\UnaryInfC{\(\dual{B} \parr (B \otimes (\dual{c_0} \parr c_0))\)}
            \arbitraryDIRules{\(\dual{B} \parr (B \otimes \deriv{C})\)}
            \UnaryInfC{\(\dual{B} \parr (B \otimes (\dual{C} \parr C))\)}
            \sigmaUpL\UnaryInfC{\(\dual{B} \parr (B \otimes (C \parr \dual{C}))\)}
            \switchL\UnaryInfC{\(\dual{B} \parr ((B \otimes C) \parr \dual{C})\)}
            \sigmaUpL\UnaryInfC{\(\dual{B} \parr (\dual{C} \parr (B \otimes C))\)}
            \alphaUpL\UnaryInfC{\((\dual{B} \parr \dual{C}) \parr (B \otimes C)\)}
          \end{prooftree}
        \item
          If \(A\) is a formula of the form \(B \parr C\),
          where we have a proof of the formula \(\dual{B} \parr B\) called
          \(\deriv{B}\) and a proof of \(\dual{C} \parr C\)
          of the form:
          \begin{prooftree}
            \AxiomC{}
            \aiDownL\UnaryInfC{\(\dual{c_0} \parr c_0\)}
            \arbitraryDIRules{\(\deriv{C}\)}
            \UnaryInfC{\(\dual{C} \parr C\)}
          \end{prooftree}
          then the formula to derive is \(\dual{(B \parr C)} \parr (B \parr C)
          \equiv (\dual{B} \otimes \dual{C}) \parr (B \parr C)\)
            with derivation:
          \begin{prooftree}
            \arbitraryDIAxioms{\(\deriv{B}\)}
            \UnaryInfC{\(\dual{B} \parr B\)}
            \aiDownL\UnaryInfC{\((\dual{B} \otimes (\dual{c_0} \parr c_0)) \parr B\)}
            \arbitraryDIRules{\((\dual{B} \otimes \deriv{C}) \parr B\)}
            \UnaryInfC{\((\dual{B} \otimes (\dual{C} \parr C)) \parr B\)}
            \alwaysSingleLine
            \switchL\UnaryInfC{\(((\dual{B} \otimes \dual{C}) \parr C) \parr B\)}
            \sigmaUpL\UnaryInfC{\(B \parr ((\dual{B} \otimes \dual{C}) \parr C)\)}
            \sigmaUpL\UnaryInfC{\(B \parr (C \parr (\dual{B} \otimes \dual{C}))\)}
            \alphaUpL\UnaryInfC{\((B \parr C) \parr (\dual{B} \otimes \dual{C})\)}
            \sigmaUpL\UnaryInfC{\((\dual{B} \otimes \dual{C}) \parr (B \parr C)\)}
          \end{prooftree}
    \end{enumerate}
  \end{proof}
\end{proposition}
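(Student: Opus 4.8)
The plan is to prove the two directions separately, with essentially all of the work falling on the \emph{only if}\/ direction. For the \emph{if}\/ direction I would simply observe that the atomic axiom \aiDownT{} and its deep companion are nothing but the instances of the general axioms \iDownT{} in which the introduced formula is required to be an atom; hence any derivation built from the atomic rules already counts as a derivation using the general rules, and there is nothing further to do. The substance of the statement is therefore the converse.

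For the \emph{only if}\/ direction my plan is to show that every instance of the general identity can be simulated using only the atomic identities together with the structural rules \switchT{}, \sigmaUpT{}, \sigmaDownT{}, \alphaUpT{} and \alphaDownT{}. Since each use of \iDownT{} merely introduces a subformula of the shape \(\dual{A}\parr A\) inside an arbitrary context, it suffices to establish, by induction on the parse tree of \(A\), that \(\dual{A}\parr A\) is derivable from the atomic axioms alone (and this argument goes through verbatim when the whole thing sits inside a surrounding context). In the base case \(A\) is an atom and \(\dual{A}\parr A\) is produced directly by \aiDownT{}. When \(A=\dual{F}\) is a negation, involutivity of \(\dual{(-)}\) gives \(\dual{A}=F\), so the target is \(F\parr\dual{F}\); the induction hypothesis supplies a derivation of \(\dual{F}\parr F\), and a single \sigmaUpT{} reorders the par to finish.

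The two binary cases carry the real work. For \(A=B\tensor C\), De Morgan duality rewrites the target as \((\dual{B}\parr\dual{C})\parr(B\tensor C)\). Rather than deriving \(\dual{B}\parr B\) and \(\dual{C}\parr C\) independently and then trying to merge them, which is awkward in a linear, tree-free format, my plan is to graft the two inductive derivations into one chain: start from the derivation \(\deriv{B}\) of \(\dual{B}\parr B\), use \aiDownT{} deep inside to seed a fresh atomic identity in the \(B\)-slot, grow that seed into \(\dual{C}\parr C\) by replaying \(\deriv{C}\) inside the context \(\dual{B}\parr(B\tensor(-))\), and then reshape: a \sigmaUpT{} exposes \(C\parr\dual{C}\), the switch rule \switchT{} pulls \(\dual{C}\) out of the tensor to give \(\dual{B}\parr((B\tensor C)\parr\dual{C})\), and two further structural steps (\sigmaUpT{} and \alphaUpT{}) bring the formula into the exact target shape. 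The case \(A=B\parr C\) is handled dually, with the roles of the two halves and the direction of the switch mirrored.

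The main obstacle I anticipate is precisely this reshaping in the binary cases. Because we deliberately do \emph{not}\/ treat formulae as equal up to associativity and symmetry, every bracket movement must be realised by an explicit structural rule, so the parenthesisation has to line up perfectly for the conclusion to match the target \emph{syntactically}. The one genuinely non-clerical step is the use of \switchT{}, which is what lets the freshly introduced \(\dual{C}\) (respectively \(\dual{B}\)) escape from inside the tensor; without it the two inductive sub-results could not be recombined while keeping the identity atomic. One could alternatively obtain the statement by specialising Proposition~2.10 of~\cite{local_system_for_ll} to our rule set, but the direct induction above is more transparent for \(\mllm{DI}\).
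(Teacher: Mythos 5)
Your proposal is correct and follows essentially the same route as the paper's proof: the same trivial \emph{if}\/ direction, the same induction on the structure of \(A\) with the atom and negation cases handled identically, and in the binary cases the same key device of seeding a fresh atomic identity with a deep \aiDownT{} inside the context \(\dual{B}\parr(B\otimes(-))\) (respectively its par-side mirror), replaying \(\deriv{C}\) there, and then reshaping with \sigmaUpT{}, \switchT{} and \alphaUpT{} to reach the target bracketing. The reshaping sequence you describe for the tensor case matches the paper's derivation step for step, and your observation that the grafting argument lifts verbatim into an arbitrary surrounding context is exactly what makes the simulation of the deep rule \iDownT{} (not just the axiom) go through.
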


It is well known (for example, see~\cite{proof_nets}) that $\mllm{SC}$
has the same derivable sequents as the related system whose axiom
rule only allows the creation of a pair consisting of an atom and its
negation. We explicitly give a version of this proof as we will refer
back to it below.

\begin{proposition}
  \label{scatomsonly}
  In \(\mllm{SC}\), a formula is derivable with the following axiom:
  \idD{}
  if and only if it is derivable with the following axiom:
  \axD{}
\begin{proof}
  The proof for the \emph{if}\/ direction is done by trivially only using \(A\) as an atom in
    \idT{}.
  The \emph{only if}\/ part is more interesting
    and is an inductive proof on the size of the formula \(A\).
  \begin{enumerate}
    \item
      If \(A\) is an atom \(\alpha\),
      then the derivation is simply the \axT{} rule.
    \item
      If \(A\) is the negation of some formula \(F\) such that \(A = \dual{F}\),
      where we have a proof of \(\ttile \dual{F}, F\),
      then the sequent to derive is \(\ttile \dual{\dual{F}}, \dual{F}\).
      Since \(\dual{\dual{F}} = F\),
        the sequent can be derived after a simple use of the \exchT{} rule.
    \item
      If \(A\) is a formula of the form \(B \otimes C\),
        where we have proofs of \(\ttile \dual{B}, B\) and \(\ttile \dual{C}, C\)
        called \(\deriv{B}\) and \(\deriv{C}\) respectively.
      Then the sequent to derive is \(\ttile \dual{(B \otimes C)}, B \otimes C\)
        with derivation:
      \begin{prooftree}
        \arbitrarySCAxioms{\(\deriv{B}\)}
        \UnaryInfC{\(\ttile \dual{B}, B\)}
        \arbitrarySCAxioms{\(\deriv{C}\)}
        \noLine\UnaryInfC{\(\ttile \dual{C}, C\)}
        \exchL\UnaryInfC{\(\ttile C, \dual{C}\)}
        \otimesL\BinaryInfC{\(\ttile \dual{B}, B \otimes C, \dual{C}\)}
        \exchL\UnaryInfC{\(\ttile \dual{B}, \dual{C}, B \otimes C\)}
        \parL\UnaryInfC{\(\ttile \dual{B} \parr \dual{C}, B \otimes C\)}
      \end{prooftree}
    \item
      If \(A\) is a formula of the form \(B \parr C\),
        where we have proofs of \(\ttile \dual{B}, B\) and \(\ttile \dual{C}, C\)
        called \(\deriv{B}\) and \(\deriv{C}\) respectively.
      Then the sequent to derive is \(\ttile \dual{(B \parr C)}, B \parr C\)
        with derivation:
      \begin{prooftree}
        \arbitrarySCAxioms{\(\deriv{B}\)}
        \UnaryInfC{\(\ttile \dual{B}, B\)}
        \exchL\UnaryInfC{\(\ttile B, \dual{B}\)}
        \arbitrarySCAxioms{\(\deriv{C}\)}
        \UnaryInfC{\(\ttile \dual{C}, C\)}
        \otimesL\BinaryInfC{\(\ttile B, \dual{B} \otimes \dual{C}, C\)}
        \exchL\UnaryInfC{\(\ttile \dual{B} \otimes \dual{C}, B, C\)}
        \parL\UnaryInfC{\(\ttile \dual{B} \otimes \dual{C}, B \parr C\)}
      \end{prooftree}
  \end{enumerate}
\end{proof}
\end{proposition}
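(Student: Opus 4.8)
The plan is to establish the two directions separately, treating the \emph{if} direction as immediate and the \emph{only if} direction by induction on the structure of the formula $A$; the argument parallels the one already given for the deep inference system in \cref{diatomsonly}. For the \emph{if} direction, observe that every instance of the atomic axiom $\vdash \dual{\alpha}, \alpha$ is literally a special case of the general axiom $\vdash \dual{A}, A$ (namely the case where the schematic formula is an atom), so a derivation in the atomic system is already a derivation in the general one and there is nothing to do.

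For the \emph{only if} direction, I would show that each use of the general axiom $\vdash \dual{A}, A$ can be replaced by a derivation of the very same sequent built only from atomic axioms together with the exchange, $\parr$ and $\otimes$ rules, and then splice these replacements into the given derivation. The replacement is produced by induction on the size of $A$, using that a well-formed formula is either an atom, a negated atom, a tensor, or a par. If $A$ is an atom $\alpha$, then $\vdash \dual{\alpha}, \alpha$ is exactly an instance of the atomic axiom. If $A = \dual{F}$ is a negated atom, then since negation is involutive we have $\dual{A} = F$, so the target is $\vdash F, \dual{F}$, obtained from the atomic axiom $\vdash \dual{F}, F$ by a single exchange. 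If $A = B \otimes C$, then DeMorgan gives $\dual{A} = \dual{B} \parr \dual{C}$; I would take the inductively supplied derivations of $\vdash \dual{B}, B$ and $\vdash \dual{C}, C$, exchange the second to $\vdash C, \dual{C}$, apply $\otimes$ to reach $\vdash \dual{B}, B \otimes C, \dual{C}$, reorder by exchange to $\vdash \dual{B}, \dual{C}, B \otimes C$, and finish with a $\parr$ step. The case $A = B \parr C$ is dual: here $\dual{A} = \dual{B} \otimes \dual{C}$, so I would exchange the $B$-derivation instead and conclude analogously.

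The one point needing care, and the only real source of friction, is the bookkeeping forced by the sequent-calculus $\otimes$ rule, which splits a sequent and merges two contexts: the two formulae fed to the tensor must be made adjacent to it, and the surviving atoms must afterwards be shuffled back together before the $\parr$ rule becomes applicable. Managing these exchanges correctly is routine but unavoidable, and it is precisely the overhead that the deep inference formulation of \cref{diatomsonly} avoids by rewriting inside a single formula. Beyond this reshuffling there is no genuine obstacle: the involutivity of negation and the DeMorgan dualities $\dual{B \otimes C} = \dual{B} \parr \dual{C}$ and $\dual{B \parr C} = \dual{B} \otimes \dual{C}$ carry the whole argument.
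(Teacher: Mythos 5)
Your proposal is correct and takes essentially the same route as the paper's own proof: the \emph{if} direction is the trivial observation that each atomic axiom is an instance of \idT{}, and the \emph{only if} direction is the same structural induction with identical derivations in each case (exchange applied to the \(C\)-branch, then \otimesT{}, exchange, \parT{} for the tensor case, and the mirror-image manipulation for the par case). The only cosmetic difference is that you phrase the negation case as the negated-atom case directly, while the paper phrases it via involutivity of negation and an appeal to the inductive hypothesis; the content is the same.
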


We compare the two systems in \cref{connect}.

\subsection{Models of linear logic}\label{llmodels}

Models of linear logic are well studied, and even in the original
publications,  such as \cite{girard}, two models were
proposed. One based on \emph{phase spaces}\/ where the interpretation
takes place inside the powerset of some set, with the usual partial
order, and one based on \emph{coherence spaces}, sets equipped with a
binary `coherence relation'. The latter is more interesting as a
categorical model since it is non-degenerate as a category.

In the background research for the dissertation that the present work
is founded on we were surprised to discover that it
is quite difficult, however, to find an account of how exactly
derivations are interpreted in this latter model. We were unable to find
an account in a journal, and the only record that provides
significant detail that we were able to track down is the
hand-written~\cite{lazy}. This account does cover full classical
logic, and spends significant time on stable and linear functions,
which is not relevant to our investigation. The interpretation of
derivations as cliques is given informally which is sufficient to
look at results of interest to that account, but not for the purposes
of comparing models of different derivation systems. For this reason
we give a detailed account of this model in
\cref{sequentmodel}.

\subsubsection*{Coherence spaces}

We give a brief overview of coherence spaces as they appear
in~\cite{girard}, indicating how to model the connectives, but we
add here the generally accepted notion of morphism to obtain a
category. 

A \textbf{coherence space} is given by a set together with a
symmetric reflexive binary relation, written as $\coheq{}$, the
\emph{coherence relation}. 

The dual $\dual{(R,\coheq{})}$ of a coherence space $(R,\coheq{})$ is
given by the set $R$ together with the reflexive closure of the
complement of $\coheq{}$, and the result is typically written as
$\ncoheq{}$. We use `incoherent' (with respect to the original space)
when referring to this relation, but note that incoherent might mean
equal.

A \textbf{morphism of coherence spaces} from some $(R,\coheq{})$ to
some $(S,\coheq{})$ is given by a relation $F$ from $R$ to $S$ such
that if preserves the coherence relation going forward, and going in
the opposite direction it preserves incoherence:
\begin{itemize}
\item If $r$ and $r'$ are coherent and $r\mathrel{F} s$ as well as
  $r'\mathrel{F}s'$ then $s$ and $s'$ must be coherent; and
\item If $s$ and $s'$ are incoherent, and if $r\mathrel{F}s$ as well
  as $r'\mathrel{F}s'$ then $r$ and $r'$ must be incoherent.
\end{itemize}

The various connectives of multiplicative linear logic have
ready-made interpretations, and we reuse the corresponding symbols.
The tensor product of two coherence spaces $(R,\coheq{})$ and
$(S,\coheq{})$ has the Cartesian product $R\times S$ as its
underlying set and two pairs are coherent if they are coherent
component-wise. The par of two coherence spaces is given via the
DeMorgan rules from above: one forms the dual of the two spaces,
forms their tensor product, and takes the dual of the result. As a
consequence coherence spaces satisfy the DeMorgan rules from above as
equalities, and the double dual of a coherence space is the space
itself.

The category $\cohcat$ of coherence space is $*$-autonomous, with
unit $I$ consisting of the set $\{*\}$ with the only possible
coherence relation serving as a unit for tensor.

A \textbf{clique} in a coherence space is given by a subset of the
underlying set such that any two elements of this subset are
coherent. An \textbf{anticlique} is a clique for the dual space.
This corresponds to a clique in a graph and indeed one can construct a reflexive
graph with elements of a set as vertices and the coherence relation as a set of
edges.
In \cref{sequentmodel} we describe how to use cliques to model
derivations in the sequent calculus system.
Note that we consider the empty set
both a clique and an anticlique.

In \cref{concord} we provide an alternative description of coherence
spaces.

\subsubsection*{Identity of proofs}

It is generally accepted that two derivations in $\mllm{SC}$ starting
with no assumptions should be considered equivalent if they derive
the same sequent, and if they create the `same' pairs of atoms via the
axioms, where we have to take care to refer to instances of atoms in
the final sequent. This relation may be characterised syntactically
in the sense that one derivation may be transformed into any
equivalent one by applying a sequence of \emph{commuting conversions}\/
which allow certain instances of derivation rules to be moved past
each other.

We can see some of these ideas at work in the proofs below that show our
interpretations of sequent calculus and deep inference derivations are the same
under various transformations.

\subsection{Notation}

In what follows, most of the notation we use should be
self-explanatory or following standard conventions. For example, we
employ lower case letters for atoms, upper case for arbitrary
formulae and capital Greek letters for sequents. Note that we use a
line over a formula to indicate its negation, and that our logical
systems do not treat negation as a logical connective in its own
right as explained above.

We use the name of an object to also denote its identity morphism in
some category.

\section[Properties of MLL]{Properties of \texorpdfstring{$\mllm{}$}
  {MLL}}\label{props}

We start our exploration of the two systems introduced above, by
first of all establishing a necessary property for derivable sequents
or formulae. This seems to have been overlooked in the literature,
although we do not expect it to be a surprise to anybody familiar with
$\mllm{}$. Admittedly, it is a property much easier to spot in the
deep inference system since the commas between sequents act as a
distraction.

\subsection{A necessary condition for derivable formulae/sequents}
\normalsize

When manipulating formulae in either system one frequently writes
expressions which have to be rewritten according to the DeMorgan
rules to become well-formed formulae. The result we wish to establish
is a necessary condition for derivable formulae or sequents. It works
even for expressions which are not themselves well-formed formulae
and so we establish it at this level of generality.

For the purposes of this section we consider all formulae that may be
created using $\tensor$, $\llpar$ and negation, and we consider two
such formula equivalent if one can be rewritten into the other using
the DeMorgan laws.

A formula whose only negated parts are atoms (that is a well-formed
formula in our system) is considered the \emph{normal form}\/ of such
an expression.

\begin{definition}\label{posnegocc}
  We say that a connective in an $\mllm{}$ formula \textbf{occurs
    positively}\/ if the number of negation overlines above it is
  even; else we say the connective \textbf{occurs negatively}.
\end{definition}

We have reason to count the positive and negative occurrences of the
two connectives in a formula. Given a formula $A$ we use
$\ntp A$ for the number of positive occurrences of $\tensor$
in $A$, and we extend that notation in the expected way.

\begin{proposition}\label{equivnos}
  If two formulae $A$ and $B$ are equivalent then we have the
  following:
  \begin{align*}
\ntp A +\npm A &= \ntp B + \npm B\\
\ntm A  +\npp A &= \ntm B +\npp B.
  \end{align*}
\end{proposition}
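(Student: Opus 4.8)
The plan is to reduce the statement to a single elementary rewrite and then induct. Since the equivalence relation is generated by the two DeMorgan laws applied as a congruence anywhere inside an expression, two expressions $A$ and $B$ are equivalent exactly when there is a finite chain of one-step rewrites connecting them. It therefore suffices to show that a single application of a DeMorgan rule---in either direction, and at an arbitrary depth---leaves both of the quantities $\ntp{} + \npm{}$ and $\ntm{} + \npp{}$ unchanged; the result then follows by induction on the length of the chain, the base case $A = B$ being trivial.

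For the inductive step I would analyse one rewrite, say $\dual{P \tensor Q} \to \dual{P} \llpar \dual{Q}$ sitting inside some surrounding context, and write $k$ for the number of negation overlines of that context lying above the matched negation. The reverse direction is the inverse of this move, and the other DeMorgan law is entirely symmetric, so this one case is representative. The key bookkeeping observation is that every connective occurring strictly inside $P$ or $Q$ keeps its parity: on both sides of the rewrite such a connective has exactly $k+1$ overlines above it coming from outside $P$ or $Q$ (the context contributes $k$, and one further overline is supplied either by the outer negation of $\dual{P \tensor Q}$ or by the negation of $\dual{P}$ or $\dual{Q}$). Hence none of the four counts changes for the inner connectives, and only the rewritten top connective can contribute to a change.

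It then remains to track that single top connective. Before the move it is a $\tensor$ with $k+1$ overlines above it; afterwards it is a $\llpar$ with $k$ overlines above it, so both its type and the parity of its overline-count flip simultaneously. Splitting on the parity of $k$: if $k$ is even the occurrence passes from a negative $\tensor$ (counted by $\ntm{}$) to a positive $\llpar$ (counted by $\npp{}$), so $\ntm{} + \npp{}$ is unchanged while $\ntp{} + \npm{}$ is untouched; if $k$ is odd it passes from a positive $\tensor$ (counted by $\ntp{}$) to a negative $\llpar$ (counted by $\npm{}$), so now $\ntp{} + \npm{}$ is unchanged while $\ntm{} + \npp{}$ is untouched. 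In every case both sums are preserved, which completes the step and hence the proof.

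I expect the one genuinely delicate point to be precisely this parity accounting: one has to notice that the single overline absorbed or released by a DeMorgan step flips the connective's type ($\tensor \leftrightarrow \llpar$) and the parity of its occurrence at the same time, which is exactly why a positive $\tensor$ must be grouped with a negative $\llpar$, and a negative $\tensor$ with a positive $\llpar$, for the counts to be invariant. (Should one additionally admit double-negation rewrites $\dual{\dual{X}} \leftrightarrow X$, these change every overline count by $2$ and so preserve all parities, hence all four counts individually; they present no difficulty.)
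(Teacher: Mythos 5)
Your proof is correct and takes essentially the same route as the paper's: both reduce the claim to checking that a single DeMorgan rewrite, applied anywhere inside an expression, leaves the two sums $\ntp{}+\npm{}$ and $\ntm{}+\npp{}$ unchanged, and then conclude by induction on the rewrite chain. The only difference is one of care rather than method --- the paper's two-sentence argument states only the surface case (a negative $\tensor$ turning into a positive $\llpar$, and its symmetric variants), while you make explicit the parity bookkeeping for a rewrite sitting under $k$ context overlines, which is precisely the point the paper leaves implicit.
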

\begin{proof}
  We can see that applying the DeMorgan rule on two formulae
  connected by a tensor means we decrease $\ntm$ by one and increase
  $\npp$ by one while the other two numbers stay the same. Similar
  rules apply to using this rule on the negation of two formulae connected by a
  par, or using the equivalent rules with pars and tensors swapped.
\end{proof}

\begin{proposition}\label{dualnos}
  Given a formula $A$ and its negation $\dual{A}$ we have that
  \begin{align*}
    \ntp A&=\ntm \dual{A} &\npp A&=\npm \dual{A}\\
    \ntm A&=\ntp \dual{A} &\npm A&=\npp \dual{A},
  \end{align*}
  and so 
  \begin{align*}
    \ntp A + \npm A &= \ntm\dual{A} +\npp\dual{A}\\
    \ntm A + \npp A &=\ntp\dual{A} +\npm\dual{A}.
  \end{align*}
\end{proposition}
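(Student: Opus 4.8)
The plan is to read off both families of equalities directly from the parity-based definition of positive and negative occurrences (\cref{posnegocc}), without any DeMorgan manipulation. The single observation driving everything is that $\dual{A}$ is the expression $\overline{A}$, obtained by placing one negation overline over the whole of $A$. Consequently every connective occurrence in $A$ carries exactly one more overline above it when regarded as an occurrence in $\dual{A}$, and since adding a single overline flips the parity of the overline count, each connective that occurs positively in $A$ occurs negatively in $\dual{A}$ and vice versa.

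From this parity flip the four equalities fall out at once. A positively occurring $\tensor$ in $A$ is exactly a negatively occurring $\tensor$ in $\dual{A}$, giving $\ntp A = \ntm \dual{A}$; the same reasoning applied to the remaining three cases yields $\npp A = \npm \dual{A}$, $\ntm A = \ntp \dual{A}$ and $\npm A = \npp \dual{A}$. The two summation identities are then immediate: adding $\ntp A = \ntm \dual{A}$ to $\npm A = \npp \dual{A}$ gives $\ntp A + \npm A = \ntm \dual{A} + \npp \dual{A}$, and adding the other two gives $\ntm A + \npp A = \ntp \dual{A} + \npm \dual{A}$.

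I expect no real obstacle, since the result is a bookkeeping consequence of the convention in \cref{posnegocc}. The only point worth guarding against is the reader who insists that $\dual{A}$ be recorded in normal form, with negations pushed onto the atoms; in that case the counts are taken on an expression DeMorgan-equivalent to $\overline{A}$ rather than on $\overline{A}$ itself. To cover this reading I would invoke \cref{equivnos}, which states precisely that the sums $\ntp{} + \npm{}$ and $\ntm{} + \npp{}$ are preserved under DeMorgan equivalence, so that the two displayed summation identities hold regardless of which equivalent form of $\dual{A}$ one works with.
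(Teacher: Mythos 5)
Your proof is correct and takes essentially the same approach as the paper: the paper's entire argument is that forming $\dual{A}$ adds one overline over every connective in $A$, so positive occurrences become negative and vice versa, which is exactly your parity-flip observation. Your additional remark about reading $\dual{A}$ in normal form and invoking \cref{equivnos} is a sensible (if optional) safeguard beyond what the paper states.
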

\begin{proof}
  Forming the negation of $A$ adds one line over every connective in
  $A$, so positive ones become negative and vice versa.
\end{proof}

Surprisingly, we were unable to find \cref{derivnos,derivnosseq} in the
literature. %

\begin{theorem}\label{derivnos}
  If $A$ is a formula derivable  in $\mllm{DI}$ then
\[(\ntm A+ \npp A) - (\ntp A+\npm A)  = 1.\]
\end{theorem}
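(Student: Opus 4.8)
The plan is to rephrase the target quantity as a single signed count over connective occurrences and then show it is an invariant of the derivation rules. Write $\Phi(A) = (\ntm A + \npp A) - (\ntp A + \npm A)$ for the left-hand side. Equivalently, assign to each connective occurrence in $A$ the weight $+1$ if it is a positively occurring $\parr$ or a negatively occurring $\tensor$, and the weight $-1$ if it is a positively occurring $\tensor$ or a negatively occurring $\parr$; then $\Phi(A)$ is the sum of these weights. Two structural facts drive the argument. First, by \cref{dualnos} every connective in $\dual{A}$ has the opposite polarity to the corresponding one in $A$, so $\Phi(\dual{A}) = -\Phi(A)$. Second, $\Phi$ is additive over a context: if $S\{\,\}$ is a formula context whose hole sits under $p$ negation overlines, then the connectives outside the hole contribute a fixed amount while those inside a subformula $X$ have their polarities uniformly shifted by $p$, so that $\Phi(S\{X\}) = \Phi(S\{\,\}) + (-1)^{p}\,\Phi(X)$. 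The immediate consequence I will record is that whenever $\Phi(X) = \Phi(Y)$ we get $\Phi(S\{X\}) = \Phi(S\{Y\})$, \emph{regardless of the parity of the hole}.

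With this in hand I would argue by induction on the structure of a derivation of $A$ in $\mllm{DI}$. For the base case, the only axiom is $\iDownT$ producing $\dual{B} \parr B$ (or, restricting to atoms via \cref{diatomsonly}, $\dual{\alpha} \parr \alpha$); here the top $\parr$ is positive and $\Phi(\dual{B} \parr B) = 1 + \Phi(\dual{B}) + \Phi(B) = 1 - \Phi(B) + \Phi(B) = 1$, so the claim holds. For the inductive step, each rule rewrites a distinguished subformula $X$ into $Y$ inside a context $S\{\,\}$, so by the additivity consequence above it suffices to check $\Phi(X) = \Phi(Y)$ for each rule schema, and then the induction hypothesis $\Phi(S\{X\}) = 1$ transfers to $\Phi(S\{Y\}) = 1$.

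The remaining work is the local computation $\Phi(X) = \Phi(Y)$ for every rule, which is routine. The commutativity and associativity rules \sigmaUpT{}, \sigmaDownT{}, \alphaUpT{}, \alphaDownT{} merely reorder connectives of a fixed polarity, so both sides have the same multiset of weights. For \switchT{}, both $A \tensor (B \parr C)$ and $(A \tensor B) \parr C$ contain exactly one positive $\tensor$ and one positive $\parr$, giving net local weight $0$ on each side. For \iDownT{}, adding a positive $\tensor$ together with the block $\dual{A} \parr A$ contributes $-1 + \Phi(\dual{A}\parr A) = -1 + 1 = 0$, leaving $\Phi$ unchanged; the co-rule \iUpT{} is handled symmetrically using $\Phi(A \tensor \dual{A}) = -1 + \Phi(A) + \Phi(\dual{A}) = -1$, so deleting $(A \tensor \dual{A}) \parr B$ down to $B$ again changes $\Phi$ by $0$. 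I expect the only genuinely delicate point to be the context step rather than these calculations: one must be confident that a local equality $\Phi(X)=\Phi(Y)$ survives being placed under an arbitrary number of negations, which is precisely why the polarity-shift identity $\Phi(S\{X\}) = \Phi(S\{\,\}) + (-1)^{p}\Phi(X)$ is isolated first, so that the sign $(-1)^{p}$ factors out harmlessly.
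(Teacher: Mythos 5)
Your proof is correct and follows essentially the same route as the paper's: an induction over the derivation showing the signed count is preserved by every rule, with the axiom $\dual{A}\parr A$ as base case, the symmetry/associativity/switch rules preserving it trivially, and the cancellation $\Phi(\dual{A})=-\Phi(A)$ (via \cref{dualnos}) handling \iDownT{} and \iUpT{}. Your explicit polarity-shift identity $\Phi(S\{X\}) = \Phi(S\{\,\}) + (-1)^{p}\,\Phi(X)$ merely makes precise what the paper phrases informally, namely that the two new connectives introduced by the $i$-rules ``both occur either positively or negatively,'' so the two arguments are the same in substance.
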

\begin{proof}
  The proof proceeds by induction. Clearly the result holds for a
  formula of the form $\dual{A}\llpar A$. It is trivially preserved
  by the symmetry, associativity and switch rules and also by
  DeMorgan equivalence, see \cref{equivnos}. That leaves the two
  non-axiomatic rules $i$. Considering \iDownT{}, assume that
  $S\{B\}$ satisfies the criterion. Then in
  $S\{B\tensor (\dual{A}\llpar A)\}$ the various counts of
  occurrences of connectives in $A$ balance out those in $\dual{A}$.
  There are two new connectives, $\tensor$ and $\llpar$, and they
  both occur either positively or negatively, so when counting we add
  one each to the number of positive $\tensor$s and positive
  $\llpar$s, or do the same for the number of negative $\tensor$s and
  $\llpar$s. This maintains the property. For \iUpT{} similar
  considerations apply, only here the counts are variously reduced.
\end{proof}

Note in particular that when we have a formula $A$ in normal form
then 
\[\ntm A=0\qquad\text{and}\qquad \npm A=0,\]
so we obtain the following corollary.

\begin{corollary}\label{derivnosnf}
  If a formula $A$ derivable in $\mllm{DI}$ is in normal form then
  its number of occurrence of $\llpar$ is one greater than the number
  of occurrences of~$\tensor$.
\end{corollary}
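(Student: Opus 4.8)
The plan is to obtain \cref{derivnosnf} as an immediate specialisation of \cref{derivnos}, the only extra ingredient being the meaning of \emph{normal form}. First I would recall that a formula $A$ is in normal form exactly when its only negated parts are atoms, so that no occurrence of $\tensor$ or $\parr$ lies beneath any negation overline. By \cref{posnegocc} this says that every connective of $A$ sits under zero, hence an even number of, overlines, so every occurrence of both connectives is positive. Therefore $\ntm A = 0$ and $\npm A = 0$, and $\ntp A$, $\npp A$ now simply count \emph{all} occurrences of $\tensor$ and of $\parr$ in $A$ respectively.

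With these two counts known to vanish, I would substitute into the identity of \cref{derivnos}. Its left-hand side $(\ntm A + \npp A) - (\ntp A + \npm A)$ collapses to $\npp A - \ntp A$, which the theorem sets equal to $1$. Rearranging gives $\npp A = \ntp A + 1$; since $\ntm A$ and $\npm A$ are zero, $\npp A$ and $\ntp A$ are the total numbers of pars and tensors, so this is exactly the assertion that the number of occurrences of $\parr$ is one greater than the number of occurrences of $\tensor$.

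There is no serious obstacle here: all of the work has already been done in \cref{derivnos}, and the corollary merely records the shape that invariant takes once the negative counts are seen to be zero. The single point deserving a line of justification is that normal form forces $\ntm A = \npm A = 0$, and this is immediate from \cref{posnegocc} together with the definition of normal form as an expression whose only overlined subformulae are atoms.
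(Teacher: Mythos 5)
Your proposal is correct and matches the paper's own argument exactly: the paper simply notes that a formula in normal form has $\ntm A = 0$ and $\npm A = 0$, then reads off the corollary from \cref{derivnos}, which is precisely your substitution and rearrangement. Your extra justification via \cref{posnegocc} (all connectives sit under zero overlines, hence occur positively) is the same observation, merely spelled out.
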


We extend our notion of number of negative or positive occurrences of
the connectives to sequents in the obvious way by writing, for
example, $\ntp\Gamma$ for the number of positive
occurrences of $\tensor$ in $\Gamma$, that is, if 
\[\Gamma\text{ is }  A_1,A_2,\ldots,A_n\]
then
\[\ntp\Gamma= \sum_{i=1}^n\ntp A_i.\]

We also need to count the number of commas that occur in $\Gamma$,
using~$\nc\Gamma$. Alternatively
we could use the rule~\parT{}  to 
turn $\Gamma$ into a formula by replacing all the commas
by~$\llpar$s.

This allows us to extend our result for the number of occurrences of
the connectives to the sequent calculus formulation of~$\mllm{}$.

\begin{theorem}\label{derivnosseq}
If $\Gamma$ is a sequent derivable in $\mllm{SC}$ then 
\[(\ntm \Gamma +\npp\Gamma +\nc\Gamma) - (\ntp \Gamma +\npm\Gamma)= 1.\]
\end{theorem}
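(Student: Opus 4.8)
The plan is to induct on the structure of an $\mllm{SC}$ derivation, showing that each rule preserves the value $1$ of the quantity in question. For brevity abbreviate the sequent expression by $\delta(\Gamma) = (\ntm \Gamma +\npp\Gamma +\nc\Gamma) - (\ntp \Gamma +\npm\Gamma)$, and for a single formula $A$ (which carries no commas) write $\delta(A) = (\ntm A +\npp A) - (\ntp A+\npm A)$, so that $\delta$ of a sequent is the sum of $\delta$ over its formulae plus its number of commas. The goal is to prove $\delta(\Gamma)=1$ for every derivable $\Gamma$. For the base case, the axiom \idT{} produces $\ttile \dual{A}, A$, which carries a single comma; rewriting the connective counts of $\dual{A}$ via \cref{dualnos} makes the positive and negative tensor and par counts of $A$ and $\dual{A}$ cancel in pairs, so that $\delta(A)+\delta(\dual{A})=0$ and the axiom's value is $\delta(A)+\delta(\dual{A})+1=1$. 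The atom-only axiom \axT{} is simply the case where $A$ is an atom.

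For the inductive step three of the four remaining rules are routine. The exchange rule \exchT{} only permutes the formulae of a sequent, leaving all five counts, and hence $\delta$, unchanged. The par rule \parT{} turns a comma into a top-level --- and therefore positively occurring --- $\llpar$, which increases $\npp$ by one and decreases $\nc$ by one, two changes that cancel in $\delta$. The tensor rule \otimesT{} concatenates two premises and introduces a single positively occurring $\tensor$; here the comma counts add up exactly, since $\ttile \Gamma, A \otimes B, \Delta$ has precisely as many commas as $\ttile \Gamma, A$ and $\ttile B, \Delta$ together, so summing the two premise equations $\delta=1$ and subtracting one for the new positive tensor gives $1+1-1=1$ for the conclusion.

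The main obstacle is the cut rule \cutT{}, which forces one to combine two premises while using the duality of the cut formula and to track the commas carefully. From premises $\ttile \Gamma, A$ and $\ttile \dual{A}, \Delta$, with $p$ and $q$ commas respectively, adding the two equations $\delta=1$ and cancelling $\delta(A)+\delta(\dual{A})=0$ (again by \cref{dualnos}) yields that the combined formula contributions of $\Gamma$ and $\Delta$ plus $p+q$ equal $2$. The conclusion $\ttile \Gamma, \Delta$ has only $p+q-1$ commas --- one comma is lost when the two sequents are concatenated --- so its value of $\delta$ is $2-1=1$, as required. This single lost comma is exactly what compensates for the fact that here \emph{both} premises, rather than one, contribute a $1$. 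With all rules checked, the induction is complete.

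Finally, I would record the shortcut suggested by the comma/\parT{} correspondence mentioned above: repeatedly applying \parT{} replaces every comma of a derivable $\Gamma$ by a $\llpar$, turning $\Gamma$ into a single derivable formula $A$ whose only additional connectives are $\nc\Gamma$ positively occurring pars, so that $\npp A = \npp\Gamma + \nc\Gamma$ while $\ntp$, $\ntm$ and $\npm$ are unchanged, and hence $\delta(A)=\delta(\Gamma)$. Once the equivalence of derivable formulae in $\mllm{SC}$ and $\mllm{DI}$ (\cref{connect}) is available, \cref{derivnos} applied to $A$ gives $\delta(A)=1$ at once; the direct induction above has the advantage of keeping the argument self-contained at this point in the development.
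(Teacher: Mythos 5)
Your proof is correct and follows essentially the same route as the paper: induction over the derivation, with the axiom handled via the cancellation from \cref{dualnos}, trivial preservation under \exchT{} and \parT{}, a positive-tensor/comma bookkeeping argument for \otimesT{}, and the cut case resolved by the lost comma compensating for the two premises each contributing $1$ (your $\delta$ plays the role of the paper's derived counters $\nt$ and $\np$). Even your closing remark about instead translating to a single formula and invoking \cref{derivnos} mirrors the paper, which notes that option and likewise prefers the direct induction.
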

\begin{proof}
  We could use the translation process from \cref{sctodi} but we
  prefer instead to reason about the sequent calculus
  system~$\mllm{SC}$. This is another proof by induction. Clearly
  the statement holds for the only axiom. The property is trivially
  maintained by the exchange and the $\llpar$ rules. Considering the
  $\tensor$ rules we can see that
  \begin{align*}
\MoveEqLeft[2]{ \ntm(\Gamma, A\tensor B,\Delta) + \npp(\Gamma, A\tensor B,\Delta)
    +\nc(\Gamma, A\tensor B,\Delta)}\\
 &= \ntm(\Gamma,A) +\ntm(B,\Delta) + \npp(\Gamma,A) + \npp(B,\Delta)
                                      +\nc(\Gamma,A) +\nc(B,\Delta)\\
 &=(\ntm(\Gamma,A)+\npp(\Gamma,A) +\nc(\Gamma,A)) + (\ntm(B,\Delta)
   +\npp(B,\Delta) +\nc(B,\Delta))\\
 &=(\ntp(\Gamma,A) +\npm(\Gamma,A) +1) + (\ntp(B,\Delta)
   +\npm(B,\Delta) +1)\\
&= (\ntp(\Gamma,A) +\ntp(B,\Delta) +1) + \npm(\Gamma,A) +
  \npm(B,\Delta) +1\\
&= \ntp(\Gamma, A\tensor B,\Delta) + \npm(\Gamma, A\tensor B,\Delta) +1.
 \end{align*}

It remains to look at the cut rule. We use the following derived
counters:
\begin{align*}
\nt&=\ntp+\npm&&\np=\ntm+\npp+\nc.
\end{align*}
We need to show that
\[\np(\Gamma,\Delta) -\nt(\Gamma,\Delta) =1\]
knowing that
\[\np(\Gamma,A) - \nt(\Gamma,A)=1\]
and
\[\np(\dual{A},\Delta) - \nt(\dual{A},\Delta) =1,\]
as well as, by \cref{dualnos},
\[\np A=\nt\dual{A}\qquad\text{and}\qquad\nt A=\nt\dual{A}.\]
\begin{align*}
\MoveEqLeft[3]{\np(\Gamma,\Delta) -\nt(\Gamma,\Delta)}\\
  &= (\np\Gamma + \np\Delta +1) -(\nt\Gamma + \nt\Delta)\\
  &= (\np\Gamma + \np A + \np\dual{A} + \np\Delta +1) - (\nt\Gamma +\nt
    A  +\nt\dual{A}+\nt\Delta)\\
  &= (\np(\Gamma,A) -1 +\np(\dual{A},\Delta) -1 +1) - (\nt(\Gamma,A)
    +\nt(\dual{A},\Gamma)) \\
  &= (\np(\Gamma,A) -\nt(\Gamma,A))
    +(\np(\dual{A},\Delta)-\nt(\dual{A},\Delta)) -1\\
  &= 1+1-1\\
  &= 1.
\end{align*}
This completes the proof.
\end{proof}

\begin{corollary}\label{derivnosseqnf}
  If $\Gamma$ is a sequent derivable in $\mllm{SC}$ consisting
  entirely of formulae in normal form then the number of $\llpar$s
  added to the number of commas is one greater than the number of
  $\tensor$s occurring in~$\Gamma$.
\end{corollary}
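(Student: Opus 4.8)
The plan is to obtain this immediately as a specialisation of \cref{derivnosseq}. First I would unpack what normal form buys us: since a formula in normal form carries negation overlines only on its atoms, no connective lies beneath any overline, so by \cref{posnegocc} every occurrence of $\tensor$ and of $\llpar$ is positive. Consequently $\ntm A = 0$ and $\npm A = 0$ for each component formula $A$, and summing over the components of the sequent gives $\ntm\Gamma = 0$ and $\npm\Gamma = 0$.

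Next I would substitute these vanishing counts into the identity provided by \cref{derivnosseq}, namely
\[(\ntm\Gamma + \npp\Gamma + \nc\Gamma) - (\ntp\Gamma + \npm\Gamma) = 1,\]
which collapses to $\npp\Gamma + \nc\Gamma = \ntp\Gamma + 1$. Finally, because every connective occurs positively, $\npp\Gamma$ is simply the total number of $\llpar$s appearing in $\Gamma$ and $\ntp\Gamma$ the total number of $\tensor$s, so this last equation is precisely the claimed statement that the number of $\llpar$s added to the number of commas is one greater than the number of $\tensor$s.

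There is no genuine obstacle here: the single point requiring any thought is the observation that normal form forces all connective occurrences to be positive, and once that is recorded the conclusion follows by direct substitution and routine bookkeeping of the counters. This mirrors exactly how \cref{derivnosnf} was deduced from \cref{derivnos} in the single-formula setting, the only difference being the presence of the comma count $\nc\Gamma$ on the par side of the balance.
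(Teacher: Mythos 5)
Your proposal is correct and matches the paper's (implicit) argument exactly: the paper treats this as an immediate corollary of \cref{derivnosseq}, obtained—just as for \cref{derivnosnf}—by observing that normal form forces $\ntm\Gamma = 0$ and $\npm\Gamma = 0$, after which the identity collapses to the stated count. No gaps; the bookkeeping is exactly as the paper intends.
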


\section{Translations between the two systems}\label{connect}

Arguably, we owe a justification for the claim that the two systems
introduced in \cref{systems} implement `the same logic'. We do this
by providing processes of translations of derivations going in both
directions. The reader might want to know from the start that the two
translations do not undo each other's effect, no matter whether we
start in the deep inference or in the sequent calculus system.
Indeed, if we are not careful about how we define these translations
then the size of the derivation increases significantly each time we
apply one of them.

In \cite{proof_nets} Stra{\ss}burger gives a sketch of the proof
that the two systems $\mllm{SC}$ and $\mllm{DI}$ allow us to derive
essentially the same statements. We provide further details here,
as we require them below. We then look at the effects of these
translations, and suggest an alternative to one of them.

We begin by translating from the deep inference system to the sequent
calculus one.%

In what follows we use an inference rule labelled with
\involT\ to indicate that we are replacing some
formula with one equal as per the definition of negation.

We provide a translation $\transdisc$ for derivations from the deep
inference to the sequent calculus system.

\begin{proposition}\label{ditosc}
  If we have the derivation $\deriv{}$ of a formula $A$ in $\mllm{DI}$
  then there is a derivation $\transdisc\deriv{}$ of ${\ttile A}$ in
$\mllm{SC}$.
\end{proposition}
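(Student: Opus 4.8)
The plan is to argue by induction on the length of the deep inference derivation $\deriv{}$, reading it from its axiomatic top line downwards. Since the only premise-free rule of $\mllm{DI}$ is the \iDownT{} axiom, every proof begins with an instance $\dual{A_0}\parr A_0$ at the top; this is the base case, and I translate it to the $\mllm{SC}$ derivation obtained from \idT{} (giving $\ttile \dual{A_0}, A_0$) followed by \parT{}. For the inductive step I assume the shorter derivation $\deriv{}'$ with conclusion $S\{P\}$ has already been translated to an $\mllm{SC}$ proof of $\ttile S\{P\}$, and that $\deriv{}$ extends it by a single rule instance rewriting $S\{P\}$ into $S\{Q\}$ at the bottom. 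The core idea---following Stra{\ss}burger---is to realise each such deep rewrite by a single \cutT{}: once I have a proof of $\ttile \dual{S\{P\}}, S\{Q\}$, cutting it against the proof of $\ttile S\{P\}$ (taking $S\{P\}$ as the cut formula) yields a proof of $\ttile S\{Q\}$, which is the desired conclusion.

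The work therefore reduces to a context lemma: if $\ttile \dual{P}, Q$ is derivable in $\mllm{SC}$, then so is $\ttile \dual{S\{P\}}, S\{Q\}$ for every one-hole context $S\{\cdot\}$. I would prove this by induction on the structure of $S\{\cdot\}$. The empty context is the hypothesis. If $S\{\cdot\}$ is $S'\{\cdot\}\tensor R$ (or $R\tensor S'\{\cdot\}$), then from the inductively obtained $\ttile \dual{S'\{P\}}, S'\{Q\}$ and the axiom $\ttile \dual{R}, R$ I form $\ttile \dual{S'\{P\}}, S'\{Q\}\tensor R, \dual{R}$ by \otimesT{}, then rearrange with \exchT{} and apply \parT{} to obtain $\ttile \dual{S'\{P\}}\parr\dual{R}, S'\{Q\}\tensor R$; since $\dual{S'\{P\}\tensor R}=\dual{S'\{P\}}\parr\dual{R}$ by the DeMorgan rules this is exactly $\ttile \dual{S\{P\}}, S\{Q\}$. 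The par-context case is symmetric, tensoring the two duals together instead. This nested induction is what multiplies the number of cuts and enlarges the derivation, as noted in the introduction.

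It remains to discharge the base instances of the context lemma, namely to exhibit for each non-axiomatic rule $\frac{S\{P\}}{S\{Q\}}$ of $\mllm{DI}$ a cut-free $\mllm{SC}$ derivation of $\ttile \dual{P}, Q$. For \iDownT{} (with $P=B$ and $Q=B\tensor(\dual{A}\parr A)$) I combine $\ttile \dual{B}, B$ with $\ttile \dual{A}\parr A$ by \otimesT{}; for \iUpT{} I dually produce $\ttile (\dual{A}\parr A)\tensor\dual{B}, B$; the symmetry rules \sigmaUpT{}, \sigmaDownT{} and the associativity rules \alphaUpT{}, \alphaDownT{} each amount to the evident rebracketing, provable from \idT{} with \exchT{}, \parT{} and \otimesT{}; and \switchT{} is the standard derivability of $\ttile \dual{A}\parr(\dual{B}\tensor\dual{C}), (A\tensor B)\parr C$. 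Each is a short, routine calculation. Steps that merely rewrite a formula to a DeMorgan-equal one are invisible here, since our well-formed formulae carry negation on atoms only.

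The genuine obstacle is not any single rule but the bookkeeping in the context lemma: one must track that $\dual{S\{P\}}$ equals the context with its hole filled by $\dual{P}$ and all its connectives DeMorgan-dualised, and manage the exchanges needed to line up the active formulae before each \parT{}. Getting this invariant precisely right, and confirming that the single \cutT{} in the inductive step indeed produces $\ttile S\{Q\}$ while leaving the atom pairings untouched, is the part that needs care; everything else is mechanical. This also makes transparent why $\transdisc$ inflates derivations, motivating the leaner alternative given afterwards.
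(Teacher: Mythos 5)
Your proposal is correct and takes essentially the same route as the paper's own proof: a recursive translation in which each deep rule instance $\frac{S\{P\}}{S\{Q\}}$ is handled by (a) a rule-specific cut-free derivation of $\ttile \dual{P}, Q$, (b) a context induction lifting this to $\ttile \dual{S\{P\}}, S\{Q\}$ (your context lemma is exactly the paper's step of appending tensor/par connectives, including the DeMorgan bookkeeping), and (c) a single \cutT{} against the already-translated proof of $\ttile S\{P\}$ to obtain $\ttile S\{Q\}$. The only differences are presentational, so no further comparison is needed.
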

\begin{proof}
We provide a recursive procedure to define the translation.
  
We can describe any rule \(\rho\frac{S\{P\}}{S\{Q\}}\)
  called \(\rho\) as a transformation from some subformula \(P\) in \(S\)
  to \(Q\) in \(S\).
Then we can recursively execute the following three steps
  in order to find a derivation of \(S\{Q\}\).

\begin{enumerate}
  \item
    Find a sequent calculus derivation of \({\ttile \dual{P}, Q}\).
    These are detailed in
    \cref{fig:pq_proof_i,fig:pq_proof_switch,fig:pq_proof_assoc,fig:pq_proof_comm%
}. 

  \item
    The next step is to find a derivation of
      \({\ttile \dual{S\{P\}}, S\{Q\}}\)
      from the derivation of \({\ttile \dual{P}, Q}\).
    Consider the connective and formula that are immediately connected to \(Q\).
    For example, in the formula \[{S = (A \otimes B) \parr (C \parr Q)},\]
      the connective \parT{} and the formula \(C\) are immediately connected to
      \(Q\).
    Likewise, \(C \parr Q\) is then immediately connected to \(A \otimes B.\)
    Then simply use the derivations shown in \cref{fig:pq_to_spsq}
      until the entire context \(S\) is created,
      forming a derivation of \({\ttile \dual{S\{P\}}, S\{Q\}}\).

  \item
    Finally, use the \cutT{} rule to cut the derivation of \({\ttile S\{P\}}\),
      which is obtained by hypothesis,
      with \({\ttile \dual{S\{P\}}, S\{Q\}}\)
      to get a derivation of \({S\{Q\}}\).
\end{enumerate}
\end{proof}

\begin{figure}[!p]\centering
  \newcommand{\VERTICALSPACE}{\vspace{2em}}
  \begin{subfigure}[b]{0.25\linewidth}
    \begin{prooftree}
      \AxiomC{}
      \idL{}\UnaryInfC{\(\ttile{} \dual{ A },A\)}
      \parL{}\UnaryInfC{\(\ttile{} \dual{ A } \parr{} A \)}
    \end{prooftree}
    \caption{\iDownT}
  \end{subfigure}\qquad\qquad\qquad
  \begin{subfigure}[b]{0.35\linewidth}
    \begin{prooftree}
      \AxiomC{}
      \idL{}\UnaryInfC{\(\ttile{} \dual{B}, B\)}
      \AxiomC{}
      \idL{}\UnaryInfC{\(\ttile{} \dual{A}, A\)}
      \parL{}\UnaryInfC{\(\ttile{} \dual{A} \parr A\)}
      \otimesL{}\BinaryInfC{\(\ttile{} \dual{B}, B \otimes ( \dual{A} \parr A )\)}
    \end{prooftree}
    \caption{\iDownT}
  \end{subfigure}
  \VERTICALSPACE\\
  \begin{subfigure}[b]{0.4\linewidth}
    \begin{prooftree}
      \AxiomC{}
      \idL{}\UnaryInfC{\(\ttile{} \dual{A},A\)}
      \parL{}\UnaryInfC{\(\ttile{} \dual{A} \parr{} A\)}
      \AxiomC{}
      \idL{}\UnaryInfC{\(\ttile{} \dual{B},B\)}
      \otimesL{}\BinaryInfC{\(\ttile{} ( \dual{A} \parr{} A ) \otimes
        \dual{B},B\)} 
      \demorL{}\UnaryInfC{\(\ttile{} \dual{ (( A \otimes{} \dual{A} ) \parr B  ) },B\)}
    \end{prooftree}
    \caption{\iUpT}
  \end{subfigure}
  \caption{\label{fig:pq_proof_i}
    Proofs of the sequent \({\ttile \dual{P}, Q}\) for introduction and cut rules
  }
\end{figure}

\begin{figure}[!p]\centering
\scriptsize
  \newcommand{\SUBFIGUREWIDTH}{0.49\linewidth}
  \newcommand{\VERTICALSPACE}{\vspace{2em}}
  \begin{subfigure}[b]{\SUBFIGUREWIDTH}
        \begin{prooftree}
          \AxiomC{}
          \tidL{}\UnaryInfC{\(\ttile{} \dual{A},A\)}
          \texchL{}\UnaryInfC{\(\ttile{} A,\dual{A}\)}
          \AxiomC{}
          \tidL{}\UnaryInfC{\(\ttile{} \dual{B},B\)}
          \AxiomC{}
          \tidL{}\UnaryInfC{\(\ttile{} \dual{C},C\)}
          \texchL{}\UnaryInfC{\(\ttile{} C,\dual{C}\)}
          \totimesL{}\BinaryInfC{\(\ttile{} \dual{B},B \otimes{} C,\dual{C}\)}
          \totimesL{}\BinaryInfC{\(\ttile{} A,\dual{A} \otimes{} \dual{B},B \otimes{} C,\dual{C}\)}
          \texchL{}\UnaryInfC{\(\ttile{} \dual{A} \otimes{} \dual{B},A,B \otimes{} C,\dual{C}\)}
          \tparL{}\UnaryInfC{\(\ttile{} \dual{A} \otimes{}
            \dual{B},A \parr{} ( B \otimes{} C ) ,\dual{C}\)} 
          \texchL{}\UnaryInfC{\(\ttile{} \dual{A} \otimes{}
            \dual{B},\dual{C},A \parr{}  ( B \otimes{} C ) \)} 
          \tparL{}\UnaryInfC{\(\ttile{}  ( \dual{A} \otimes{} \dual{B} )  \parr{} \dual{C},A \parr{}  ( B \otimes{} C ) \)}
          \tdemorL{}\UnaryInfC{\(\ttile{} \dual{(( A \parr{} B )
            \otimes{} C ) },A \parr{}  ( B \otimes{} C ) \)} 
        \end{prooftree}
    \caption{Switch Left to Right}
  \end{subfigure}
  \begin{subfigure}[b]{\SUBFIGUREWIDTH}
        \begin{prooftree}
          \AxiomC{}
          \tidL{}\UnaryInfC{\(\ttile{} \dual{A},A\)}
          \AxiomC{}
          \tidL{}\UnaryInfC{\(\ttile{} \dual{B},B\)}
          \texchL{}\UnaryInfC{\(\ttile{} B,\dual{B}\)}
          \totimesL{}\BinaryInfC{\(\ttile{} \dual{A},A \otimes{} B,\dual{B}\)}
          \AxiomC{}
          \tidL{}\UnaryInfC{\(\ttile{} \dual{C},C\)}
          \totimesL{}\BinaryInfC{\(\ttile{} \dual{A},A \otimes{} B,\dual{B} \otimes{} \dual{C},C\)}
          \texchL{}\UnaryInfC{\(\ttile{} \dual{A},\dual{B} \otimes{} \dual{C},A \otimes{} B,C\)}
          \tparL{}\UnaryInfC{\(\ttile{} \dual{A} \parr{}  ( \dual{B} \otimes{} \dual{C} ) ,A \otimes{} B,C\)}
          \tparL{}\UnaryInfC{\(\ttile{} \dual{A} \parr{}  ( \dual{B} \otimes{} \dual{C} ) , ( A \otimes{} B )  \parr{} C\)}
          \tdemorL{}\UnaryInfC{\(\ttile{} \dual{ ( A \otimes{}  ( B \parr{} C )  ) }, ( A \otimes{} B )  \parr{} C\)}
        \end{prooftree}
    \caption{Switch Right to Left}
  \end{subfigure}
  \caption{\label{fig:pq_proof_switch}
    Proofs of the sequent \({\ttile \dual{P}, Q}\) for switch
  }
\end{figure}

 \begin{figure}[!p]\centering
\scriptsize
   \newcommand{\SUBFIGUREWIDTH}{0.49\linewidth}
   \newcommand{\VERTICALSPACE}{\vspace{2em}}
   \begin{subfigure}[b]{\SUBFIGUREWIDTH}
         \begin{prooftree}
           \AxiomC{}
           \idL{}\UnaryInfC{\(\ttile{} \dual{A},A\)}
           \AxiomC{}
           \idL{}\UnaryInfC{\(\ttile{} \dual{B},B\)}
           \AxiomC{}
           \idL{}\UnaryInfC{\(\ttile{} \dual{C},C\)}
           \exchL{}\UnaryInfC{\(\ttile{} C,\dual{C}\)}
           \otimesL{}\BinaryInfC{\(\ttile{} \dual{B},B \otimes{} C,\dual{C}\)}
           \exchL{}\UnaryInfC{\(\ttile{} B \otimes{} C,\dual{B},\dual{C}\)}
           \otimesL{}\BinaryInfC{\(\ttile{} \dual{A},A \otimes{}  ( B \otimes{} C ) ,\dual{B},\dual{C}\)}
           \exchL{}\UnaryInfC{\(\ttile{} \dual{A},\dual{B},A \otimes{}  ( B \otimes{} C ) ,\dual{C}\)}
           \exchL{}\UnaryInfC{\(\ttile{} \dual{A},\dual{B},\dual{C},A \otimes{}  ( B \otimes{} C ) \)}
           \parL{}\UnaryInfC{\(\ttile{} \dual{A} \parr{} \dual{B},\dual{C},A \otimes{}  ( B \otimes{} C ) \)}
           \parL{}\UnaryInfC{\(\ttile{}  ( \dual{A} \parr{} \dual{B} )  \parr{} \dual{C},A \otimes{}  ( B \otimes{} C ) \)}
           \demorL{}\UnaryInfC{\(\ttile{} \dual{ (  ( A \otimes{} B )  \otimes{} C ) },A \otimes{}  ( B \otimes{} C ) \)}
         \end{prooftree}
     \caption{\alphaDownT}
   \end{subfigure}
   \begin{subfigure}[b]{\SUBFIGUREWIDTH}
         \begin{prooftree}
           \AxiomC{}
           \idL{}\UnaryInfC{\(\ttile{} \dual{A},A\)}
           \exchL{}\UnaryInfC{\(\ttile{} A,\dual{A}\)}
           \AxiomC{}
           \idL{}\UnaryInfC{\(\ttile{} \dual{B},B\)}
           \exchL{}\UnaryInfC{\(\ttile{} B,\dual{B}\)}
           \AxiomC{}
           \idL{}\UnaryInfC{\(\ttile{} \dual{C},C\)}
           \otimesL{}\BinaryInfC{\(\ttile{} B,\dual{B} \otimes{} \dual{C},C\)}
           \exchL{}\UnaryInfC{\(\ttile{} \dual{B} \otimes{} \dual{C},B,C\)}
           \otimesL{}\BinaryInfC{\(\ttile{} A,\dual{A} \otimes{}  ( \dual{B} \otimes{} \dual{C} ) ,B,C\)}
           \exchL{}\UnaryInfC{\(\ttile{} \dual{A} \otimes{}  ( \dual{B} \otimes{} \dual{C} ) ,A,B,C\)}
           \parL{}\UnaryInfC{\(\ttile{} \dual{A} \otimes{}  ( \dual{B} \otimes{} \dual{C} ) ,A \parr{} B,C\)}
           \parL{}\UnaryInfC{\(\ttile{} \dual{A} \otimes{}  ( \dual{B} \otimes{} \dual{C} ) , ( A \parr{} B )  \parr{} C\)}
           \demorL{}\UnaryInfC{\(\ttile{} \dual{A} \otimes{} \dual{ ( B \parr{} C ) }, ( A \parr{} B )  \parr{} C\)}
           \demorL{}\UnaryInfC{\(\ttile{} \dual{ ( A \parr{}  ( B \parr{} C )  ) }, ( A \parr{} B )  \parr{} C\)}
         \end{prooftree}
     \caption{\alphaUpT}
   \end{subfigure}
   \caption{\label{fig:pq_proof_assoc}
     Proofs of the sequent \({\ttile \dual{P}, Q}\) for associativity
   }
 \end{figure}

 \begin{figure}[!p]\centering
   \newcommand{\ADJUSTBOXWIDTH}{0.42\linewidth}
   \newcommand{\SUBFIGUREWIDTH}{0.49\linewidth}
   \newcommand{\VERTICALSPACE}{\vspace{2em}}
   \begin{subfigure}[b]{\SUBFIGUREWIDTH}
     \begin{prooftree}
       \AxiomC{}
       \idL{}\UnaryInfC{\(\ttile{} \dual{A},A\)}
       \exchL{}\UnaryInfC{\(\ttile{} A,\dual{A}\)}
       \AxiomC{}
       \idL{}\UnaryInfC{\(\ttile{} \dual{B},B\)}
       \otimesL{}\BinaryInfC{\(\ttile{} A,\dual{A} \otimes{} \dual{B},B\)}
       \exchL{}\UnaryInfC{\(\ttile{} \dual{A} \otimes{} \dual{B},A,B\)}
       \exchL{}\UnaryInfC{\(\ttile{} \dual{A} \otimes{} \dual{B},B,A\)}
       \parL{}\UnaryInfC{\(\ttile{} \dual{A} \otimes{} \dual{B},B \parr{} A\)}
       \demorL{}\UnaryInfC{\(\ttile{} \dual{ ( A \parr{} B ) },B \parr{} A\)}
     \end{prooftree}
     \caption{\sigmaUpT}
   \end{subfigure}
   \begin{subfigure}[b]{\SUBFIGUREWIDTH}
     \begin{prooftree}
       \AxiomC{}
       \idL{}\UnaryInfC{\(\ttile{} \dual{B},B\)}
       \AxiomC{}
       \idL{}\UnaryInfC{\(\ttile{} \dual{A},A\)}
       \exchL{}\UnaryInfC{\(\ttile{} A,\dual{A}\)}
       \otimesL{}\BinaryInfC{\(\ttile{} \dual{B},B \otimes{} A,\dual{A}\)}
       \exchL{}\UnaryInfC{\(\ttile{} \dual{B},\dual{A},B \otimes{} A\)}
       \exchL{}\UnaryInfC{\(\ttile{} \dual{A},\dual{B},B \otimes{} A\)}
       \parL{}\UnaryInfC{\(\ttile{} \dual{A} \parr{} \dual{B},B \otimes{} A\)}
       \demorL{}\UnaryInfC{\(\ttile{} \dual{ ( A \otimes{} B ) },B \otimes{} A\)}
     \end{prooftree}
     \caption{\sigmaDownT}
   \end{subfigure}
   \caption{\label{fig:pq_proof_comm}
     Proofs of the sequent \({\ttile \dual{P}, Q}\) for commutativity
   }
 \end{figure}

\begin{figure}[!p]\centering
  \newcommand{\SUBFIGUREWIDTH}{0.45\linewidth}
  \newcommand{\VERTICALSPACE}{\vspace{2em}}
  \begin{subfigure}[b]{\SUBFIGUREWIDTH}
    \begin{prooftree}
      \AxiomC{}
      \idL{}\UnaryInfC{\(\ttile{} \dual{B},B\)}
      \arbitrarySCAxioms{\(\deriv{}\)}
      \UnaryInfC{\(\ttile{} \dual{P},Q\)}
      \exchL{}\UnaryInfC{\(\ttile{} Q,\dual{P}\)}
      \otimesL{}\BinaryInfC{\(\ttile{} \dual{B},B \otimes{} Q,\dual{P}\)}
      \exchL{}\UnaryInfC{\(\ttile{} \dual{B},\dual{P},B \otimes{} Q\)}
      \parL{}\UnaryInfC{\(\ttile{} \dual{B} \parr{} \dual{P},B \otimes{} Q\)}
      \demorL{}\UnaryInfC{\(\ttile{} \dual{ ( B \otimes{} P ) },B \otimes{} Q\)}
    \end{prooftree}
    \caption{Tensor appended to left}
  \end{subfigure}\qquad
  \begin{subfigure}[b]{\SUBFIGUREWIDTH}
    \begin{prooftree}
      \arbitrarySCAxioms{\(\deriv{}\)}
      \UnaryInfC{\(\ttile{} \dual{P},Q\)}
      \AxiomC{}
      \idL{}\UnaryInfC{\(\ttile{} \dual{B},B\)}
      \exchL{}\UnaryInfC{\(\ttile{} B,\dual{B}\)}
      \otimesL{}\BinaryInfC{\(\ttile{} \dual{P},Q \otimes{} B,\dual{B}\)}
      \exchL{}\UnaryInfC{\(\ttile{} \dual{P},\dual{B},Q \otimes{} B\)}
      \parL{}\UnaryInfC{\(\ttile{} \dual{P} \parr{} \dual{B},Q \otimes{} B\)}
      \demorL{}\UnaryInfC{\(\ttile{} \dual{ ( P \otimes{} B ) },Q \otimes{} B\)}
    \end{prooftree}
    \caption{Tensor appended to right}
  \end{subfigure}
  \VERTICALSPACE\\
  \begin{subfigure}[b]{\SUBFIGUREWIDTH}
    \begin{prooftree}
      \AxiomC{}
      \idL{}\UnaryInfC{\(\ttile{} \dual{B},B\)}
      \exchL{}\UnaryInfC{\(\ttile{} B,\dual{B}\)}
      \arbitrarySCAxioms{\(\deriv{}\)}
      \UnaryInfC{\(\ttile{} \dual{P},Q\)}
      \otimesL{}\BinaryInfC{\(\ttile{} B,\dual{B} \otimes{} \dual{P},Q\)}
      \exchL{}\UnaryInfC{\(\ttile{} \dual{B} \otimes{} \dual{P},B,Q\)}
      \parL{}\UnaryInfC{\(\ttile{} \dual{B} \otimes{} \dual{P},B \parr{} Q\)}
      \demorL{}\UnaryInfC{\(\ttile{} \dual{ ( B \parr{} P ) },B \parr{} Q\)}
    \end{prooftree}
    \caption{Par appended to left}
  \end{subfigure}\qquad
  \begin{subfigure}[b]{\SUBFIGUREWIDTH}
    \begin{prooftree}
      \arbitrarySCAxioms{\(\deriv{}\)}
      \UnaryInfC{\(\ttile{} \dual{P},Q\)}
      \exchL{}\UnaryInfC{\(\ttile{} Q,\dual{P}\)}
      \AxiomC{}
      \idL{}\UnaryInfC{\(\ttile{} \dual{B},B\)}
      \otimesL{}\BinaryInfC{\(\ttile{} Q,\dual{P} \otimes{} \dual{B},B\)}
      \exchL{}\UnaryInfC{\(\ttile{} \dual{P} \otimes{} \dual{B},Q,B\)}
      \parL{}\UnaryInfC{\(\ttile{} \dual{P} \otimes{} \dual{B},Q \parr{} B\)}
      \demorL{}\UnaryInfC{\(\ttile{} \dual{ ( P \parr{} B ) },Q \parr{} B\)}
    \end{prooftree}
    \caption{Par appended to right}
  \end{subfigure}
  \caption[Derivations of \(\ttile \dual{P}, Q\) into \(\ttile \dual{S\{P\}, S\{Q\}}\)]
    {\label{fig:pq_to_spsq}
    Derivations showing how to append a formula \(B\)
    with pars and tensors
    to the left and right of the formulae
    in the sequent \({\ttile \dual{P}, Q}\),
    obtained by a derivation \(\deriv{}\).
  }
\end{figure}
It is important to note that changing a derivation using the
transformation described here does \emph{not}\/ result in a derivation that
has a similar size. In fact, the size of the resulting sequent
calculus derivation grows rapidly with each new inference rule added to
the deep inference derivation. For example, the sequent calculus derivation
corresponding to the derivation in \cref{fig:disample} is given in \cref{fig:di2sc}.

\begin{figure}[!hp]
  \centering
\begin{prooftree}
\AxiomC{}
\aiDownL{}\UnaryInfC{\( \dual{b} \parr{} b \)}
\aiDownL{}\UnaryInfC{\( ( \dual{b} \otimes{}  ( \dual{a} \parr{} a )  )  \parr{} b \)}
\sigmaDownL{}\UnaryInfC{\( (  ( \dual{a} \parr{} a )  \otimes{} \dual{b} )  \parr{} b \)}
\sigmaSwitchL{}\UnaryInfC{\( ( \dual{a} \parr{}  ( a \otimes{} \dual{b} )  )  \parr{} b \)}
\end{prooftree}
  \caption{Sample deep inference derivation}
  \label{fig:disample}
\end{figure}
Notice that the left premise at each \cutT{} rule has the same
formula as a corresponding deep inference rule. Meanwhile, the right-hand
premises use many more rules to establish the next corresponding
rule's formula and the negation of the previous formula.

\begin{sidewaysfigure}[!hp]
\makebox[\textheight]{%
\noindent\begin{minipage}{\textheight}
\noindent\tiny
\begin{prooftree}
\AxiomC{}
\tidL{}\UnaryInfC{\(\ttile{} \dual{b},b\)}
\tparL{}\UnaryInfC{\(\ttile{} \dual{b} \parr{} b\)}
\AxiomC{}
\tidL{}\UnaryInfC{\(\ttile{} b,\dual{b}\)}
\AxiomC{}
\tidL{}\UnaryInfC{\(\ttile{} \dual{a},a\)}
\tparL{}\UnaryInfC{\(\ttile{} \dual{a} \parr{} a\)}
\totimesL{}\BinaryInfC{\(\ttile{} b,\dual{b} \otimes{}  ( \dual{a} \parr{} a ) \)}
\texchL{}\UnaryInfC{\(\ttile{} \dual{b} \otimes{}  ( \dual{a} \parr{} a ) ,b\)}
\AxiomC{}
\tidL{}\UnaryInfC{\(\ttile{} \dual{b},b\)}
\totimesL{}\BinaryInfC{\(\ttile{} \dual{b} \otimes{}  ( \dual{a} \parr{} a ) ,b \otimes{} \dual{b},b\)}
\texchL{}\UnaryInfC{\(\ttile{} b \otimes{} \dual{b},\dual{b} \otimes{}  ( \dual{a} \parr{} a ) ,b\)}
\tparL{}\UnaryInfC{\(\ttile{} b \otimes{} \dual{b}, ( \dual{b} \otimes{}  ( \dual{a} \parr{} a )  )  \parr{} b\)}
\tdemorL{}\UnaryInfC{\(\ttile{} \dual{ ( \dual{b} \parr{} b ) }, ( \dual{b} \otimes{}  ( \dual{a} \parr{} a )  )  \parr{} b\)}
\tcutL{}\BinaryInfC{\(\ttile{}  ( \dual{b} \otimes{}  ( \dual{a} \parr{} a )  )  \parr{} b\)}
\AxiomC{}
\tidL{}\UnaryInfC{\(\ttile{} \dual{ ( \dual{a} \parr{} a ) },\dual{a} \parr{} a\)}
\AxiomC{}
\tidL{}\UnaryInfC{\(\ttile{} \dual{b},b\)}
\totimesL{}\BinaryInfC{\(\ttile{} \dual{ ( \dual{a} \parr{} a ) }, ( \dual{a} \parr{} a )  \otimes{} \dual{b},b\)}
\texchL{}\UnaryInfC{\(\ttile{} \dual{ ( \dual{a} \parr{} a ) },b, ( \dual{a} \parr{} a )  \otimes{} \dual{b}\)}
\texchL{}\UnaryInfC{\(\ttile{} b,\dual{ ( \dual{a} \parr{} a ) }, ( \dual{a} \parr{} a )  \otimes{} \dual{b}\)}
\tparL{}\UnaryInfC{\(\ttile{} b \parr{} \dual{ ( \dual{a} \parr{} a ) }, ( \dual{a} \parr{} a )  \otimes{} \dual{b}\)}
\tdemorL{}\UnaryInfC{\(\ttile{} \dual{ ( \dual{b} \otimes{}  ( \dual{a} \parr{} a )  ) }, ( \dual{a} \parr{} a )  \otimes{} \dual{b}\)}
\texchL{}\UnaryInfC{\(\ttile{}  ( \dual{a} \parr{} a )  \otimes{} \dual{b},\dual{ ( \dual{b} \otimes{}  ( \dual{a} \parr{} a )  ) }\)}
\AxiomC{}
\tidL{}\UnaryInfC{\(\ttile{} \dual{b},b\)}
\totimesL{}\BinaryInfC{\(\ttile{}  ( \dual{a} \parr{} a )  \otimes{} \dual{b},\dual{ ( \dual{b} \otimes{}  ( \dual{a} \parr{} a )  ) } \otimes{} \dual{b},b\)}
\texchL{}\UnaryInfC{\(\ttile{} \dual{ ( \dual{b} \otimes{}  ( \dual{a} \parr{} a )  ) } \otimes{} \dual{b}, ( \dual{a} \parr{} a )  \otimes{} \dual{b},b\)}
\tparL{}\UnaryInfC{\(\ttile{} \dual{ ( \dual{b} \otimes{}  ( \dual{a} \parr{} a )  ) } \otimes{} \dual{b}, (  ( \dual{a} \parr{} a )  \otimes{} \dual{b} )  \parr{} b\)}
\tdemorL{}\UnaryInfC{\(\ttile{} \dual{ (  ( \dual{b} \otimes{}  ( \dual{a} \parr{} a )  )  \parr{} b ) }, (  ( \dual{a} \parr{} a )  \otimes{} \dual{b} )  \parr{} b\)}
\tcutL{}\BinaryInfC{\(\ttile{}  (  ( \dual{a} \parr{} a )  \otimes{} \dual{b} )  \parr{} b\)}
\AxiomC{}
\tidL{}\UnaryInfC{\(\ttile{} \dual{a},a\)}
\AxiomC{}
\tidL{}\UnaryInfC{\(\ttile{} \dual{a},a\)}
\AxiomC{}
\tidL{}\UnaryInfC{\(\ttile{} \dual{b},b\)}
\totimesL{}\BinaryInfC{\(\ttile{} \dual{a},a \otimes{} \dual{b},b\)}
\totimesL{}\BinaryInfC{\(\ttile{} \dual{a},a \otimes{} \dual{a},a \otimes{} \dual{b},b\)}
\texchL{}\UnaryInfC{\(\ttile{} a \otimes{} \dual{a},\dual{a},a \otimes{} \dual{b},b\)}
\tparL{}\UnaryInfC{\(\ttile{} a \otimes{} \dual{a},\dual{a} \parr{}  ( a \otimes{} \dual{b} ) ,b\)}
\texchL{}\UnaryInfC{\(\ttile{} a \otimes{} \dual{a},b,\dual{a} \parr{}  ( a \otimes{} \dual{b} ) \)}
\tparL{}\UnaryInfC{\(\ttile{}  ( a \otimes{} \dual{a} )  \parr{} b,\dual{a} \parr{}  ( a \otimes{} \dual{b} ) \)}
\tdemorL{}\UnaryInfC{\(\ttile{} \dual{ (  ( \dual{a} \parr{} a )  \otimes{} \dual{b} ) },\dual{a} \parr{}  ( a \otimes{} \dual{b} ) \)}
\texchL{}\UnaryInfC{\(\ttile{} \dual{a} \parr{}  ( a \otimes{} \dual{b} ) ,\dual{ (  ( \dual{a} \parr{} a )  \otimes{} \dual{b} ) }\)}
\AxiomC{}
\tidL{}\UnaryInfC{\(\ttile{} \dual{b},b\)}
\totimesL{}\BinaryInfC{\(\ttile{} \dual{a} \parr{}  ( a \otimes{} \dual{b} ) ,\dual{ (  ( \dual{a} \parr{} a )  \otimes{} \dual{b} ) } \otimes{} \dual{b},b\)}
\texchL{}\UnaryInfC{\(\ttile{} \dual{ (  ( \dual{a} \parr{} a )  \otimes{} \dual{b} ) } \otimes{} \dual{b},\dual{a} \parr{}  ( a \otimes{} \dual{b} ) ,b\)}
\tparL{}\UnaryInfC{\(\ttile{} \dual{ (  ( \dual{a} \parr{} a )  \otimes{} \dual{b} ) } \otimes{} \dual{b}, ( \dual{a} \parr{}  ( a \otimes{} \dual{b} )  )  \parr{} b\)}
\tdemorL{}\UnaryInfC{\(\ttile{} \dual{ (  (  ( \dual{a} \parr{} a )  \otimes{} \dual{b} )  \parr{} b ) }, ( \dual{a} \parr{}  ( a \otimes{} \dual{b} )  )  \parr{} b\)}
\tcutL{}\BinaryInfC{\(\ttile{}  ( \dual{a} \parr{}  ( a \otimes{} \dual{b} )  )  \parr{} b\)}
\end{prooftree}
\end{minipage}}
  \caption[Translation from a deep inference derivation to a sequent
  calculus one]{%
\label{fig:di2sc}Translation $\transdisc$ applied to the  deep
    inference derivation in \cref{fig:disample}}\vspace{-35pt}
\end{sidewaysfigure}

Similarly we may give a translation $\transscdi$ of derivations in the
system $\mllm{SC}$ to those in $\smllm{DI}$. First of all, we need to
translate sequents into formulae, and we do this by transferring the
sequent \({\ttile \Gamma \;=\; \ttile A_1, \dots, A_n}\) to the formula
\[{[\Gamma] = ((\dots(A_1 \parr A_2) \parr \dots) \parr A_n).}\]

\begin{proposition}\label{sctodi}
  For every derivation $\deriv{}$ of a sequent $\Gamma$ in $\mllm{SC}$
  there is a derivation $\transscdi\deriv{}$ in $\mllm{DI}$ of the formula consisting of
  all formulae in $\Gamma$ connected by $\llpar$ from left to right.
\end{proposition}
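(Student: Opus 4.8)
The plan is to induct on the structure of the $\mllm{SC}$ derivation $\deriv{}$, handling each of the five rules in turn: for each rule I assume the premise sequents have already been translated and exhibit the extra $\mllm{DI}$ steps that carry those translations down to a derivation of $[\Gamma]$. (Note that $[\Gamma]$ is always defined, since by \cref{derivnosseq} no derivable $\mllm{SC}$ sequent is empty.) Two structural facts drive everything. First, every rule of $\mllm{DI}$ rewrites a single designated subformula inside an arbitrary context $S\{\cdot\}$, so any $\mllm{DI}$ derivation from a formula $P$ to a formula $Q$ can be \emph{replayed inside any context}, yielding a derivation of $S\{Q\}$ from $S\{P\}$ by performing each step at correspondingly greater depth. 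Second, it is routine to derive the converse associativity from \sigmaUpT{} and \alphaUpT{}, so we may freely reassociate and reorder any iterated $\parr$; in particular the left-nested $[\Gamma]$ behaves, up to a fixed block of bookkeeping steps, like an unordered $\parr$ of its entries.

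With these in hand the nullary and unary rules are immediate. The axiom $\ttile \dual{A}, A$ translates to $[\dual{A}, A] = \dual{A}\parr A$, which is exactly an instance of the \iDownT{} axiom. For exchange we must turn $[\Gamma, A, B, \Delta]$ into $[\Gamma, B, A, \Delta]$, i.e. swap two adjacent entries of an iterated $\parr$, which is a short sequence of \alphaUpT{} and \sigmaUpT{} steps applied at the depth of the affected sub-par. For the $\parr$ rule the formulae $[\Gamma, A, B, \Delta]$ and $[\Gamma, A\parr B, \Delta]$ differ only in the bracketing of $A$ and $B$, so a single deep reassociation suffices (and nothing at all is required when $\Gamma$ and $\Delta$ are empty).

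The crux is the binary $\tensor$ rule, where two independent $\mllm{DI}$ derivations must be fused into one. Suppose induction supplies a derivation $\deriv{1}$ of $[\Gamma, A] = [\Gamma]\parr A$ and a derivation $\deriv{2}$ of $[B,\Delta]$, whose topmost line is some axiom $\dual{B_0}\parr B_0$. Starting from the conclusion $[\Gamma]\parr A$ of $\deriv{1}$, I apply \iDownT{} to the displayed $A$, in the context $[\Gamma]\parr\{\cdot\}$, to seed a copy of that axiom and obtain $[\Gamma]\parr(A\tensor(\dual{B_0}\parr B_0))$; I then replay all of $\deriv{2}$ except its initial axiom inside the context $[\Gamma]\parr(A\tensor\{\cdot\})$, turning the seed into $[\Gamma]\parr(A\tensor[B,\Delta])$. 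Finally I use the switch rule \switchT{} repeatedly, replacing $A\tensor(X\parr D)$ by $(A\tensor X)\parr D$, to pull each entry of $\Delta$ out of the tensor, and reassociate, arriving at $[\Gamma, A\tensor B, \Delta]$.

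The cut rule uses the same seed-and-replay pattern, closing with the deep-inference cut \iUpT{} in place of switch alone: from $\deriv{1}$ deriving $[\Gamma]\parr A$ I seed and replay $\deriv{2}$ to reach $[\Gamma]\parr(A\tensor[\dual{A},\Delta])$, apply \switchT{} and reassociation to expose $[\Gamma]\parr((A\tensor\dual{A})\parr[\Delta])$, and then fire \iUpT{} to delete the $A\tensor\dual{A}$, leaving $[\Gamma]\parr[\Delta]$, which reassociates to $[\Gamma,\Delta]$. I expect the only genuine difficulty to lie in these two binary cases: simulating a branching sequent rule inside a linear calculus forces the seed-and-replay device, and the accompanying \switchT{} and reassociation bookkeeping—lining up the left-nested $\parr$ brackets exactly with $[\Gamma, A\tensor B, \Delta]$ (respectively $[\Gamma,\Delta]$)—is where the argument demands care, even though each individual step is routine.
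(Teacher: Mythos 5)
Your proposal is correct and follows essentially the same route as the paper's own proof: induction on the $\mllm{SC}$ derivation with the same sequent translation, symmetry/associativity handling \axT{}, \exchT{} and \parT{}, and the identical seed-and-replay device for the two binary rules (introduce the other branch's topmost axiom deep inside via \iDownT{}, replay the remainder of that derivation in context, rearrange with \switchT{}, and close with \iUpT{} for cut). The only difference is orientation---the paper replays the translation of $\ttile \Gamma, A$ inside the translation of the \emph{other} premise viewed as a context, so $\Delta$ is absorbed by the context and a single \sigmaDownT{} plus \sigmaSwitchT{} suffices, whereas your mirror-image choice costs a chain of switches and reassociations (and, when $\Delta$ is empty in a cut, one extra \sigmaUpT{} so that \iUpT{} has a formula in the $B$ position to act on), but that is bookkeeping, not substance.
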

\begin{proof}
We give instructions how to transfer each instance of a rule,
and then a whole derivation may be transferred by applying these
rules recursively.

\begin{enumerate}
  \item For the base case, \idT{} (or \axT{}),
    this is simply the \iDownT{} rule or \aiDownT{} rule.
  \item If we have a derivation of the form:
    \begin{prooftree}
      \arbitrarySCAxioms{\(\deriv{0}\)}
      \UnaryInfC{\(\ttile \Gamma, A, B, \Delta\)}
      \parL{}\UnaryInfC{\(\ttile \Gamma, A \parr B, \Delta\)}
    \end{prooftree}
    then by hypothesis we have a derivation \(\transscdi\deriv{0}\) of
    \({S\{([\Gamma] \parr A) \parr B\}}\).
    Note that \({\ttile \Gamma, A \parr B, \Delta}\)
      is taken to be \({S\{[\Gamma] \parr (A \parr B)\}}\) in deep inference.
    Then the deep inference derivation is simply:
    \begin{prooftree}
      \arbitraryDIAxioms{\(\transscdi\deriv{0}\)}
      \UnaryInfC{\(S\{([\Gamma] \parr A) \parr B\}\)}
      \sigmaUpL{}\UnaryInfC{\(S\{B \parr ([\Gamma] \parr A)\}\)}
      \sigmaUpL{}\UnaryInfC{\(S\{B \parr (A \parr [\Gamma])\}\)}
      \alphaUpL{}\UnaryInfC{\(S\{(B \parr A) \parr [\Gamma]\}\)}
      \sigmaUpL{}\UnaryInfC{\(S\{[\Gamma] \parr (B \parr A)\}\)}
      \sigmaUpL{}\UnaryInfC{\(S\{[\Gamma] \parr (A \parr B)\}\)}
    \end{prooftree}
  \item If we have a derivation of the form:
    \begin{prooftree}
      \arbitrarySCAxioms{\(\deriv{0}\)}
      \UnaryInfC{\(\ttile \Gamma, A, B, \Delta\)}
      \exchL{}\UnaryInfC{\(\ttile \Gamma, B, A, \Delta\)}
    \end{prooftree}
    then by hypothesis we have a derivation \(\transscdi\deriv{0}\) of
    \(S\{(([\Gamma] \parr A) \parr B)\}\).
    Now, \({\ttile \Gamma, B, A, \Delta}\)
      is taken to be \({S\{(([\Gamma] \parr B) \parr A)\}}\) in deep inference.
    Then the deep inference derivation is simply:
    \begin{prooftree}
      \arbitraryDIAxioms{\(\transscdi\deriv{0}\)}
      \UnaryInfC{\(S\{([\Gamma] \parr A) \parr B\}\)}
      \sigmaUpL{}\UnaryInfC{\(S\{B \parr ([\Gamma] \parr A)\}\)}
      \alphaUpL{}\UnaryInfC{\(S\{(B \parr [\Gamma]) \parr A\}\)}
      \sigmaUpL{}\UnaryInfC{\(S\{([\Gamma] \parr B) \parr A\}\)}
    \end{prooftree}
  \item If we have a derivation of the form:
    \begin{prooftree}
      \arbitrarySCAxioms{\(\deriv{0}\)}
      \UnaryInfC{\(\ttile \Gamma, A\)}
      \arbitrarySCAxioms{\(\deriv{1}\)}
      \UnaryInfC{\(\ttile B, \Delta\)}
      \otimesL{}\BinaryInfC{\(\ttile \Gamma, A \otimes B, \Delta\)}
    \end{prooftree}
    then by hypothesis we a have derivation \(\transscdi\deriv{0}\) of \({[\Gamma] \parr A}\)
    of the form:
    \begin{prooftree}
      \AxiomC{}
      \ProofLabel{\aiDownT/\iDownT}\UnaryInfC{\(\dual{A_0} \parr A_0\)}
      \arbitraryDIRules{\(\deriv{0}'\)}
      \UnaryInfC{\([\Gamma] \parr A\)}
    \end{prooftree}
    and a derivation \(\transscdi\deriv{1}\) of \({S\{B\}}\).
    Then we want a deep inference derivation of
    \({S\{[\Gamma] \parr (A \otimes B)\}}\),
      which is given by:
    \begin{prooftree}
      \arbitraryDIAxioms{\(\transscdi\deriv{1}\)}
      \UnaryInfC{\(S\{B\}\)}
      \ProofLabel{\aiDownT/\iDownT}\UnaryInfC{\(S\{B \otimes (\dual{A_0} \parr A_0)\}\)}
      \arbitraryDIRules{\(S\{B \otimes \deriv{0}'\}\)}
      \UnaryInfC{\(S\{B \otimes ([\Gamma] \parr A)\}\)}
      \sigmaDownL{}\UnaryInfC{\(S\{([\Gamma] \parr A) \otimes B\}\)}
      \sigmaSwitchL{}\UnaryInfC{\(S\{[\Gamma] \parr (A \otimes B)\}\)}
    \end{prooftree}
  \item If we have a derivation of the form:
    \begin{prooftree}
      \arbitrarySCAxioms{\(\deriv{0}\)}
      \UnaryInfC{\(\ttile \Gamma, A\)}
      \arbitrarySCAxioms{\(\deriv{1}\)}
      \UnaryInfC{\(\ttile \dual{A}, \Delta\)}
      \cutL{}\BinaryInfC{\(\ttile \Gamma, \Delta\)}
    \end{prooftree}
    then by hypothesis we have a derivation \(\transscdi\deriv{0}\) of \({[\Gamma] \parr A}\)
    of the form:
    \begin{prooftree}
      \AxiomC{}
      \ProofLabel{\aiDownT/\iDownT}\UnaryInfC{\(\dual{A_0} \parr A_0\)}
      \arbitraryDIRules{\(\deriv{0}'\)}
      \UnaryInfC{\([\Gamma] \parr A\)}
    \end{prooftree}
    and a derivation \(\transscdi\deriv{1}\) of \({S\{\dual{A}\}}\).
    Then we want a deep inference derivation of
    \({S\{[\Gamma]\}}\), which is given by:
    \begin{prooftree}
      \arbitraryDIAxioms{\(\transscdi\deriv{1}\)}
      \UnaryInfC{\(S\{\dual{A}\}\)}
      \ProofLabel{\aiDownT/\iDownT}\UnaryInfC{\(S\{\dual{A} \otimes (\dual{A_0} \parr A_0)\}\)}
      \arbitraryDIRules{\(S\{\dual{A} \otimes \deriv{0}'\}\)}
      \UnaryInfC{\(S\{\dual{A} \otimes ([\Gamma] \parr A)\}\)}
      \sigmaDownL{}\UnaryInfC{\(S\{([\Gamma] \parr A) \otimes \dual{A}\}\)}
      \sigmaSwitchL{}\UnaryInfC{\(S\{[\Gamma] \parr (A \otimes \dual{A})\}\)}
      \sigmaUpL{}\UnaryInfC{\(S\{(A \otimes \dual{A}) \parr [\Gamma]\}\)}
      \iUpL{}\UnaryInfC{\(S\{[\Gamma]\}\)}
    \end{prooftree}
\end{enumerate}
\end{proof}

Notice that just as for the transformation going in the opposite
direction the resulting output derivation will usually be larger and
more complex than the input. For example, consider the following
sequent calculus derivation:
\begin{prooftree}
\AxiomC{}
\axL{}\UnaryInfC{\(\ttile{} \dual{a},a\)}
\AxiomC{}
\axL{}\UnaryInfC{\(\ttile{} \dual{b},b\)}
\otimesL{}\BinaryInfC{\(\ttile{} \dual{a},a \otimes{} \dual{b},b\)}
\AxiomC{}
\axL{}\UnaryInfC{\(\ttile{} \dual{b},b\)}
\AxiomC{}
\axL{}\UnaryInfC{\(\ttile{} \dual{c},c\)}
\otimesL{}\BinaryInfC{\(\ttile{} \dual{b},b \otimes{} \dual{c},c\)}
\cutL{}\BinaryInfC{\(\ttile{} \dual{a},a \otimes{} \dual{b},b \otimes{} \dual{c},c\)}
\end{prooftree}

Its translation into the deep inference system gives the following:
\begin{prooftree}
\AxiomC{}
\aiDownL{}\UnaryInfC{\( ( \dual{c} \parr{} c ) \)}
\aiDownL{}\UnaryInfC{\( (  ( \dual{c} \otimes{}  ( \dual{b} \parr{} b )  )  \parr{} c ) \)}
\sigmaDownL{}\UnaryInfC{\( (  (  ( \dual{b} \parr{} b )  \otimes{} \dual{c} )  \parr{} c ) \)}
\sigmaSwitchL{}\UnaryInfC{\( (  ( \dual{b} \parr{}  ( b \otimes{} \dual{c} )  )  \parr{} c ) \)}
\aiDownL{}\UnaryInfC{\( (  (  ( \dual{b} \otimes{}  ( \dual{b} \parr{} b )  )  \parr{}  ( b \otimes{} \dual{c} )  )  \parr{} c ) \)}
\aiDownL{}\UnaryInfC{\( (  (  ( \dual{b} \otimes{}  (  ( \dual{b} \otimes{}  ( \dual{a} \parr{} a )  )  \parr{} b )  )  \parr{}  ( b \otimes{} \dual{c} )  )  \parr{} c ) \)}
\sigmaDownL{}\UnaryInfC{\( (  (  ( \dual{b} \otimes{}  (  (  ( \dual{a} \parr{} a )  \otimes{} \dual{b} )  \parr{} b )  )  \parr{}  ( b \otimes{} \dual{c} )  )  \parr{} c ) \)}
\sigmaSwitchL{}\UnaryInfC{\( (  (  ( \dual{b} \otimes{}  (  ( \dual{a} \parr{}  ( a \otimes{} \dual{b} )  )  \parr{} b )  )  \parr{}  ( b \otimes{} \dual{c} )  )  \parr{} c ) \)}
\sigmaDownL{}\UnaryInfC{\( (  (  (  (  ( \dual{a} \parr{}  ( a \otimes{} \dual{b} )  )  \parr{} b )  \otimes{} \dual{b} )  \parr{}  ( b \otimes{} \dual{c} )  )  \parr{} c ) \)}
\sigmaSwitchL{}\UnaryInfC{\( (  (  (  ( \dual{a} \parr{}  ( a \otimes{} \dual{b} )  )  \parr{}  ( b \otimes{} \dual{b} )  )  \parr{}  ( b \otimes{} \dual{c} )  )  \parr{} c ) \)}
\sigmaUpL{}\UnaryInfC{\( (  (  (  ( b \otimes{} \dual{b} )  \parr{}  ( \dual{a} \parr{}  ( a \otimes{} \dual{b} )  )  )  \parr{}  ( b \otimes{} \dual{c} )  )  \parr{} c ) \)}
\aiUpL{}\UnaryInfC{\( (  (  ( \dual{a} \parr{}  ( a \otimes{} \dual{b} )  )  \parr{}  ( b \otimes{} \dual{c} )  )  \parr{} c ) \)}
\end{prooftree}

\begin{figure}[!htbp]

\tiny
\begin{prooftree}
\AxiomC{}
\taiDownL{}\UnaryInfC{\( ( \dual{b} \parr{} b ) \)}
\taiDownL{}\UnaryInfC{\( (  ( \dual{b} \otimes{}  ( b \parr{} \dual{b} )  )  \parr{} b ) \)}
\tsigmaUpL{}\UnaryInfC{\( (  ( \dual{b} \otimes{}  ( \dual{b} \parr{} b )  )  \parr{} b ) \)}
\taiDownL{}\UnaryInfC{\( (  ( \dual{b} \otimes{}  (  ( \dual{b} \otimes{}  ( \dual{a} \parr{} a )  )  \parr{} b )  )  \parr{} b ) \)}
\tsigmaDownL{}\UnaryInfC{\( (  ( \dual{b} \otimes{}  (  (  ( \dual{a} \parr{} a )  \otimes{} \dual{b} )  \parr{} b )  )  \parr{} b ) \)}
\tsigmaSwitchL{}\UnaryInfC{\( (  ( \dual{b} \otimes{}  (  ( \dual{a} \parr{}  ( a \otimes{} \dual{b} )  )  \parr{} b )  )  \parr{} b ) \)}
\taiDownL{}\UnaryInfC{\( (  ( \dual{b} \otimes{}  (  (  ( \dual{a} \otimes{}  ( a \parr{} \dual{a} )  )  \parr{}  ( a \otimes{} \dual{b} )  )  \parr{} b )  )  \parr{} b ) \)}
\tsigmaalphaL{}\UnaryInfC{\( (  ( \dual{b} \otimes{}  (  (  (  ( \dual{a} \parr{} a )  \otimes{} \dual{a} )  \parr{}  ( a \otimes{} \dual{b} )  )  \parr{} b )  )  \parr{} b ) \)}
\tsigmaSwitchL{}\UnaryInfC{\( (  ( \dual{b} \otimes{}  (  (  ( \dual{a} \parr{}  ( a \otimes{} \dual{a} )  )  \parr{}  ( a \otimes{} \dual{b} )  )  \parr{} b )  )  \parr{} b ) \)}
\tsigmaalphaL{}\UnaryInfC{\( (  ( \dual{b} \otimes{}  (  (  ( a \otimes{} \dual{a} )  \parr{} b )  \parr{}  ( \dual{a} \parr{}  ( a \otimes{} \dual{b} )  )  )  )  \parr{} b ) \)}
\tdemorL{}\UnaryInfC{\( (  ( \dual{b} \otimes{}  (  ( \dual{ ( \dual{a} \parr{} a ) } \parr{} b )  \parr{}  ( \dual{a} \parr{}  ( a \otimes{} \dual{b} )  )  )  )  \parr{} b ) \)}
\tdemorL{}\UnaryInfC{\( (  ( \dual{b} \otimes{}  ( \dual{ (  ( \dual{a} \parr{} a )  \otimes{} \dual{b} ) } \parr{}  ( \dual{a} \parr{}  ( a \otimes{} \dual{b} )  )  )  )  \parr{} b ) \)}
\tsigmaalphaL{}\UnaryInfC{\( (  (  (  ( \dual{a} \parr{}  ( a \otimes{} \dual{b} )  )  \parr{} \dual{ (  ( \dual{a} \parr{} a )  \otimes{} \dual{b} ) } )  \otimes{} \dual{b} )  \parr{} b ) \)}
\tsigmaSwitchL{}\UnaryInfC{\( (  (  ( \dual{a} \parr{}  ( a \otimes{} \dual{b} )  )  \parr{}  ( \dual{ (  ( \dual{a} \parr{} a )  \otimes{} \dual{b} ) } \otimes{} \dual{b} )  )  \parr{} b ) \)}
\tsigmaalphaL{}\UnaryInfC{\( (  ( \dual{ (  ( \dual{a} \parr{} a )  \otimes{} \dual{b} ) } \otimes{} \dual{b} )  \parr{}  (  ( \dual{a} \parr{}  ( a \otimes{} \dual{b} )  )  \parr{} b )  ) \)}
\tdemorL{}\UnaryInfC{\( ( \dual{ (  (  ( \dual{a} \parr{} a )  \otimes{} \dual{b} )  \parr{} b ) } \parr{}  (  ( \dual{a} \parr{}  ( a \otimes{} \dual{b} )  )  \parr{} b )  ) \)}
\taiDownL{}\UnaryInfC{\( (  ( \dual{ (  (  ( \dual{a} \parr{} a )  \otimes{} \dual{b} )  \parr{} b ) } \otimes{}  ( \dual{b} \parr{} b )  )  \parr{}  (  ( \dual{a} \parr{}  ( a \otimes{} \dual{b} )  )  \parr{} b )  ) \)}
\taiDownL{}\UnaryInfC{\( (  ( \dual{ (  (  ( \dual{a} \parr{} a )  \otimes{} \dual{b} )  \parr{} b ) } \otimes{}  (  ( \dual{b} \otimes{}  ( b \parr{} \dual{b} )  )  \parr{} b )  )  \parr{}  (  ( \dual{a} \parr{}  ( a \otimes{} \dual{b} )  )  \parr{} b )  ) \)}
\tsigmaUpL{}\UnaryInfC{\( (  ( \dual{ (  (  ( \dual{a} \parr{} a )  \otimes{} \dual{b} )  \parr{} b ) } \otimes{}  (  ( \dual{b} \otimes{}  ( \dual{b} \parr{} b )  )  \parr{} b )  )  \parr{}  (  ( \dual{a} \parr{}  ( a \otimes{} \dual{b} )  )  \parr{} b )  ) \)}
\taiDownL{}\UnaryInfC{\( (  ( \dual{ (  (  ( \dual{a} \parr{} a )  \otimes{} \dual{b} )  \parr{} b ) } \otimes{}  (  ( \dual{b} \otimes{}  (  ( \dual{b} \otimes{}  ( \dual{ ( \dual{a} \parr{} a ) } \parr{}  ( \dual{a} \parr{} a )  )  )  \parr{} b )  )  \parr{} b )  )  \parr{}  (  ( \dual{a} \parr{}  ( a \otimes{} \dual{b} )  )  \parr{} b )  ) \)}
\tsigmaDownL{}\UnaryInfC{\( (  ( \dual{ (  (  ( \dual{a} \parr{} a )  \otimes{} \dual{b} )  \parr{} b ) } \otimes{}  (  ( \dual{b} \otimes{}  (  (  ( \dual{ ( \dual{a} \parr{} a ) } \parr{}  ( \dual{a} \parr{} a )  )  \otimes{} \dual{b} )  \parr{} b )  )  \parr{} b )  )  \parr{}  (  ( \dual{a} \parr{}  ( a \otimes{} \dual{b} )  )  \parr{} b )  ) \)}
\tsigmaSwitchL{}\UnaryInfC{\( (  ( \dual{ (  (  ( \dual{a} \parr{} a )  \otimes{} \dual{b} )  \parr{} b ) } \otimes{}  (  ( \dual{b} \otimes{}  (  ( \dual{ ( \dual{a} \parr{} a ) } \parr{}  (  ( \dual{a} \parr{} a )  \otimes{} \dual{b} )  )  \parr{} b )  )  \parr{} b )  )  \parr{}  (  ( \dual{a} \parr{}  ( a \otimes{} \dual{b} )  )  \parr{} b )  ) \)}
\tsigmaalphaL{}\UnaryInfC{\( (  ( \dual{ (  (  ( \dual{a} \parr{} a )  \otimes{} \dual{b} )  \parr{} b ) } \otimes{}  (  ( \dual{b} \otimes{}  (  ( b \parr{} \dual{ ( \dual{a} \parr{} a ) } )  \parr{}  (  ( \dual{a} \parr{} a )  \otimes{} \dual{b} )  )  )  \parr{} b )  )  \parr{}  (  ( \dual{a} \parr{}  ( a \otimes{} \dual{b} )  )  \parr{} b )  ) \)}
\tdemorL{}\UnaryInfC{\( (  ( \dual{ (  (  ( \dual{a} \parr{} a )  \otimes{} \dual{b} )  \parr{} b ) } \otimes{}  (  ( \dual{b} \otimes{}  ( \dual{ ( \dual{b} \otimes{}  ( \dual{a} \parr{} a )  ) } \parr{}  (  ( \dual{a} \parr{} a )  \otimes{} \dual{b} )  )  )  \parr{} b )  )  \parr{}  (  ( \dual{a} \parr{}  ( a \otimes{} \dual{b} )  )  \parr{} b )  ) \)}
\tsigmaalphaL{}\UnaryInfC{\( (  ( \dual{ (  (  ( \dual{a} \parr{} a )  \otimes{} \dual{b} )  \parr{} b ) } \otimes{}  (  (  (  (  ( \dual{a} \parr{} a )  \otimes{} \dual{b} )  \parr{} \dual{ ( \dual{b} \otimes{}  ( \dual{a} \parr{} a )  ) } )  \otimes{} \dual{b} )  \parr{} b )  )  \parr{}  (  ( \dual{a} \parr{}  ( a \otimes{} \dual{b} )  )  \parr{} b )  ) \)}
\tsigmaSwitchL{}\UnaryInfC{\( (  ( \dual{ (  (  ( \dual{a} \parr{} a )  \otimes{} \dual{b} )  \parr{} b ) } \otimes{}  (  (  (  ( \dual{a} \parr{} a )  \otimes{} \dual{b} )  \parr{}  ( \dual{ ( \dual{b} \otimes{}  ( \dual{a} \parr{} a )  ) } \otimes{} \dual{b} )  )  \parr{} b )  )  \parr{}  (  ( \dual{a} \parr{}  ( a \otimes{} \dual{b} )  )  \parr{} b )  ) \)}
\tsigmaalphaL{}\UnaryInfC{\( (  ( \dual{ (  (  ( \dual{a} \parr{} a )  \otimes{} \dual{b} )  \parr{} b ) } \otimes{}  (  ( \dual{ ( \dual{b} \otimes{}  ( \dual{a} \parr{} a )  ) } \otimes{} \dual{b} )  \parr{}  (  (  ( \dual{a} \parr{} a )  \otimes{} \dual{b} )  \parr{} b )  )  )  \parr{}  (  ( \dual{a} \parr{}  ( a \otimes{} \dual{b} )  )  \parr{} b )  ) \)}
\tdemorL{}\UnaryInfC{\( (  ( \dual{ (  (  ( \dual{a} \parr{} a )  \otimes{} \dual{b} )  \parr{} b ) } \otimes{}  ( \dual{ (  ( \dual{b} \otimes{}  ( \dual{a} \parr{} a )  )  \parr{} b ) } \parr{}  (  (  ( \dual{a} \parr{} a )  \otimes{} \dual{b} )  \parr{} b )  )  )  \parr{}  (  ( \dual{a} \parr{}  ( a \otimes{} \dual{b} )  )  \parr{} b )  ) \)}
\taiDownL{}\UnaryInfC{\( (  ( \dual{ (  (  ( \dual{a} \parr{} a )  \otimes{} \dual{b} )  \parr{} b ) } \otimes{}  (  ( \dual{ (  ( \dual{b} \otimes{}  ( \dual{a} \parr{} a )  )  \parr{} b ) } \otimes{}  ( \dual{b} \parr{} b )  )  \parr{}  (  (  ( \dual{a} \parr{} a )  \otimes{} \dual{b} )  \parr{} b )  )  )  \parr{}  (  ( \dual{a} \parr{}  ( a \otimes{} \dual{b} )  )  \parr{} b )  ) \)}
\taiDownL{}\UnaryInfC{\( (  ( \dual{ (  (  ( \dual{a} \parr{} a )  \otimes{} \dual{b} )  \parr{} b ) } \otimes{}  (  ( \dual{ (  ( \dual{b} \otimes{}  ( \dual{a} \parr{} a )  )  \parr{} b ) } \otimes{}  (  ( \dual{b} \otimes{}  ( \dual{a} \parr{} a )  )  \parr{} b )  )  \parr{}  (  (  ( \dual{a} \parr{} a )  \otimes{} \dual{b} )  \parr{} b )  )  )  \parr{}  (  ( \dual{a} \parr{}  ( a \otimes{} \dual{b} )  )  \parr{} b )  ) \)}
\taiDownL{}\UnaryInfC{\( (  ( \dual{ (  (  ( \dual{a} \parr{} a )  \otimes{} \dual{b} )  \parr{} b ) } \otimes{}  (  ( \dual{ (  ( \dual{b} \otimes{}  ( \dual{a} \parr{} a )  )  \parr{} b ) } \otimes{}  (  ( \dual{b} \otimes{}  (  ( \dual{a} \parr{} a )  \otimes{}  ( b \parr{} \dual{b} )  )  )  \parr{} b )  )  \parr{}  (  (  ( \dual{a} \parr{} a )  \otimes{} \dual{b} )  \parr{} b )  )  )  \parr{}  (  ( \dual{a} \parr{}  ( a \otimes{} \dual{b} )  )  \parr{} b )  ) \)}
\tsigmaDownL{}\UnaryInfC{\( (  ( \dual{ (  (  ( \dual{a} \parr{} a )  \otimes{} \dual{b} )  \parr{} b ) } \otimes{}  (  ( \dual{ (  ( \dual{b} \otimes{}  ( \dual{a} \parr{} a )  )  \parr{} b ) } \otimes{}  (  ( \dual{b} \otimes{}  (  ( b \parr{} \dual{b} )  \otimes{}  ( \dual{a} \parr{} a )  )  )  \parr{} b )  )  \parr{}  (  (  ( \dual{a} \parr{} a )  \otimes{} \dual{b} )  \parr{} b )  )  )  \parr{}  (  ( \dual{a} \parr{}  ( a \otimes{} \dual{b} )  )  \parr{} b )  ) \)}
\tsigmaSwitchL{}\UnaryInfC{\( (  ( \dual{ (  (  ( \dual{a} \parr{} a )  \otimes{} \dual{b} )  \parr{} b ) } \otimes{}  (  ( \dual{ (  ( \dual{b} \otimes{}  ( \dual{a} \parr{} a )  )  \parr{} b ) } \otimes{}  (  ( \dual{b} \otimes{}  ( b \parr{}  ( \dual{b} \otimes{}  ( \dual{a} \parr{} a )  )  )  )  \parr{} b )  )  \parr{}  (  (  ( \dual{a} \parr{} a )  \otimes{} \dual{b} )  \parr{} b )  )  )  \parr{}  (  ( \dual{a} \parr{}  ( a \otimes{} \dual{b} )  )  \parr{} b )  ) \)}
\tsigmaalphaL{}\UnaryInfC{\( (  ( \dual{ (  (  ( \dual{a} \parr{} a )  \otimes{} \dual{b} )  \parr{} b ) } \otimes{}  (  ( \dual{ (  ( \dual{b} \otimes{}  ( \dual{a} \parr{} a )  )  \parr{} b ) } \otimes{}  (  (  (  ( \dual{b} \otimes{}  ( \dual{a} \parr{} a )  )  \parr{} b )  \otimes{} \dual{b} )  \parr{} b )  )  \parr{}  (  (  ( \dual{a} \parr{} a )  \otimes{} \dual{b} )  \parr{} b )  )  )  \parr{}  (  ( \dual{a} \parr{}  ( a \otimes{} \dual{b} )  )  \parr{} b )  ) \)}
\tsigmaSwitchL{}\UnaryInfC{\( (  ( \dual{ (  (  ( \dual{a} \parr{} a )  \otimes{} \dual{b} )  \parr{} b ) } \otimes{}  (  ( \dual{ (  ( \dual{b} \otimes{}  ( \dual{a} \parr{} a )  )  \parr{} b ) } \otimes{}  (  (  ( \dual{b} \otimes{}  ( \dual{a} \parr{} a )  )  \parr{}  ( b \otimes{} \dual{b} )  )  \parr{} b )  )  \parr{}  (  (  ( \dual{a} \parr{} a )  \otimes{} \dual{b} )  \parr{} b )  )  )  \parr{}  (  ( \dual{a} \parr{}  ( a \otimes{} \dual{b} )  )  \parr{} b )  ) \)}
\tsigmaalphaL{}\UnaryInfC{\( (  ( \dual{ (  (  ( \dual{a} \parr{} a )  \otimes{} \dual{b} )  \parr{} b ) } \otimes{}  (  ( \dual{ (  ( \dual{b} \otimes{}  ( \dual{a} \parr{} a )  )  \parr{} b ) } \otimes{}  (  ( b \otimes{} \dual{b} )  \parr{}  (  ( \dual{b} \otimes{}  ( \dual{a} \parr{} a )  )  \parr{} b )  )  )  \parr{}  (  (  ( \dual{a} \parr{} a )  \otimes{} \dual{b} )  \parr{} b )  )  )  \parr{}  (  ( \dual{a} \parr{}  ( a \otimes{} \dual{b} )  )  \parr{} b )  ) \)}
\tdemorL{}\UnaryInfC{\( (  ( \dual{ (  (  ( \dual{a} \parr{} a )  \otimes{} \dual{b} )  \parr{} b ) } \otimes{}  (  ( \dual{ (  ( \dual{b} \otimes{}  ( \dual{a} \parr{} a )  )  \parr{} b ) } \otimes{}  ( \dual{ ( \dual{b} \parr{} b ) } \parr{}  (  ( \dual{b} \otimes{}  ( \dual{a} \parr{} a )  )  \parr{} b )  )  )  \parr{}  (  (  ( \dual{a} \parr{} a )  \otimes{} \dual{b} )  \parr{} b )  )  )  \parr{}  (  ( \dual{a} \parr{}  ( a \otimes{} \dual{b} )  )  \parr{} b )  ) \)}
\taiDownL{}\UnaryInfC{\( (  ( \dual{ (  (  ( \dual{a} \parr{} a )  \otimes{} \dual{b} )  \parr{} b ) } \otimes{}  (  ( \dual{ (  ( \dual{b} \otimes{}  ( \dual{a} \parr{} a )  )  \parr{} b ) } \otimes{}  (  ( \dual{ ( \dual{b} \parr{} b ) } \otimes{}  ( \dual{b} \parr{} b )  )  \parr{}  (  ( \dual{b} \otimes{}  ( \dual{a} \parr{} a )  )  \parr{} b )  )  )  \parr{}  (  (  ( \dual{a} \parr{} a )  \otimes{} \dual{b} )  \parr{} b )  )  )  \parr{}  (  ( \dual{a} \parr{}  ( a \otimes{} \dual{b} )  )  \parr{} b )  ) \)}
\tsigmaDownL{}\UnaryInfC{\( (  ( \dual{ (  (  ( \dual{a} \parr{} a )  \otimes{} \dual{b} )  \parr{} b ) } \otimes{}  (  ( \dual{ (  ( \dual{b} \otimes{}  ( \dual{a} \parr{} a )  )  \parr{} b ) } \otimes{}  (  (  ( \dual{b} \parr{} b )  \otimes{} \dual{ ( \dual{b} \parr{} b ) } )  \parr{}  (  ( \dual{b} \otimes{}  ( \dual{a} \parr{} a )  )  \parr{} b )  )  )  \parr{}  (  (  ( \dual{a} \parr{} a )  \otimes{} \dual{b} )  \parr{} b )  )  )  \parr{}  (  ( \dual{a} \parr{}  ( a \otimes{} \dual{b} )  )  \parr{} b )  ) \)}
\taiUpL{}\UnaryInfC{\( (  ( \dual{ (  (  ( \dual{a} \parr{} a )  \otimes{} \dual{b} )  \parr{} b ) } \otimes{}  (  ( \dual{ (  ( \dual{b} \otimes{}  ( \dual{a} \parr{} a )  )  \parr{} b ) } \otimes{}  (  ( \dual{b} \otimes{}  ( \dual{a} \parr{} a )  )  \parr{} b )  )  \parr{}  (  (  ( \dual{a} \parr{} a )  \otimes{} \dual{b} )  \parr{} b )  )  )  \parr{}  (  ( \dual{a} \parr{}  ( a \otimes{} \dual{b} )  )  \parr{} b )  ) \)}
\tsigmaDownL{}\UnaryInfC{\( (  ( \dual{ (  (  ( \dual{a} \parr{} a )  \otimes{} \dual{b} )  \parr{} b ) } \otimes{}  (  (  (  ( \dual{b} \otimes{}  ( \dual{a} \parr{} a )  )  \parr{} b )  \otimes{} \dual{ (  ( \dual{b} \otimes{}  ( \dual{a} \parr{} a )  )  \parr{} b ) } )  \parr{}  (  (  ( \dual{a} \parr{} a )  \otimes{} \dual{b} )  \parr{} b )  )  )  \parr{}  (  ( \dual{a} \parr{}  ( a \otimes{} \dual{b} )  )  \parr{} b )  ) \)}
\taiUpL{}\UnaryInfC{\( (  ( \dual{ (  (  ( \dual{a} \parr{} a )  \otimes{} \dual{b} )  \parr{} b ) } \otimes{}  (  (  ( \dual{a} \parr{} a )  \otimes{} \dual{b} )  \parr{} b )  )  \parr{}  (  ( \dual{a} \parr{}  ( a \otimes{} \dual{b} )  )  \parr{} b )  ) \)}
\tsigmaDownL{}\UnaryInfC{\( (  (  (  (  ( \dual{a} \parr{} a )  \otimes{} \dual{b} )  \parr{} b )  \otimes{} \dual{ (  (  ( \dual{a} \parr{} a )  \otimes{} \dual{b} )  \parr{} b ) } )  \parr{}  (  ( \dual{a} \parr{}  ( a \otimes{} \dual{b} )  )  \parr{} b )  ) \)}
\taiUpL{}\UnaryInfC{\( (  ( \dual{a} \parr{}  ( a \otimes{} \dual{b} )  )  \parr{} b ) \)}
\end{prooftree}
\caption[Transformation of derivation example]{%
  \label{fig:sc2di}Translation $\transscdi$ applied to the derivation
  in \protect\cref{fig:di2sc} back into $\protect\mllm{DI}$ (symmetry
  and associativity rules contracted and labelled
  $\protect\sigma,\alpha$)}
\end{figure}

We wish to analyse the relative complexities of the proofs before and after a
transformation is used and we therefore introduce the following notation.
We write, for example, \(\proofsize{\proofcut{\deriv{}}}\) for the number of \cutT{}
rules in the proof \(\deriv{}\), or, where the relevant proof is clear, we just write
\(\proofsize{\proofcut{}}\).

Most of the complexity in the transformation \(\transscdi\) comes from the syntactic
associativity and symmetry rules \(\alpha, \sigma\). It is easy to see from the
proof above that
\begin{align*}
  \proofsize{\proofid{\deriv{}}} &= \proofsize{\proofiDown{\transscdi\deriv{}}},
  \\
  \proofsize{\proofax{\deriv{}}} &= \proofsize{\proofaiDown{\transscdi\deriv{}}},
  \\
  \proofsize{\proofcut{\deriv{}}} &= \proofsize{\proofiUp{\transscdi\deriv{}}}
\end{align*}
and
\[ \proofsize{\proofcut{\deriv{}}} + \proofsize{\proofotimes{\deriv{}}} = \proofsize{\proofswitch{\transscdi\deriv{}}}. \]
It is trivial to show that for any $\mllm{SC}$ proof
\[ \proofsize{\proofcut{\deriv{}}} + \proofsize{\proofotimes{\deriv{}}} =
\proofsize{\proofax{\deriv{}}} + \proofsize{\proofid{\deriv{}}} - 1 \]
so the number of switch rules in a translated proof additionally obeys the following equality:
\[
  \proofsize{\proofaiDown{\transscdi\deriv{}}} +
  \proofsize{\proofiDown{\transscdi\deriv{}}} - 1 =
  \proofsize{\proofswitch{\transscdi\deriv{}}}.
\]

Now for the transformation \(\transdisc\) it is clear that these
equalities above become rather large inequalities.
Furthermore, consider the sequent calculus derivation in
\cref{fig:di2sc}. The deep inference derivation resulting from the
above transformation is given in \cref{fig:sc2di}.
We use this as justification that
\(\transdisc\) greatly increases the complexity of proofs it transforms.

\subsection{An alternative translation}

The translation $\transdisc$ in particular makes use of the cut rule
in a way that introduces a large number of cuts into the resulting
derivation. It turns out that most of these are unnecessary and we
provide an alternative translation $\transdiscalt$ here.

\begin{proposition}\label{ditosc_direct}
  If $\deriv{}$ is a derivation of a formula $A$ in $\mllm{DI}$ then
  there is a derivation $\transdiscalt\deriv{}$ of ${\ttile A}$ in $\mllm{SC}$.
\end{proposition}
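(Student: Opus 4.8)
The plan is to mirror the structure of $\transdisc$ from \cref{ditosc} but to replace almost all of its cuts by direct surgery on a running sequent calculus derivation, keeping a \cutT{} only where it is genuinely unavoidable, namely at instances of \iUpT{}. The starting observation is that every rule of $\mllm{DI}$ is unary, the only nullary rule being the leaf axiom \iDownT{}, so a derivation $\deriv{}$ is a linear chain of formulae $\dual{A_0}\parr A_0 = C_0, C_1, \dots, C_n = A$ in which each $C_{k+1}$ is obtained from $C_k$ by a single rule, and in which every axiom pair other than the first is introduced by an \iDownT{}. I would therefore proceed by induction along this chain, maintaining as invariant a derivation of $\ttile C_k$ in $\mllm{SC}$, and extend it one rule at a time.

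For the base case the formula $\dual{A_0}\parr A_0$ is derived by an \idT{} followed by a \parT{}. For the inductive step, suppose the rule taking $C_k=S\{P\}$ to $C_{k+1}=S\{Q\}$ is \iUpT{}. Here I would keep the cut: exactly as in \cref{ditosc} I form the cut-free derivation of $\ttile \dual{P}, Q$, lift it through the context $S$ to a derivation of $\ttile \dual{S\{P\}}, S\{Q\}$, and cut it against the inductively obtained derivation of $\ttile S\{P\}$. This is the one place where dual occurrences of a subformula must be cancelled, which is precisely the role of \cutT{}, so no cut-free alternative is available short of performing full cut elimination.

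For every other rule --- \iDownT{}, \sigmaUpT{}, \sigmaDownT{}, \alphaUpT{}, \alphaDownT{} and \switchT{} --- I would avoid the cut entirely. The key point is that the derivations of $\ttile \dual{P}, Q$ in \cref{fig:pq_proof_i,fig:pq_proof_switch,fig:pq_proof_assoc,fig:pq_proof_comm} are already cut-free, and each such rule only rearranges, or inserts an axiom pair beneath, the indicated subformula. Concretely, I would trace the distinguished occurrence of $P$ upward through the derivation of $\ttile S\{P\}$ to the inference that introduces its principal connective (or, for \switchT{}, the par forming $B\parr C$); such an inference always exists because the occurrence can be followed up through the derivation until it is produced by a \parT{}, \otimesT{} or \idT{}. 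At that point I splice in the local restructuring: for \iDownT{} I tensor the freshly created $\dual{A}\parr A$ onto the occurrence of $B$; for the symmetry and associativity rules I swap or reassociate the premises feeding the relevant \parT{}/\otimesT{}; and for \switchT{} I delay the \parT{} that would form $B\parr C$ and instead produce $(A\otimes B)\parr C$ by a \otimesT{} followed by a \parT{}. The remainder of the derivation is then replayed with the modified subformula carried through as a unit, yielding $\ttile S\{Q\}$ with no new cut.

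The main obstacle is verifying that this splicing is legitimate, that is, that replaying the rest of the derivation really produces $S\{Q\}$ and does not clash with the intervening inferences. Since sequent rules only ever act on the top-level connective of a formula, any inference that does not have the distinguished occurrence as its principal formula is insensitive to the surgery and can be replayed unchanged; the only inferences that must be altered are those directly building the occurrence, which is exactly what the local restructuring handles. The \switchT{} case is the most delicate, because there one must keep $B$ and $C$ separate across the inferences lying between the \parT{} that would have formed $B\parr C$ and the \otimesT{} that consumes it, and check that none of those inferences uses $B\parr C$ as a principal formula --- which holds because in the final formula the parent of $B\parr C$ is precisely that \otimesT{}. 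A clean way to package all of this is an occurrence-tracking lemma stating that for any context $S$ and any of these six rules a derivation of $\ttile \Sigma, S\{P\}$ can be transformed into one of $\ttile \Sigma, S\{Q\}$ with the same number of \cutT{} instances, proved by induction on the derivation; the proposition then follows by iterating this lemma together with the \iUpT{} step along the chain $C_0,\dots,C_n$.
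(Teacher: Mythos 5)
Your overall plan coincides with the paper's: walk along the linear deep inference derivation, maintain a sequent calculus derivation of the current formula, do local surgery for \iDownT{}, the $\sigma$/$\alpha$ rules and \switchT{}, and spend one \cutT{} per \iUpT{}. But there is a genuine gap in the step that makes all the surgery cases work. You skip the paper's first move, which is to apply \cref{diatomsonly} so that every introduction becomes the atomic \aiDownT{}; instead, your base case and your \iDownT{} case use the general axiom \idT{} on a possibly compound formula. This breaks your occurrence-tracking claim: a subformula occurrence that is born strictly inside a non-atomic \idT{} axiom is never the principal formula of any \parT{} or \otimesT{} inference, so there is no inference ``introducing its principal connective'' at which your surgery could be spliced in --- being ``produced by \idT{}'' is not the same as having one's top connective introduced by a rule whose premises can be swapped, reassociated, or delayed. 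Concretely: let the deep inference derivation start with the axiom $\dual{A_0} \parr A_0$ for $A_0 = a \tensor b$ and then apply \sigmaDownT{} to the subformula $a \tensor b$. Your maintained derivation is a single \idT{} giving $\ttile \dual{a} \parr \dual{b}, a \tensor b$ followed by \parT{}; there is no \otimesT{} whose premises can be swapped, and your case analysis has nothing to act on. The same failure recurs whenever an \iDownT{} inserts a compound $\dual{A} \parr A$ and a later rule rewrites inside it.

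Moreover, your treatment of \iUpT{} makes this worse rather than better: by re-deriving $\ttile \dual{S\{P\}}, S\{Q\}$ as in \cref{ditosc} and cutting on the whole formula, you import the liftings of \cref{fig:pq_to_spsq}, which introduce a non-atomic \idT{} axiom for every context layer. So even if you atomized at the start, each \iUpT{} step would re-contaminate the maintained derivation with compound axioms, and any later rule acting inside the context would again have no inference to target. The paper avoids both problems at once: after the initial atomization the only axioms ever present are \axT{}, so every connective really is introduced by an explicit \parT{} or \otimesT{}; and \iUpT{} is handled not by a cut on $S\{P\}$ but by locating the \otimesT{} that built $A \tensor \dual{A}$ and replacing that single inference by a \cutT{} on $A$, which adds no new axioms (and is what makes the rule-counting equalities stated after the proposition hold). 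Your argument becomes correct if you make these two adjustments; as written, the induction invariant you need is not maintained.
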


\begin{proof}
  \newcommand{\SCRSIZE}{0.5}
  We again describe a recursive procedure to define the translation.
  To begin, use the procedure in \cref{diatomsonly}
  to convert \iDownT{} to \aiDownT{}. The transformation is then explained below.

  For the base case, use \axT{} followed by \parT{} for \aiDownT{}.

  Given a deep inference derivation $\deriv{}$ that transforms into a sequent
  calculus derivation $\transdiscalt\deriv{}$, consider what the
  resulting transformed proof $\transdiscalt\deriv{}$ looks like when the following rule is
  inductively appended to $\deriv{}$:

  \begin{enumerate}

    \item \aiDownT{}:
      It is easy to see from the base case and by hypothesis that
      $\transdiscalt\deriv{}$ has
      the form:
      \begin{prooftree}
        \tiny
        \arbitrarySCAxioms[\SCRSIZE]{$\deriv{0}$}
        \UnaryInfC{$\ttile \Gamma, B, \Gamma'$}
        \arbitrarySCRules[\SCRSIZE]{$\deriv{1}$}
        \UnaryInfC{$\ttile S\{B\}$}
      \end{prooftree}
      where there exists a location in the proof where B is a distinct element
      of a sequent. This can be done by traversing up the proof tree from
      $\ttile S\{B\}$. It should be relatively easy to see that this property is
      true in the base case because we use the procedure to remove \iDownT{},
      and this property is maintained throughout the proof.

      Transform $\transdiscalt\deriv{}$ into:
      \begin{prooftree}
        \tiny
        \arbitrarySCAxioms[\SCRSIZE]{$\deriv{0}$}
        \UnaryInfC{$\ttile \Gamma, B, \Gamma'$}
        \arbitrarySCRules[\SCRSIZE]{$\deriv{e}$}
        \UnaryInfC{$\ttile \Gamma, \Gamma', B$}
        \AxiomC{}
        \taxL{}\UnaryInfC{$\ttile \dual{a}, a$}
        \tparL{}\UnaryInfC{$\ttile \dual{a} \parr a$}
        \totimesL{}\BinaryInfC{$\ttile \Gamma, \Gamma', B \otimes (\dual{a} \parr a)$}
        \arbitrarySCRules[\SCRSIZE]{$\deriv{e}$}
        \UnaryInfC{$\ttile \Gamma, B \otimes (\dual{a} \parr a), \Gamma'$}
        \arbitrarySCRules[\SCRSIZE]{$\deriv{1}$}
        \UnaryInfC{$\ttile S\{B \otimes (\dual{a} \parr a)\}$}
      \end{prooftree}
      where the $\deriv{e}$ append as many \exchT{} rules as appropriate as many
      times as there are formulae in $\Gamma'$.

      One might think that altering the premise to $\deriv{1}$ may lead to an
      invalid proof when the \cutT{} rule is used. However, because we were able
      to find the location where $B$ was its own distinct element, we know that
      it was not cut out.

    \item \dontbreak{ \switchT{}:
      \begin{center}\begin{tikzcd}[column sep = large, row sep = large]
        \tiny
        \begin{bprooftree}
          \arbitrarySCAxioms[\SCRSIZE]{$\deriv{0A}$}
          \UnaryInfC{$\ttile \Gamma, A$}
          \arbitrarySCAxioms[\SCRSIZE]{$\deriv{0B}$}
          \UnaryInfC{$\ttile \Gamma'_0, B, C, \Gamma'_1$}
          \tparL{}\UnaryInfC{$\ttile \Gamma'_0, B \parr C, \Gamma'_1$}
          \arbitrarySCRules[\SCRSIZE]{$\deriv{1}$}
          \UnaryInfC{$\ttile B \parr C, \Gamma'$}
          \totimesL{}\BinaryInfC{$\ttile \Gamma, A \otimes (B \parr C), \Gamma'$}
          \arbitrarySCRules[\SCRSIZE]{$\deriv{2}$}
          \UnaryInfC{$\ttile S\{A \otimes (B \parr C)\}$}
        \end{bprooftree}
      \arrow[r, "\text{becomes}"] &
        \tiny
        \begin{bprooftree}
          \arbitrarySCAxioms[\SCRSIZE]{$\deriv{0A}$}
          \UnaryInfC{$\ttile \Gamma, A$}
          \arbitrarySCAxioms[\SCRSIZE]{$\deriv{0B}$}
          \UnaryInfC{$\ttile \Gamma'_0, B, C, \Gamma'_1$}
          \arbitrarySCRules[\SCRSIZE]{$\deriv{1}'$}
          \UnaryInfC{$\ttile B, C, \Gamma'$}
          \totimesL{}\BinaryInfC{$\ttile \Gamma, A \otimes B, C, \Gamma'$}
          \tparL{}\UnaryInfC{$\ttile \Gamma, (A \otimes B) \parr C, \Gamma'$}
          \arbitrarySCRules[\SCRSIZE]{$\deriv{2}$}
          \UnaryInfC{$\ttile S\{(A \otimes B) \parr C\}$}
        \end{bprooftree}
      \end{tikzcd}\end{center}
      }
      where $\deriv{1}'$ is $\deriv{1}$ with \exchT{} rules on $B \parr C$
      becoming two exchange rules on $B$ and $C$ respectively.
      Notice that, as one would expect, the number of cut and tensor rules do
      not change when we transform a switch rule.

    \item \dontbreak{ \sigmaUpT{}:
      \begin{center}\begin{tikzcd}[column sep = large, row sep = large]
        \tiny
        \begin{bprooftree}
          \arbitrarySCAxioms[\SCRSIZE]{$\deriv{0}$}
          \UnaryInfC{$\ttile \Gamma, A, B, \Gamma'$}
          \tparL{}\UnaryInfC{$\ttile \Gamma, A \parr B, \Gamma'$}
          \arbitrarySCRules[\SCRSIZE]{$\deriv{1}$}
          \UnaryInfC{$\ttile S\{A \parr B\}$}
        \end{bprooftree}
      \arrow[r, "\text{becomes}"] &
        \tiny
        \begin{bprooftree}
          \arbitrarySCAxioms[\SCRSIZE]{$\deriv{0}$}
          \UnaryInfC{$\ttile \Gamma, A, B, \Gamma'$}
          \texchL{}\UnaryInfC{$\ttile \Gamma, B, A, \Gamma'$}
          \tparL{}\UnaryInfC{$\ttile \Gamma, B \parr A, \Gamma'$}
          \arbitrarySCRules[\SCRSIZE]{$\deriv{1}$}
          \UnaryInfC{$\ttile S\{B \parr A\}$}
        \end{bprooftree}
      \end{tikzcd}\end{center}
    }

    \item \dontbreak{ \sigmaDownT{}:
      \begin{center}\begin{tikzcd}[column sep = large, row sep = large]
        \tiny
        \begin{bprooftree}
          \arbitrarySCAxioms[\SCRSIZE]{$\deriv{0A}$}
          \UnaryInfC{$\ttile \Gamma, A$}
          \arbitrarySCAxioms[\SCRSIZE]{$\deriv{0B}$}
          \UnaryInfC{$\ttile B, \Gamma'$}
          \totimesL{}\BinaryInfC{$\ttile \Gamma, A \otimes B, \Gamma'$}
          \arbitrarySCRules[\SCRSIZE]{$\deriv{1}$}
          \UnaryInfC{$\ttile S\{A \otimes B\}$}
        \end{bprooftree}
      \arrow[r, "\text{becomes}"] &
        \tiny
        \begin{bprooftree}
          \arbitrarySCAxioms[\SCRSIZE]{$\deriv{0B}$}
          \UnaryInfC{$\ttile B, \Gamma'$}
          \arbitrarySCRules[\SCRSIZE]{$\deriv{e}$}
          \UnaryInfC{$\ttile \Gamma', B$}
          \arbitrarySCAxioms[\SCRSIZE]{$\deriv{0A}$}
          \UnaryInfC{$\ttile \Gamma, A$}
          \arbitrarySCRules[\SCRSIZE]{$\deriv{e}$}
          \UnaryInfC{$\ttile A, \Gamma$}
          \totimesL{}\BinaryInfC{$\ttile \Gamma', B \otimes A, \Gamma$}
          \arbitrarySCRules[\SCRSIZE]{$\deriv{e}$}
          \UnaryInfC{$\ttile \Gamma, B \otimes A, \Gamma'$}
          \arbitrarySCRules[\SCRSIZE]{$\deriv{1}$}
          \UnaryInfC{$\ttile S\{B \otimes A\}$}
        \end{bprooftree}
      \end{tikzcd}\end{center}
    }

    \item \dontbreak{ \alphaUpT{}:
      \begin{center}\begin{tikzcd}[column sep = large, row sep = large]
        \tiny
        \begin{bprooftree}
          \arbitrarySCAxioms[\SCRSIZE]{$\deriv{0}$}
          \UnaryInfC{$\ttile \Gamma_B, B, C, \Gamma_C$}
          \tparL{}\UnaryInfC{$\ttile \Gamma_B, B \parr C, \Gamma_C$}
          \arbitrarySCRules[\SCRSIZE]{$\deriv{1}$}
          \UnaryInfC{$\ttile \Gamma, A, B \parr C, \Gamma'$}
          \tparL{}\UnaryInfC{$\ttile \Gamma, A \parr (B \parr C), \Gamma'$}
          \arbitrarySCRules[\SCRSIZE]{$\deriv{2}$}
          \UnaryInfC{$\ttile S\{A \parr (B \parr C)\}$}
        \end{bprooftree}
      \arrow[r, "\text{becomes}"] &
        \tiny
        \begin{bprooftree}
          \arbitrarySCAxioms[\SCRSIZE]{$\deriv{0}$}
          \UnaryInfC{$\ttile \Gamma_B, B, C, \Gamma_C$}
          \arbitrarySCRules[\SCRSIZE]{$\deriv{1}'$}
          \UnaryInfC{$\ttile \Gamma, A, B, C, \Gamma'$}
          \tparL{}\UnaryInfC{$\ttile \Gamma, A \parr B, C, \Gamma'$}
          \tparL{}\UnaryInfC{$\ttile \Gamma, (A \parr B) \parr C, \Gamma'$}
          \arbitrarySCRules[\SCRSIZE]{$\deriv{2}$}
          \UnaryInfC{$\ttile S\{(A \parr B) \parr C\}$}
        \end{bprooftree}
      \end{tikzcd}\end{center}
    }
      where $\deriv{1}'$ is $\deriv{1}$ with \exchT{} rules on $B \parr C$
      becoming two exchange rules respectively on $B$ and $C$.

    \item \dontbreak{ \alphaDownT{}:
      \begin{center}\begin{tikzcd}[column sep = large, row sep = large]
        \tiny
        \begin{bprooftree}
          \arbitrarySCAxioms[\SCRSIZE]{$\deriv{0A}$}
          \UnaryInfC{$\ttile \Gamma_A, A$}
          \arbitrarySCAxioms[\SCRSIZE]{$\deriv{0B}$}
          \UnaryInfC{$\ttile B, \Gamma_B$}
          \totimesL{}\BinaryInfC{$\ttile \Gamma_A, A \otimes B, \Gamma_B$}
          \arbitrarySCRules[\SCRSIZE]{$\deriv{1}$}
          \UnaryInfC{$\ttile \Gamma, A \otimes B$}
          \arbitrarySCAxioms[\SCRSIZE]{$\deriv{0C}$}
          \UnaryInfC{$\ttile C, \Gamma'$}
          \totimesL{}\BinaryInfC{$\ttile \Gamma, (A \otimes B) \otimes C, \Gamma'$}
          \arbitrarySCRules[\SCRSIZE]{$\deriv{2}$}
          \UnaryInfC{$\ttile S\{(A \otimes B) \otimes C\}$}
        \end{bprooftree}
      \arrow[r, "\text{becomes}"] &%
        \tiny
        \begin{bprooftree}
          \arbitrarySCAxioms[\SCRSIZE]{$\deriv{0A}$}
          \UnaryInfC{$\ttile \Gamma_A, A$}
          \arbitrarySCAxioms[\SCRSIZE]{$\deriv{0B}$}
          \UnaryInfC{$\ttile B, \Gamma_B$}
          \arbitrarySCRules[\SCRSIZE]{$\deriv{e}$}
          \UnaryInfC{$\ttile \Gamma_B, B$}
          \arbitrarySCAxioms[\SCRSIZE]{$\deriv{0C}$}
          \UnaryInfC{$\ttile C, \Gamma'$}
          \totimesL{}\BinaryInfC{$\ttile \Gamma_B, B \otimes C, \Gamma'$}
          \arbitrarySCRules[\SCRSIZE]{$\deriv{e}$}
          \UnaryInfC{$\ttile B \otimes C, \Gamma', \Gamma_B$}
          \totimesL{}\BinaryInfC{$\ttile \Gamma_A, A \otimes (B \otimes C), \Gamma', \Gamma_B$}
          \arbitrarySCRules[\SCRSIZE]{$\deriv{1}'$}
          \UnaryInfC{$\ttile \Gamma, A \otimes (B \otimes C), \Gamma'$}
          \arbitrarySCRules[\SCRSIZE]{$\deriv{2}$}
          \UnaryInfC{$\ttile S\{A \otimes (B \otimes C)\}$}
        \end{bprooftree}
      \end{tikzcd}\end{center}
      }
      where $\deriv{1}'$ is $\deriv{1}$ with \exchT{} rules on $A \otimes B$
      becoming as many exchange rules as necessary on $A \otimes (B \otimes C)$ and
      $\Gamma'$.

    \item \dontbreak{ \iUpT{}:
      \begin{center}\begin{tikzcd}[column sep = large, row sep = large]
        \tiny
        \begin{bprooftree}
          \arbitrarySCAxioms[\SCRSIZE]{$\deriv{0A}$}
          \UnaryInfC{$\ttile \Gamma_A, A$}
          \arbitrarySCAxioms[\SCRSIZE]{$\deriv{0\dual{A}}$}
          \UnaryInfC{$\ttile \dual{A}, \Gamma_{\dual{A}}$}
          \totimesL{}\BinaryInfC{$\ttile \Gamma_A, A \otimes \dual{A}, \Gamma_{\dual{A}}$}
          \arbitrarySCRules[\SCRSIZE]{$\deriv{1}$}
          \UnaryInfC{$\ttile \Gamma, A \otimes \dual{A}, B, \Gamma'$}
          \tparL{}\UnaryInfC{$\ttile \Gamma, (A \otimes \dual{A}) \parr B, \Gamma'$}
          \arbitrarySCRules[\SCRSIZE]{$\deriv{2}$}
          \UnaryInfC{$\ttile S\{(A \otimes \dual{A}) \parr B\}$}
        \end{bprooftree}
      \arrow[r, "\text{becomes}"] &
        \tiny
        \begin{bprooftree}
          \arbitrarySCAxioms[\SCRSIZE]{$\deriv{0A}$}
          \UnaryInfC{$\ttile \Gamma_A, A$}
          \arbitrarySCAxioms[\SCRSIZE]{$\deriv{0\dual{A}}$}
          \UnaryInfC{$\ttile \dual{A}, \Gamma_{\dual{A}}$}
          \tcutL{}\BinaryInfC{$\ttile \Gamma_A, \Gamma_{\dual{A}}$}
          \arbitrarySCRules[\SCRSIZE]{$\deriv{1}'$}
          \UnaryInfC{$\ttile \Gamma, B, \Gamma'$}
          \arbitrarySCRules[\SCRSIZE]{$\deriv{2}$}
          \UnaryInfC{$\ttile S\{B\}$}
        \end{bprooftree}
      \end{tikzcd}\end{center}
      }
      where $\deriv{1}'$ is $\deriv{1}$ with any \exchT{} rules affecting
      $A \otimes \dual{A}$ omitted.
      The only rules that can affect $A \otimes \dual{A}$ is \exchT{}.
      There cannot be a \cutT{} rule used on this formula that replaces
      it with an identical one, or with the atoms required to construct
      a new $A \otimes \dual{A}$ or else there would be another choice
      for the location of $\deriv{1}$ where the formula is not cut out.

      Notice that this transformation means that there is a bijection from the \iUpT{} rules to
      the corresponding \cutT{} rules.
      Additionally, this means there is never a point where the choice of the
      location of any $\ttile \Gamma, A, \Gamma'$ in the proof tree is ambiguous.
      This is what maintains the correctness of this transformation.
  \end{enumerate}
\end{proof}

It should be clear that $\transdiscalt$ has the following equalities:
\begin{align*}
  \proofsize{\proofaiDown{\deriv{}}} &=
  \proofsize{\proofax{\transdiscalt\deriv{}}},
  \\
  \proofsize{\proofiUp{\deriv{}}} &= \proofsize{\proofcut{\transdiscalt\deriv{}}} \text{ and}
  \\
  \proofsize{\transdiscalt\proofax{\deriv{}}} - 1 =
  \proofsize{\proofaiDown{\deriv{}}} - 1 &=
  \proofsize{\proofotimes{\transdiscalt\deriv{}}} + \proofsize{\proofcut{\transdiscalt\deriv{}}}
  .
\end{align*}
Therefore, $\transdiscalt$ is a more faithful transformation than $\transdisc$
as it preserves the number of axiom and cut rules.
Note that the transformations are still not inverses. We can arbitrarily
construct a deep inference proof where the number of switch rules is zero
while the number of axiomatic rules are large. Transforming the proof
to sequent calculus and back would result in a large number of switch rules.
However, proofs that start in sequent calculus, or deep inference proofs
that were translated from sequent calculus first, are in a kind of
``normal form'' where the only changes are the bureaucratic \exchT{},
\alphaDownT{}, \alphaUpT{},
\sigmaDownT{} and \sigmaUpT{} rules.
So it could informally be said that $\transscdi{}$ and the composition
${\transdiscalt{} \after \transscdi{} \after \transdiscalt{}}$
are ``almost inverse''.

In sequent calculus, formulae are internally immutable and can only be
externally combined with other formulae to create larger ones.
In computer science terms, the formulae and proof tree are persistent data
structures, and the inference rules are akin to pure functions.
Meanwhile, deep inference works with a single internally-mutable formula,
corresponding to how functions work in ``normal'' imperative programming languages.
This appears to be why the $\mllm{SC}$ to $\mllm{DI}$ transformation proof is
shorter and easier to reason about.
Considering how a sequent calculus derivation tree can be mutated into another
one is what allows us to see the connection between the two systems.

\subsection{Cut Elimination}

We provide one final transformation for a cut elimination procedure in sequent
calculus called $\transcut{}$. We do not go into full detail here as it is
rather laborious; full details for this procedure for full linear logic can be
found in \cite{pfenningcut_elim, mellies}, while a sketch specific to
$\mllm{SC}$ can be found in \cite{proof_nets}.

\begin{proposition}\label{equiv_cut_elimination}
  If $\deriv{}$ is a derivation that contains the \cutT{} rule in
  $\mllm{SC}$ then so is $\transcut\deriv{}$.
\end{proposition}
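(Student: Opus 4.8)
The plan is to proceed by induction on the rewriting sequence that defines $\transcut$. Recall that a cut-elimination procedure for $\mllm{SC}$ (as sketched in the cut-elimination literature referred to above) is built from three families of local reductions applied to a topmost \cutT{}: the \emph{commutative} steps, which permute a \cutT{} upward past an adjacent rule that does not act on the cut formula; the \emph{principal} multiplicative step, which replaces a \cutT{} on $A \tensor B$ against its dual $\dual{A} \parr \dual{B}$ by two \cutT{}s, one on $A$ and one on $B$; and the \emph{axiom} step, which contracts a \cutT{} against an \axT{} axiom (equivalently \idT{}, via \cref{scatomsonly}). It then suffices to track the counter $\proofsize{\proofcut{\cdot}}$ along this sequence and to show that, starting from a derivation with $\proofsize{\proofcut{\deriv{}}} \geq 1$, the property of carrying at least one \cutT{} is maintained by $\transcut$.

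First I would dispatch the two easy families. A commutative step merely relocates the distinguished \cutT{} to a higher position in the derivation tree, leaving $\proofsize{\proofcut{\cdot}}$ unchanged; a principal multiplicative step is even more favourable, since the De Morgan law $\dual{A \tensor B} = \dual{A} \parr \dual{B}$ recorded in \cref{systems} guarantees that the two replacement cuts on $A$ and $B$ are well-formed, so that $\proofsize{\proofcut{\cdot}}$ strictly increases by one. Hence neither of these families can make the cut count drop, and the principal case in fact manufactures new cuts on strictly smaller formulae.

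The delicate point, and the step I expect to be the main obstacle, is the axiom step, since it is the only reduction capable of \emph{removing} a \cutT{}. The plan here is to argue that such a contraction never eliminates the last surviving cut while a compound cut formula is still present: by the time an axiom step can fire on an atomic cut $\dual{a}, a$, the principal reductions that produced that atomic cut will already have spawned a companion cut on the sibling subformula, so contracting one leaves the other in place. Making this simultaneity precise—controlling the order in which axiom contractions become enabled relative to the principal steps that generate them, and using the counting identity $\proofsize{\proofcut{\deriv{}}} + \proofsize{\proofotimes{\deriv{}}} = \proofsize{\proofax{\deriv{}}} + \proofsize{\proofid{\deriv{}}} - 1$ as a consistency check that no spurious cancellation of cuts occurs—is the crux of the argument and the place where the bookkeeping must be carried out most carefully.
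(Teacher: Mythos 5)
There is a genuine gap here, and it starts with a misreading of the statement. Despite the awkward wording, the proposition does not assert that $\transcut\deriv{}$ still \emph{contains} a \cutT{} rule; it asserts that $\transcut\deriv{}$ is still a \emph{derivation} in $\mllm{SC}$, i.e.\ that the cut-elimination procedure transforms a valid derivation (one containing cuts) into a valid derivation of the same sequent. The paper makes this unmistakable immediately after the proposition, where it records as ``the obvious fact'' that $\proofsize{\proofcut{\transcut{}\deriv{}}} = 0$: the output of $\transcut$ contains \emph{no} cuts at all. Your whole argument is organised around the opposite goal --- showing that the property of carrying at least one \cutT{} is maintained --- and that claim is simply false, since destroying it is precisely the purpose of $\transcut$.

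This is why the step you yourself flag as the crux cannot be carried out. You hope to show that an axiom contraction ``never eliminates the last surviving cut'', but it does, and must: the procedure terminates exactly when every cut has been commuted upward or decomposed by principal steps and finally contracted against an axiom, and the last such contraction takes the cut count from one to zero. The counting identity offers no protection either: it holds in every valid $\mllm{SC}$ derivation, and with zero cuts it specialises to $\proofsize{\proofax{\transcut{}\deriv{}}} + \proofsize{\proofid{\transcut{}\deriv{}}} = \proofsize{\proofotimes{\transcut{}\deriv{}}} + 1$, which is exactly the equality the paper states after the proposition --- so there is no inconsistency to exploit. What the proposition actually requires, and what the paper's sketched proof supplies, is a soundness check for each local reduction: a case analysis on the rule above the (left) premise of a topmost \cutT{} --- \axT{}/\idT{}, then \parT{} or \otimesT{} applied to a formula that is not being cut (the commutative cases), then \parT{} or \otimesT{} applied to the cut formula itself (the principal cases) --- verifying in each case that the rewritten figure is again a well-formed $\mllm{SC}$ derivation of the same end sequent. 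Your taxonomy of reduction steps matches these cases, so the skeleton of your induction is usable; but it must be redirected at proving validity of the output derivation, not at preserving a positive cut count.
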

\begin{proof}
  \newcommand{\AxiomSize}{0.25}
  \newcommand{\AX}{\arbitrarySCAxioms[\AxiomSize]{}}
  (Sketch)
  \\
  Cut elimination is the process of continuously moving a \cutT{} rule upwards
  in a proof until the \cutT{} disappears. For $\mllm{SC}$ there are five main cases to cover.
  Without loss of generality we only consider the left parent of the \cutT{} rule.
  For simplicity in this sketch we ignore any \exchT{} rules in the parents,
  as they will be ignored anyway in our categorical model.
  Additionally, following the \cutT{} rule we abbreviate multiple \exchT{} rules into a single \exchT{}.
  Consider the cases where the left parent rule ignoring any \exchT{} rules is:
  \begin{itemize}
    \item \dontbreak{ \axT{} or \idT{}:
      \\
      \begin{center} \begin{tikzcd}[column sep = large, row sep = large]
      \tiny
      \begin{bprooftree}
        \AxiomC{}
          \tidL{}\UnaryInfC{$\ttile \dual{A}, A$}
          \AX\UnaryInfC{$\ttile \dual{A}, \Gamma$}
        \tcutL\BinaryInfC{$\ttile \dual{A}, \Gamma$}
      \end{bprooftree}
        \arrow[r, "\text{becomes}"] &
      \tiny
      \begin{bprooftree}
        \AX\UnaryInfC{$\ttile \dual{A}, \Gamma$}
      \end{bprooftree}
      \end{tikzcd} \end{center}
    }
    \item \dontbreak { \parT{} on the formula we do not cut:
      \\
      \begin{center} \begin{tikzcd}[column sep = large, row sep = large]
      \tiny
      \begin{bprooftree}
        \AX
          \UnaryInfC{$\ttile \Gamma'_0, A$}
          \tparL{}\UnaryInfC{$\ttile \Gamma_0, A$}
        \AX
          \UnaryInfC{$\ttile \dual{A}, \Gamma_1$}
        \tcutL{}\BinaryInfC{$\Gamma_0, \Gamma_1$}
      \end{bprooftree}
      \arrow[r, "\text{becomes}"] &
      \tiny
      \begin{bprooftree}
        \AX
          \UnaryInfC{$\Gamma'_0, A$}
        \AX
          \UnaryInfC{$\dual{A}, \Gamma_1$}
        \tcutL{}\BinaryInfC{$\Gamma'_0, \Gamma_1$}
        \tparL{}\UnaryInfC{$\Gamma_0, \Gamma_1$}
      \end{bprooftree}
      \end{tikzcd} \end{center}
    }
    \item \dontbreak{ \otimesT{} on the formula we do not cut:
      \\
      \begin{center} \begin{tikzcd}[column sep = large, row sep = large]
        \tiny
        \begin{bprooftree}
          \AX
            \UnaryInfC{$\ttile \Gamma_0$}
          \AX
            \UnaryInfC{$\ttile \Gamma_1, A$}
          \totimesL{}\BinaryInfC{$\ttile \Gamma_2, A$}
          \AX
            \UnaryInfC{$\ttile \dual{A}, \Gamma_3$}
          \tcutL{}\BinaryInfC{$\ttile \Gamma_2, \Gamma_3$}
        \end{bprooftree}
        \arrow[r, "\text{becomes}"] &
        \tiny
        \begin{bprooftree}
          \AX
            \UnaryInfC{$\ttile \Gamma_0$}
          \AX
            \UnaryInfC{$\ttile \Gamma_1, A$}
          \AX
            \UnaryInfC{$\ttile \dual{A}, \Gamma_3$}
          \tcutL{}\BinaryInfC{$\ttile \Gamma_1, \Gamma_3$}
          \totimesL{}\BinaryInfC{$\ttile \Gamma_2, \Gamma_3$}
        \end{bprooftree}
      \end{tikzcd} \end{center}
    }
    \item \dontbreak{ \parT{} on the formula we cut:
      \\
      \begin{center} \begin{tikzcd}[column sep = large, row sep = large]
        \tiny
        \begin{bprooftree}
          \AX
            \UnaryInfC{$\ttile \Gamma_0, A, B$}
            \tparL{}\UnaryInfC{$\ttile \Gamma_0, A \parr B$}
          \AX
            \UnaryInfC{$\ttile \Gamma_1, \dual{A}$}
          \AX
            \UnaryInfC{$\ttile \dual{B}, \Gamma_2$}
          \totimesL{}\BinaryInfC{$\ttile \Gamma_1, \dual{A} \otimes \dual{B}, \Gamma_2$}
          \arbitrarySCRules[\AxiomSize]{}
          \UnaryInfC{$\ttile \dual{A} \otimes \dual{B}, \Gamma_3$}
          \tcutL{}\BinaryInfC{$\ttile \Gamma_0, \Gamma_3$}
        \end{bprooftree}
        \arrow[r, "\text{becomes}"] &
        \tiny
        \begin{bprooftree}
          \AX
            \UnaryInfC{$\ttile \Gamma_1, \dual{A}$}
          \AX
            \UnaryInfC{$\ttile \Gamma_0, A, B$}
          \AX
            \UnaryInfC{$\ttile \dual{B}, \Gamma_2$}
          \tcutL{}\BinaryInfC{$\ttile \Gamma_0, A, \Gamma_2$}
          \texchL{}\UnaryInfC{$\ttile A, \Gamma_0, \Gamma_2$}
          \tcutL{}\BinaryInfC{$\ttile \Gamma_1, \Gamma_0, \Gamma_2$}
          \arbitrarySCRules[\AxiomSize]{}
          \UnaryInfC{$\ttile \Gamma_0, \Gamma_3$}
        \end{bprooftree}
      \end{tikzcd} \end{center}
    }
    \item \dontbreak{ \otimesT{} on the formula we cut:
      \\
      This is the same as for the \parT{} rule but with negations on $A$ and $B$
      and the branches swapped.
    }
  \end{itemize}
\end{proof}

The main property of proofs after this transformation
resulting from the obvious fact that
\[ \proofsize{\proofcut{\transcut{}\deriv{}}} = 0 \]
is that
\[ \proofsize{\proofax{\transcut{}\deriv{}}} +
\proofsize{\proofid{\transcut{}\deriv{}}} =
\proofsize{\proofotimes{\transcut{}\deriv{}}} + 1. \]

\section{Modelling derivations}\label{models}

We describe how to model derivations as cliques in some coherence
space. We begin by looking at an alternative description of coherence
spaces before giving details for modelling both systems. We conclude
by showing that our interpretations are preserved by the translation
processes defined in the previous section.

\subsection{Concordance spaces}\label{concord}

In \cite{doubleglueing}, in particular Example~5.15~(3), an
alternative description of coherence spaces is proposed. Instead of
presenting coherence spaces via a binary relation we provide an
axiomatization via the sets of cliques and anticliques.

We find it less confusing to introduce a new name for this
axiomatization and then to show that there is a suitable isomorphism
to provide a connection with coherence spaces.

\begin{definition}
  Let $R$ be a set. If $u$ and $x$ are subsets of $R$ we say that
  \textbf{$u$ is orthogonal to $x$}, $u\perp x$, if and only if their
  intersection is at most a singleton, that is
\[r,r'\in u\cap x\qquad\text{implies}\qquad r=r'.\]
\end{definition}

This is an orthogonality relation in the sense of \cite{doubleglueing}.
Given a subset $U$ of $\pow R$ we may generate another such set
as\footnote{We avoid the obvious notation $U^\perp$ for fear of
  confusion with logical negation.}
\[\orth{U}=\{x\subseteq R\suchthat \text{for all }u\in U.\, u\perp x\}.\]
We note that this provides an operation on $\pow R$ which is
order-reversing, that is, if $U\subseteq U'$ then
$\orth{U}\supseteq\orth{U'}$. Applying the operator twice gives a
closure operator on the ordered set~$\pow R$, that is for subsets $U$
and $V$ of $\pow R$ we have
\begin{itemize}
\item $U\subseteq\dorth{U}$,
\item $U\subseteq V$ implies $\dorth{U}\subseteq\dorth{V}$ and
\item $\dorth{\dorth{U}} =\dorth{U}$.
\end{itemize}

We define spaces whose structure is given by a set together with two
subsets of its powerset, linked by the above notion of orthogonality
extended to sets. We show below that this gives us a category that is
isomorphic to the category of coherence spaces as described in
\cref{llmodels}.

\begin{definition}
  A \textbf{concordance space} $(R,U,X)$ consists of a set $R$ and
  subsets $U$, $X$ of the powerset $\pow R$ such that 
\[U=\orth{X}\qquad\text{and}\qquad X=\orth{U}.\]
\end{definition}

Given a coherence space we obtain a concordance space by taking $R$
to be the underlying set, $U$ the set of cliques, and $X$ the set of
anticliques. To see that this results in a concordance space note
that cliques and anticliques intersect in at most one element, and so
we clearly have $U\subseteq\orth{X}$ and $X\subseteq\orth{U}$. To see
that we also have the converse inclusions note that if $u$ is a
subset of $R$ with the property that it intersects every anticlique
in at most one element then it has to be a clique---any two of its
elements which are distinct are therefore not incoherent, and so they
must be coherent. Consequently we use \emph{clique}\/ to refer to the
elements of $U$ and \emph{anticlique}\/ to refer to elements of~$X$.

We aim to model derivations in our systems as cliques,
so having a presentation of coherence spaces which puts cliques at
the centre is helpful.

\begin{proposition}\label{cohprop}
  If $(R,U,X)$ is a concordance space then the following properties
  hold:
  \begin{enumerate}
  \item For all $r\in R$ we have $\{r\}\in U$ and $\{r\}\in X$, that
    is, every singleton is a clique as well as an anticlique.
  \item Both $U$ and $X$ are downward closed as subsets of~$\pow R$,
    that is, for example, if $u\in U$ and $u'\subseteq u$ then $u'\in
    U$. In other words, every subset of a clique is a clique, and the
    corresponding statement holds for anticliques.
  \item We have $U=\dorth{U}$ and $X=\dorth{X}$.
  \item For all $r,r'\in R$ we have $\{r,r'\}\in U$ or $\{r,r'\}\in
    X$, and if $r\not= r'$ this is an exclusive or. This means that
    every two element subset is either a clique or an anticlique.
  \item We have that $u\in U$ if and only if for all $r,r'\in u$ we
    know $\{r,r'\}\in U$, and similarly for~$X$.
  \end{enumerate}
\end{proposition}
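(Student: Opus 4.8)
The plan is to prove the five properties essentially in the order listed, exploiting that each later item can lean on the earlier ones. Properties (i)--(iii) follow immediately from unwinding the definitions of \(\orth{(-)}\) and \(\perp\). For (i), I would observe that a singleton \(\{r\}\) (and indeed the empty set) meets every subset of \(R\) in at most one element, so \(\{r\}\perp y\) holds for \emph{every} \(y\subseteq R\); hence \(\{r\}\in\orth{X}=U\) and \(\{r\}\in\orth{U}=X\). For (ii), if \(u\in U=\orth{X}\) and \(u'\subseteq u\), then for any \(x\in X\) we have \(u'\cap x\subseteq u\cap x\), and the latter is at most a singleton, so \(u'\perp x\); as \(x\) was arbitrary, \(u'\in\orth{X}=U\), and the argument for \(X\) is symmetric. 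Property (iii) is purely formal: since \(\orth{U}=X\) and \(\orth{X}=U\) by the definition of a concordance space, \(\dorth{U}=\orth{(\orth{U})}=\orth{X}=U\), and dually \(\dorth{X}=X\).

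For (iv) I would first dispose of the exclusive part. If \(r\neq r'\) and \(\{r,r'\}\) belonged to both \(U=\orth{X}\) and \(X\), then taking the witness \(x=\{r,r'\}\in X\) in the defining condition for \(\orth{X}\) forces \(\{r,r'\}\perp\{r,r'\}\), i.e.\ \(\{r,r'\}\cap\{r,r'\}=\{r,r'\}\) to be at most a singleton --- a contradiction. For the inclusive ``or'', suppose \(\{r,r'\}\notin U=\orth{X}\). Then some \(x\in X\) satisfies \(\{r,r'\}\not\perp x\), meaning \(\{r,r'\}\cap x\) contains two distinct elements; since \(\{r,r'\}\) has at most the elements \(r,r'\), this forces \(\{r,r'\}\subseteq x\), whence \(\{r,r'\}\in X\) by downward closure (ii). The degenerate case \(r=r'\) is handled by (i).

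Finally, (v) combines the previous items. The forward direction is immediate from downward closure: if \(u\in U\) then every two-element subset \(\{r,r'\}\subseteq u\) lies in \(U\). For the converse I would assume every \(\{r,r'\}\) with \(r,r'\in u\) lies in \(U\), take an arbitrary \(x\in X\), and argue that if \(u\cap x\) contained two distinct elements \(r,r'\), then \(\{r,r'\}\in U\) by hypothesis while also \(\{r,r'\}\subseteq x\) gives \(\{r,r'\}\in X\) by (ii), contradicting the exclusivity established in (iv). Hence \(u\cap x\) is at most a singleton for every \(x\in X\), i.e.\ \(u\in\orth{X}=U\); the statement for \(X\) is dual.

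I do not anticipate a serious obstacle, as every step is a short manipulation of the orthogonality relation. The only point requiring care --- and the reason the items must be sequenced as above --- is that (iv) and (v) genuinely rely on downward closure (ii), and that the backward direction of (v) needs the \emph{exclusivity} half of (iv) rather than merely the ``or''. Keeping these dependencies straight is really the entire content of the argument.
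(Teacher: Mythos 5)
Your proof is correct and follows essentially the same route as the paper's own proof sketch: singletons are orthogonal to every subset, downward closure comes from monotonicity of intersection under $\perp$, (iii) is a formal manipulation of $\orth{(-)}$, and (iv)--(v) lean on (ii) in the same way. Yours is in fact marginally more complete --- you prove the exclusivity claim in (iv) explicitly, which the paper's sketch leaves implicit, and your converse direction of (v) routes the contradiction through that exclusivity, where the paper instead applies orthogonality of $\{r,r'\}\in U$ against $x\in X=\orth{U}$ directly; these are interchangeable one-line variants of the same argument.
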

\begin{proof}[Proof sketch]
  Clearly a singleton has the property that the intersection with
  every other set is at most a singleton, so for any subset $S$ of
  $\pow R$ we have that every singleton is an element of $\orth{S}$.
  Hence every singleton is an element of both, $U=\orth{X}$ and $X=\orth{U}$.
  Given $u\in U$ and a subset $u'$ of $u$ we can see that the
  intersection of $u'$ with every $x\in X$ is a subset of $u\cap x$
  which is at most a singleton, so $u'\in\orth{X}=U$. It is easy to
  show that applying the operator $\orth{(-)}$ once is the same as
  applying it three times, which implies~(iii). For (iv), if
  $\{r,r'\}$ is not in $U$ then none of its supersets are in $U$
  either by (ii) and thus its intersection with every element
  of $U$ is at most a singleton, so $\{r,r'\}\in\orth{U}=X$. For (v)
  the only if direction follows from (ii), so assume we have a subset
  $u$ of $R$ such that for all $r,r'\in u$ we know $\{r,r'\}\in U$.
  Let $x$ be an arbitrary element of~$X$. Assume that
  $r,r'\in u\cap x$, so $r,r'\in \{r,r'\}\cap x$, and since
  $x\in X=\orth{U}$ this must imply $r=r'$ as required.
\end{proof}

The previous proposition indicates how to take one of our concordance
spaces and translate it into a coherence space: take the relation
that relates $r$ to $r'$ if and only if $\{r,r'\}\in U$. This is a
symmetric reflexive relation by construction. It is not hard to check
that our assignments between concordance and coherence spaces are
mutually inverse.

The morphisms in our categorical model are relations and we need to establish
some notation for these. If we have a relation $f$ from a set $R$ to
a set $S$, then for a subset $u$ of $R$ we use the following notation
for its `direct image' under $f$:
\[[u]f=\{s\in S\suchthat\exists r\in u.\,r\mathrel{f}s\}.\]
Similarly, for a subset $y$ of $S$, we use the following:
\[f[y]=\{r\in R\suchthat\exists s\in y.\, \,r\mathrel{f}s\}\]
for its `inverse image' under~$f$.

Morphisms between concordance spaces have to be well-behaved with
respect to cliques and anticliques. They have to map cliques from the
source to cliques in the target, and anticliques from the target to
anticliques for the source.

\begin{definition}
  A \textbf{morphism  of concordance spaces} from $(R,U,X)$ to
  $(S,V,Y)$ is given by a relation from $R$ to $S$ such that 
  \begin{itemize}
  \item for all $u\in U$ we have $[u]F\in V$ and
  \item for all $y\in Y$ we have $F[y]\in X$.
  \end{itemize}
\end{definition}
This gives us a category $\concat$, with composition and identities
inherited from the category of sets and relations.

If we have a morphism of coherence spaces it is easy to check that it
will map cliques of the source to cliques of the target, and
anticliques of the target to anticliques of the source, and so be a
morphism of the corresponding concordance spaces.

If, on the other hand, we have a morphism of concordance spaces then
it has to map two-element cliques of the source to cliques of the
target, which means it preserves the corresponding coherence
relation. It also has to map two-element anticliques of the target to
anticliques of the source, which means it preserves the incoherence
relation of the corresponding coherence spaces. Hence we have
the following result.

\begin{theorem}
  The categories $\cohcat$ and $\concat$ are isomorphic.
\end{theorem}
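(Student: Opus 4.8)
The plan is to exhibit explicit functors in both directions and check that they are strictly mutually inverse. Write $\Phi\colon\cohcat\to\concat$ for the assignment sending a coherence space $(R,\coheq{})$ to the concordance space $(R,U,X)$ whose $U$ is the set of cliques and whose $X$ is the set of anticliques; this lands in $\concat$ by the discussion preceding \cref{cohprop}. Write $\Psi\colon\concat\to\cohcat$ for the assignment sending $(R,U,X)$ to the coherence space on $R$ whose coherence relation relates $r$ and $r'$ exactly when $\{r,r'\}\in U$, which is symmetric and reflexive by \cref{cohprop}, part (i). On morphisms both $\Phi$ and $\Psi$ are declared to act as the identity on the underlying relation: the point I would stress is that a relation $F$ between the underlying sets satisfies the defining conditions for a morphism of coherence spaces if and only if it satisfies those for a morphism of the corresponding concordance spaces, so for corresponding objects the two hom-sets are literally the same set of relations. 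This biconditional is precisely what was verified in the two paragraphs immediately before the theorem.

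First I would check that $\Phi$ and $\Psi$ are mutually inverse on objects. Starting from a coherence space and applying $\Psi\after\Phi$, the recovered coherence relation relates $r$ and $r'$ iff $\{r,r'\}$ is a clique, which holds iff $r\coheq{}r'$ (using reflexivity when $r=r'$); hence $\Psi\after\Phi$ is the identity on coherence spaces. Starting from a concordance space $(R,U,X)$ and applying $\Phi\after\Psi$, I obtain $(R,U',X')$ with $U'$ the cliques and $X'$ the anticliques of $\Psi(R,U,X)$. By \cref{cohprop}, part (v), a set lies in $U$ iff all its two-element subsets do, i.e.\ iff it is a clique, so $U'=U$; the dual form of part (v), together with part (iv) which forces $\{r,r'\}\in X$ exactly when $\{r,r'\}\notin U$ for $r\neq r'$, gives $X'=X$ in the same way. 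Hence $\Phi\after\Psi$ is the identity on concordance spaces.

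It then remains to confirm functoriality and that the morphism assignments are mutually inverse, but both are immediate from the design above. Since $\Phi$ and $\Psi$ leave the underlying relation of a morphism unchanged, each composite acts as the identity on morphisms as well as on objects. Functoriality is free because both $\cohcat$ and $\concat$ inherit their composition and identity morphisms from the category $\rels$ of sets and relations, and neither functor alters the underlying set of an object nor the underlying relation of a morphism; thus identities go to identities and composites to composites on the nose. Combining the object-level and morphism-level statements shows that $\Phi$ and $\Psi$ are inverse isomorphisms of categories.

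I would expect the only genuine content to be the object-level verification that the two constructions undo one another, and in particular the equality $X'=X$, which is where one must lean on parts (iv) and (v) of \cref{cohprop}. Everything concerning morphisms and functoriality collapses to a triviality once one observes that both categories are concrete over $\rels$ with the structure-forgetting functor acting as the identity, so the main obstacle is really just book-keeping around \cref{cohprop} rather than any substantive new argument.
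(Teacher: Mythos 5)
Your proposal is correct and follows essentially the same route as the paper: the paper's ``proof'' consists of the surrounding discussion (the clique/anticlique construction, the two-element-clique reconstruction of the coherence relation, and the two paragraphs establishing that the morphism conditions coincide), with the mutual inverseness of the object assignments left as ``not hard to check.'' You have simply made that last check explicit via parts (i), (iv) and (v) of \cref{cohprop}, which is exactly the verification the paper intends.
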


Hence the category $\concat$ is also $*$-autonomous, and below we
describe the action of the various connectives.

For this purpose, given $U\subseteq \pow R$ and $V\subseteq\pow S$ we
set
\[U\tensor V=\{u\times v\subseteq R\times S\suchthat u\in U\text{ and }v\in V\}.\]

Assume that $(R,U,X)$ and $(S,V,Y)$ are concordance spaces. Then we
have the following constructions:
\begin{itemize}
\item $\dual{(R,U,X)} = (R,X,U)$.
\item $(R,U,X)\tensor (S,V,Y)=(R\times S, \dorth{(U\tensor
    V)},\orth{(U\tensor V)})$.
\item $(R,U,X)\llpar (S,V,Y)=(R\times S, \orth{(X\tensor
    Y)},\dorth{(X\tensor Y)})$.
\item $(R,U,X)\limp (S,V,Y)=(R\times S, \orth{(U\tensor
    Y)},\dorth{(U\tensor Y)})$.
\end{itemize}
These constructions satisfy the same DeMorgan laws as our formulae,
and they do so as equalities rather than isomorphisms.

We note in particular that we no longer rely on the DeMorgan dual to
define $\llpar$ of two spaces. indeed, the above gives us some idea
how to construct `basic anticliques', which we may then add to by
applying the closure operator $\dorth{(-)}$.

There is only one concordance space on a singleton set, and any such
provides a unit for the monoidal structures given by $\tensor$
or~$\llpar$. We use $I$ for the concordance space
$(\{*\},\pow\{*\},\pow\{*\})$.

In particular the constructions $\tensor$ and $\llpar$ may be
extended to morphisms in such a way that they give symmetric monoidal
structures on~$\concat$. We do this by mapping two relations to their
Cartesian product for both constructions. The structural isomorphisms for symmetry,
associativity and unit are given by the underlying structural
isomorphisms for product on the category of sets and
functions.\footnote{More formally we may state that there is a
  forgetful functor to the category of sets and relations that maps
  the monoidal structure given by $\tensor$ and $\llpar$ to the
  monoidal structure given by Cartesian product, but we do not expect
  the reader to be familiar with the category of sets and relations.}

We give a few helpful results about morphisms without proof. These
are straightforward to establish.

\begin{proposition}
  A relation $F$ from $R$ to $S$ is a morphism from $(R,U,X)$ to
  $(S,V,Y)$ if and only if, viewed as a subset of $R\times S$, it is
  an element of $\orth{(U\tensor Y)}$.
\end{proposition}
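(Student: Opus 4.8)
The plan is to unfold both sides of the biconditional using the defining equalities of a concordance space, reducing everything to assertions that certain intersections are at most singletons, and then to match the two sides up pointwise over pairs $(u,y)\in U\times Y$.

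First I would rewrite the two morphism clauses. Since $(S,V,Y)$ and $(R,U,X)$ are concordance spaces we have $V=\orth{Y}$ and $X=\orth{U}$. Hence the requirement $[u]F\in V$ is, for each fixed $u$, equivalent to asking that $[u]F\perp y$ for every $y\in Y$, that is, that $[u]F\cap y$ is at most a singleton; and dually $F[y]\in X$ unfolds to $F[y]\cap u$ being at most a singleton for every $u\in U$. So $F$ is a morphism if and only if, for all $u\in U$ and all $y\in Y$, both $[u]F\cap y$ and $F[y]\cap u$ contain at most one element. On the other side, $F\in\orth{(U\tensor Y)}$ means precisely that $(u\times y)\perp F$ for all $u\in U$ and $y\in Y$, i.e.\ that $(u\times y)\cap F$ is at most a singleton. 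The key observation tying these together is that
\[(u\times y)\cap F=\{(r,s)\suchthat r\in u,\; s\in y,\; r\mathrel{F}s\},\]
whose first projection is exactly $u\cap F[y]$ and whose second projection is exactly $y\cap[u]F$.

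For each fixed pair $(u,y)$ the forward implication is then immediate: a set with at most one element has projections each containing at most one element, so orthogonality to $U\tensor Y$ yields both morphism conditions. The converse is the only step that needs an argument, and it is short. If $(u\times y)\cap F$ contained two distinct pairs $(r_1,s_1)\neq(r_2,s_2)$, they must differ in at least one coordinate: either $r_1\neq r_2$, in which case $u\cap F[y]$ has two distinct elements, or $s_1\neq s_2$, in which case $y\cap[u]F$ has two distinct elements—in either case one of the morphism conditions fails. Since this equivalence holds separately for every $(u,y)$, quantifying over all $u\in U$ and $y\in Y$ gives the proposition. I do not expect any real obstacle here; the only thing to keep straight is pairing each morphism clause with the correct orthogonal complement ($\orth{Y}$ for the direct-image condition, $\orth{U}$ for the inverse-image condition) when unfolding the definitions.
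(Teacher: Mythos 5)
Your proof is correct. The paper in fact offers no proof of this proposition at all---it is introduced with ``We give a few helpful results about morphisms without proof. These are straightforward to establish.''---so there is nothing to compare against; your argument is precisely the straightforward unfolding the authors had in mind. The one genuinely useful observation, that the two projections of $(u\times y)\cap F$ are exactly $u\cap F[y]$ and $y\cap[u]F$, is stated and used correctly in both directions (projections of an at-most-singleton set are at-most-singletons; conversely two distinct pairs must differ in some coordinate), and you pair each morphism clause with the correct orthogonal, $V=\orth{Y}$ and $X=\orth{U}$.
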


\begin{proposition}\label{dualmorphism}
  If $f:(R,U,X) \rightarrow (S,V,Y)$ is a morphism
  then there exists a dual morphism
  $\dual{f}:\dual{(S,V,Y)} = (S,Y,V) \rightarrow (R,X,U) = \dual{(R,U,X)}$.
\end{proposition}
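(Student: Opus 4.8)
The plan is to take $\dual{f}$ to be the \emph{converse relation} of $f$, that is the relation from $S$ to $R$ with $s \mathrel{\dual{f}} r$ precisely when $r \mathrel{f} s$, and then to verify directly that this is a morphism from $(S,Y,V)$ to $(R,X,U)$. This is the natural candidate: dualizing a concordance space swaps its cliques with its anticliques, and in the category of sets and relations the dual of a morphism is its converse.

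First I would record how the image operations behave under passing to the converse. For a subset $y \subseteq S$ one has $[y]\dual{f} = f[y]$, since a point $r$ lies in the direct image of $y$ under $\dual{f}$ exactly when there is some $s \in y$ with $s \mathrel{\dual{f}} r$, i.e.\ with $r \mathrel{f} s$, which is the defining condition for $r \in f[y]$. Dually, for $u \subseteq R$ one has $\dual{f}[u] = [u]f$, by the same unwinding.

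Next I would unwind what it means for $\dual{f}$ to be a morphism $(S,Y,V) \to (R,X,U)$. Since the source $(S,Y,V) = \dual{(S,V,Y)}$ has $Y$ as its set of cliques and $V$ as its set of anticliques, and the target $(R,X,U) = \dual{(R,U,X)}$ has $X$ as its cliques and $U$ as its anticliques, the two defining clauses demand that $[y]\dual{f} \in X$ for every $y \in Y$ and that $\dual{f}[u] \in V$ for every $u \in U$. Substituting the identities from the previous step turns these into $f[y] \in X$ for all $y \in Y$ and $[u]f \in V$ for all $u \in U$, which are exactly the two conditions (in the opposite order) witnessing that $f$ is a morphism $(R,U,X) \to (S,V,Y)$. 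Hence $\dual{f}$ is a morphism of concordance spaces, as claimed.

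There is no real obstacle here; the content is entirely definitional. The only thing requiring care is the bookkeeping: one must keep track of the fact that forming the dual of a space interchanges the roles of clique and anticlique, while passing to the converse relation interchanges direct and inverse images, and that these two interchanges cancel so that the morphism conditions for $\dual{f}$ coincide with those for $f$.
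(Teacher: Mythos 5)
Your proposal is correct, and in fact the paper offers no proof at all of this proposition: it is listed among the ``helpful results about morphisms'' stated without proof as straightforward to establish. Your converse-relation construction is exactly the intended argument---the two image identities $[y]\dual{f}=f[y]$ and $\dual{f}[u]=[u]f$ reduce the morphism conditions for $\dual{f}:(S,Y,V)\to(R,X,U)$ to those for $f$, and this choice of $\dual{f}$ is the one consistent with the paper's subsequent corollary relating $\dual{f}$ to $\opcat{f}$; alternatively one could note, via the paper's preceding proposition, that $f\in\orth{(U\tensor Y)}$ as a subset of $R\times S$ if and only if its converse lies in $\orth{(Y\tensor U)}$ as a subset of $S\times R$, since orthogonality only depends on the cardinality of intersections.
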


\begin{corollary}
  If $f:(R,U,X) \rightarrow (S,V,Y)$ is a morphism
  then $\dual{\opcat{f}} = \opcat{(\dual{f})}$.
\end{corollary}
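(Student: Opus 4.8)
The plan is to push everything down to the level of underlying relations, where both operations act transparently. First I would extract from the construction underlying \cref{dualmorphism} the precise description of the dual morphism: if $f \subseteq R \times S$ is a morphism $(R,U,X) \to (S,V,Y)$, then $\dual f$ is carried by the \emph{converse}\/ relation $\check f = \{(s,r) \suchthat (r,s) \in f\} \subseteq S \times R$. Indeed, by the characterisation of morphisms we have $f \in \orth{(U \tensor Y)}$, and since $\lvert f \cap (u \times y)\rvert = \lvert \check f \cap (y \times u)\rvert$ for every $u \in U$ and $y \in Y$, the converse relation lies in $\orth{(Y \tensor U)}$ and so is a morphism $\dual{(S,V,Y)} = (S,Y,V) \to (R,X,U) = \dual{(R,U,X)}$. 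Thus $\dual{(-)}$ acts on the underlying relation by taking the converse and on the objects by exchanging the clique and anticlique sets.

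Next I would record that passing to the opposite category, $\opcat{(-)}$, leaves the underlying relation of a morphism untouched: it merely exchanges the roles of source and target. (One cannot realise $\opcat{(-)}$ by the converse instead, since $\check f$ need not land in $\orth{(V \tensor X)}$, so it is genuinely the identity on relation data.) Consequently $\opcat f$ and $f$ are carried by the same subset of $R \times S$, while $\opcat{\dual f}$ and $\dual f$ are carried by the same converse relation $\check f$.

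With these two observations the identity falls out. Computing the underlying relation of each side, $\dual{\opcat f}$ is obtained by first leaving the relation of $f$ untouched (the op) and then conversing it (the dual), yielding $\check f$; whereas $\opcat{\dual f}$ is obtained by first conversing $f$ (the dual) and then leaving it untouched (the op), yielding the same $\check f$. A quick check of the bookkeeping shows both are morphisms with source $(R,X,U)$ and target $(S,Y,V)$ in $\concat^{\sf op}$. Since a morphism of $\concat$, and hence of $\concat^{\sf op}$, is determined by its underlying relation together with its source and target, the two morphisms coincide.

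I expect the only real obstacle to be notational rather than mathematical: one must fix precise conventions for the two \emph{a priori}\/ distinct operations $\opcat{(-)}$ and $\dual{(-)}$ so that the asserted equation is even well-typed. Once that is done, the content is simply that the duality functor is given by the same ``converse-and-swap'' rule whether read on $\concat$ or on $\concat^{\sf op}$, and this rule is manifestly invariant under the op. I would therefore keep the $*$-autonomous language in the background and argue directly with relations, in keeping with the concrete style of this section.
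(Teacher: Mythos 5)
Your proof is correct, but there is nothing in the paper to compare it against: this corollary belongs to the block of ``helpful results about morphisms'' that the authors explicitly state \emph{without proof}\/ as ``straightforward to establish''. What you have written is the argument the paper implicitly has in mind, and it is complete. In particular, your first step is doing double duty: by exhibiting $\dual{f}$ concretely as the converse relation $\check f$ and checking, via the characterisation $f\in\orth{(U\tensor Y)}$, that $\check f\in\orth{(Y\tensor U)}$ --- which is exactly the membership required of a morphism from $(S,Y,V)$ to $(R,X,U)$ --- you simultaneously prove \cref{dualmorphism}, which the paper also leaves unproven and which only asserts the \emph{existence}\/ of $\dual{f}$ without saying what it is. Pinning down that construction is genuinely necessary here, since otherwise the corollary has no determinate content. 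Once $\dual{(-)}$ is fixed as ``take the converse relation and swap cliques with anticliques'' and $\opcat{(-)}$ as ``keep the relation and swap source with target'', the equality $\dual{\opcat{f}}=\opcat{(\dual{f})}$ reduces, as you say, to the fact that $\opcat{(-)}$ acts as the identity on the underlying relation, so it is immaterial whether it is applied before or after conversion; both sides are the morphism carried by $\check f$ between $(S,Y,V)$ and $(R,X,U)$, read in $\concat$ or equivalently in $\opcat{\concat}$. Your caveat about fixing conventions so that the equation is well-typed is also the right thing to flag: the paper itself only ever uses $\dual{(-)}$ as a functor $\opcat{\concat}\to\concat$ (in the construction of the functors $F_{\dual{\alpha_i}}$ in \cref{deepmodel}), and under that reading your bookkeeping goes through verbatim.
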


\subsection{Deep inference derivations as morphisms}\label{deepmodel}

We explain how to interpret a deep inference derivation as a clique
in a concordance space based on the derived formula.

For these purposes we assume that there is a countable set of atoms,
$\alpha_i$, where $i\in \N$, and that each of these has a dual,
$\dual{\alpha_i}$. These are the primitives of our language. We refer
to the $\alpha_i$ as positive and the $\dual{\alpha_i}$ as negative
as before.

Given a well-formed formula $A$---so negation is applied to atoms
only---we define a concordance space, and we do so \emph{relative}\/
to having picked, for each atom, a concordance space to represent it.
The concordance space for each atom is arbitrary but fixed,
and the properties of our cliques depend on on whether any
concordance space of some atom $\alpha_i$ is equal to or isomorphic to another
instance of the same atom $\alpha_i$ in the same formula or proof.

We may think of such an object as a functor from the terminal
category $\One$ to $\concat$, and we define this recursively.

\begin{itemize}
\item For each $i$ we have a functor $F_{\alpha_i}$ from $\One$ to $\concat$ which
  maps the only object of $\One$ to the concordance space
  representing it.\footnote{This means that our interpretation is
    relative to this valuation.}

\item For each $i$ we also have a functor $F_{\dual{\alpha_i}}$, which is
  constructed as follows:
\[\begin{tikzcd}
\One=\opcat{\One}\arrow{r}{\opcat{F_{\alpha_i}}}&\opcat{\concat}\arrow{r}{\dual{(-)}}
&\concat.
\end{tikzcd}
\]
\item If $F_A$ is the functor corresponding to $A$ and $F_B$ is the
  functor corresponding to $B$ then  the functor
  $F_{A\tensor B}$ corresponding to $A\tensor B$ is the composite
\[
  \begin{tikzcd}
  \One\isom\One\times\One \arrow{r}{F_A\times F_B}
  &\concat\times\concat\arrow{r}{\tensor}&\concat
  \end{tikzcd}
\]
  and the functor $F_{A\llpar B}$ is the composite
\[
  \begin{tikzcd}
  \One\isom\One\times\One \arrow{r}{F_A\times F_B}
  &\concat\times\concat\arrow{r}{\llpar}&\concat.
  \end{tikzcd}
\]
\end{itemize}

For each derivation rule in $\mllm{DI}$ we provide a morphism from
the concordance space interpreting the premise to the concordance space
interpreting the conclusion. The empty formula is interpreted by~$I$.

Hence a derivation is interpreted by a morphism, given by the
composite of morphisms corresponding to the derivation rules used. If
we have a derivation that starts from the empty premise then we may
associate it with a clique, namely the image of the clique $\{*\}$ of
$I$ under that morphism.

We provide a result that allows us to interpret an axiom, by
providing a clique in the space that interprets the $\llpar$ of a
space and its dual.

\begin{lemma}\label{i_clique}
  For a concordance space $A=(R,U,X)$ the following set is a clique in
  $\dual{A}\llpar A$:
\[\Delta_A=\{(r,r)\suchthat r\in R\}.\]
\end{lemma}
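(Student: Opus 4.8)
The plan is to unfold the definitions of the connectives until the statement reduces to the defining orthogonality of a concordance space. First I would compute $\dual{A}\parr A$ explicitly. Since $\dual{A}=(R,X,U)$, and the par of a space with clique/anticlique components $(U',X')$ with one with components $(V,Y)$ has, by the formula given above, clique set $\orth{(X'\tensor Y)}$, applying this with $X'=U$ (the anticlique component of $\dual A$) and $Y=X$ (the anticlique component of $A$) shows that the cliques of $\dual{A}\parr A$ are exactly the elements of $\orth{(U\tensor X)}\subseteq\pow(R\times R)$. So the goal becomes to show $\Delta_A\in\orth{(U\tensor X)}$.

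By the definition of $\orth{(-)}$, this amounts to checking that $\Delta_A\perp w$ for every $w\in U\tensor X$, and every such $w$ has the form $u\times x$ with $u\in U$ and $x\in X$. So I would compute the intersection $(u\times x)\cap\Delta_A$. A pair $(r,r)\in\Delta_A$ lies in $u\times x$ exactly when $r\in u$ and $r\in x$, so
\[(u\times x)\cap\Delta_A=\{(r,r)\suchthat r\in u\cap x\},\]
which is in bijection with $u\cap x$.

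The key step is then to observe that $u$ and $x$ are orthogonal: since $u\in U=\orth{X}$ and $x\in X$, the definition of $\orth{X}$ gives $u\perp x$, that is, $u\cap x$ is at most a singleton. Hence $(u\times x)\cap\Delta_A$ is at most a singleton, so $\Delta_A\perp(u\times x)$. As $u\in U$ and $x\in X$ were arbitrary, $\Delta_A$ is orthogonal to every element of $U\tensor X$, giving $\Delta_A\in\orth{(U\tensor X)}$ as required. I do not expect a genuine obstacle here: the only points requiring care are bookkeeping, namely correctly tracking the swap of $U$ and $X$ under the dual and identifying which component of the par yields the cliques, after which the result is precisely the built-in orthogonality of a clique against an anticlique.
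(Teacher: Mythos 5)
Your proposal is correct and follows essentially the same route as the paper's proof: both reduce the claim to showing $\Delta_A \perp (u\times x)$ for every clique $u\in U$ and anticlique $x\in X$, and both conclude from $(u\times x)\cap\Delta_A$ being in bijection with $u\cap x$, which is at most a singleton by the defining orthogonality $U=\orth{X}$. The only difference is presentational: you unfold the par and dual constructions explicitly to identify the clique set as $\orth{(U\tensor X)}$, which the paper leaves implicit.
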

\begin{proof}
  We need to show that for all anticliques $x$ and cliques $u$ of $A$
  we have that if $(r,r),(r',r')\in \Delta_A\cap (u\times x)$ 
  then $r=r'$. But for such $r$ and $r'$ we have that $r,r'\in u$ as
  well as $r,r'\in x$, and so $r,r'\in u\cap x$, which means $r=r'$
  by the definition of a concordance space.
\end{proof}

For a concordance space $A=(R,U,X)$ let \fDownT{A} be the
morphism from $I$ to $\dual{A}\llpar A$ which relates $*$ to the pair
$(r,r')$ if and only if $r=r'$. To see that this is indeed a morphism
note that it sends the only non-trivial clique $\{*\}$ on $I$ to
$\Delta_A$. If, on the other hand, we have an anticlique $z$ in
$\dual{A}\llpar A$ then it is mapped to the empty anticlique if it
contains none of the elements of $\Delta_A$, and to the anticlique
$\{*\}$ otherwise. These morphisms interpret the rule~\iDownT{},
depending on the formula generated.

Similarly we may define a morphism \fUpT{A} from some concordance
space $A\tensor\dual{A}$ to $I$ which relates some pair $(r,r')$ to
$*$ if and only if $r=r'$ to interpret the rule~\iUpT{}.

We may think of these cliques as encoding which pairs of atoms have
been created together, and effectively giving the same information as
the axiom links of a proof net.

The remaining rules all involve contexts. In a manner similar to
using a formula to define a functor we may define a functor from a
context.

We define a grammar that generates contexts. Let $A$ be an
arbitrary formula.
\[S\coloneqq \{\,\}%
  \prodor S\tensor A\prodor A\tensor
  S\prodor S\llpar A\prodor A\llpar S.\]

We may interpret each context as a functor from $\concat$ to $\concat$:
\begin{itemize}
\item The empty context $\{\,\,\}$ is interpreted by the identity
  functor on $\concat$.
\item Given the functor $F_S$ which interprets $S$, the functor to
  interpret say, $S\tensor A$, is given by
\[
  \begin{tikzcd}
    \concat\arrow{r}{F_S}&\concat\arrow{r}{-\tensor A} &\concat,
  \end{tikzcd}
\]
and functors for the other construction rules are defined correspondingly.
\end{itemize}

When we `fill the hole' in $S$ with some formula $A$ then, relative
to the given valuation, we obtain a concordance space by applying the
functor $F_S$ to the interpretation of $A$ to obtain the coherence
space that interprets the resulting formula.

Since we have constructed functors it is the case that if we have a
morphism $f$ from the interpretation $\intp{ A}$ of the formula $A$
to the interpretation $\intp{B}$ of the formula $B$ we obtain a
morphism from the interpretation $F_S\intp{A}$ of $S\{A\}$ to the
interpretation $F_S\intp{B}$ of $S\{B\}$ by applying the functor
$F_S$ to~$f$. Hence in order to interpret a non-axiomatic
derivation rule all we need to do is to provide a morphism from the
interpretation of the formula that appears in the hole in the
premise to the interpretation of the formula that appears in that
position in the conclusion.

Since $\llpar$ and $\tensor$ provide monoidal structures on $\concat$
we have structural isomorphisms
  for unit, symmetry and
associativity, and we use the latter two to interpret the
corresponding derivation rules, providing us with morphisms
\fSigmaUpT{A,B}, \fSigmaDownT{A,B}, \fAlphaUpT{A,B,C} and
\fAlphaDownT{A,B,C}.
These isomorphisms are given
  in a very obvious way by bijective functions provided by the set
  theoretic properties of the Cartesian product; for example, the
  associativity isomorphism is that which maps some $(a,(b,c))$ to
  $((a,b),c)$, see also~\cref{lemswitch} where this particular
  bijection is also used to interpret switch.
We find it useful to distinguish these isomorphisms by referring to them as \textbf{\permutations{}}.
 For the non-axiomatic rules for \iDownT{} and
\iUpT{}, observe the results below.

We have the unit structural isomorphism \rhoTensorT{} for the
symmetric monoidal structure given by $\tensor$ which allows us to
form
\[
  \begin{tikzcd}
    B\arrow{r}{\rhoTensor{B}} &B\tensor I\arrow{r}{B\tensor
      \fDown{A}} &B\tensor (\dual{A}\llpar A).
  \end{tikzcd}\]
Let \fiDownT{B, A} be this morphism and applying $F_S$ to \fiDownT{}
allows us to interpret the \iDownT{} rule:
\[\fromto{F_S\intp{B}}{F_S\intp{B\tensor(\dual{A}\llpar A)} =
    F_S(\intp{B}\tensor(\dual{\intp{A}}\llpar\intp{A}))}.\]
Similarly, since $I$ is also the unit for $\llpar$ in $\concat$, we may
use the unit structural isomorphism to obtain
\[
  \begin{tikzcd}
    (A\tensor\dual{A})\llpar B\arrow{r}{\fUp{A}\llpar B}
    & I\llpar B\arrow{r}{\lambdaPar{B}} &B,
  \end{tikzcd}
\]
which gives the desired composite \fiUpT{A, B} that interprets the rule~\iUpT{}.

As we will show, all cliques of the same formula will be equal by these definitions
regardless of whether or not each atom pair come from the same introduction
rules. For example, the clique for the formula
\[ ((\textbf{a} \parr \dual{\textbf{a}}) \otimes (a \parr \dual{a})) \]
will be equal to the clique for
\[ ((\textbf{a} \parr \dual{a}) \otimes (a \parr \dual{\textbf{a}})) \]
where we have highlighted typographically which two atoms came from the same
axiom rule.
If we desire to keep track of where instances of atoms originated together
we can assign different instances of the same axiomatic rules
of the same atoms to cliques that are isomorphic rather than equal.
So for example if there are two \aiDownT{} rules for the same atom $\alpha_i$
then we would assign two isomorphic coherence spaces \({A_i \isom A_i'}\)
that differ only in the labelling of the elements of the sets \({R \in (R, U, X) = A_i, R' \in
(R', U', X') = A_i'.}\)
Then we would need to fix a bijection for all elements in the underlying sets.
Then for \fUpT{A} we would relate a pair \((r, r')\) to \(*\) if and only if
there is a fixed bijection mapping \(r \mapsto r'\) rather than an equality.

\begin{remark}\label{tensor_of_cliques} We make explicit how we get
  from a pair of cliques to a clique in the tensor product of the
  underlying concordance spaces. In general we have for morphisms $f$
  and $g$ that their tensor product has the following effect:
  \[[u\times v](f\tensor g) =[u]f\tensor [v]g\]
  
  Now let $f : I \to X$ be a morphism which maps the clique $\{*\}$
  to $x$ and let $g : I \to Y$ be a morphism mapping $\{*\}$ to $y$.
  Then $f \otimes g : I \otimes I \to X \otimes Y$ maps the clique
  $\{(*, *)\}$ to $x \times y$.
\end{remark}

\begin{lemma}\label{tensor_inside_outside_isomorphism}
  Let $f_B: I \to B$ be some morphism. Then
  the clique resulting from
  $(B \otimes \fDown{A}) \after \rhoTensor{B} \after f_B$ is equal to
  the clique resulting from $f_B \otimes \fDown{A}$.
\end{lemma}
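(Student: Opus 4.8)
The plan is to reduce the claimed equality of cliques to a direct set-theoretic computation. Both morphisms in the statement have codomain $B \tensor (\dual{A} \llpar A)$, and a clique is by definition just a subset of the underlying set, so "equal clique" means equal as subsets. Since a morphism out of a (tensor power of the) unit is, for our purposes, determined by the image of the generating clique, it suffices to compute the image of $\{*\}$ on the left and of $\{(*,*)\}$ on the right and to check that the two resulting subsets coincide. Throughout I would write $A = (R,U,X)$, so that $\dual{A} \llpar A$ has underlying set $R \times R$ and $\Delta_A = \{(r,r) \suchthat r \in R\}$ is the clique of \cref{i_clique} that $\fDown{A}$ assigns to $\{*\}$; and I would abbreviate $x = [\{*\}]f_B$ for the clique that $f_B$ produces in $B$.

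The right-hand side is immediate from \cref{tensor_of_cliques}: since $f_B$ sends $\{*\}$ to $x$ and $\fDown{A}$ sends $\{*\}$ to $\Delta_A$, the tensor $f_B \tensor \fDown{A}$ sends the generating clique $\{(*,*)\}$ of $I \tensor I$ to $x \times \Delta_A$.

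For the left-hand side I would trace the clique $\{*\}$ through the three factors in turn. First $f_B$ produces $x$. Next $\rhoTensor{B}\colon B \to B \tensor I$ is the unit isomorphism, whose underlying relation is the bijection $b \mapsto (b,*)$, so its direct image carries $x$ to $x \times \{*\}$. Finally $B \tensor \fDown{A}$ is $\id{B} \tensor \fDown{A}$, and applying the image-of-a-tensor formula from \cref{tensor_of_cliques} gives $[x \times \{*\}](\id{B} \tensor \fDown{A}) = [x]\id{B} \times [\{*\}]\fDown{A} = x \times \Delta_A$. Hence both composites yield the same subset $x \times \Delta_A$, which establishes the equality.

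The only point needing care---and the step I would treat as the main (if mild) obstacle---is the middle one: pinning down the concrete action of the structural isomorphism $\rhoTensor{B}$ on cliques and confirming that it merely appends the coordinate $*$, so that feeding its output into $\id{B} \tensor \fDown{A}$ reproduces exactly the Cartesian product computed directly on the right-hand side. Once $\rhoTensor{B}$ is identified with the set-level bijection $b \mapsto (b,*)$, the remainder is routine bookkeeping with direct images together with the remark on tensors of unit-based morphisms.
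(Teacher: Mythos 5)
Your proposal is correct and follows essentially the same route as the paper's proof: compute both composites on the generating clique, using the concrete action of $\rhoTensor{B}$ to get $x \times \{*\}$ and the formula of \cref{tensor_of_cliques} to conclude that both sides yield $x \times \Delta_A$. The only difference is cosmetic---you invoke \cref{tensor_of_cliques} explicitly for the left-hand composite as well, where the paper states that step directly.
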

\begin{proof}
  Let $b$ be the clique resulting from~$f_B$.
  For the first composition of morphisms,
    $\rhoTensor{B}$ gives us the clique $b \times \{*\}$.
  Then, $B \otimes \fDown{A}$ results in $b \times \Delta_A$.

  For the second, $\fDown{A}$ still yields us $\Delta_A$. The
  bifunctor $\otimes: \concat\times\concat \to \concat$ maps the
  product of morphisms componentwise. Therefore, by
  \cref{tensor_of_cliques}, given two cliques $b$ and $\Delta_A$,
  their product is also $b \times \Delta_A$.
\end{proof}

It remains to interpret the switch rule, that is, for concordance
spaces $A$, $B$ and $C$, to give a morphism
\[\fctl{\fSwitch{A,B,C}}{A\tensor(B\llpar C)}{(A\tensor B)\llpar C.}\]

\begin{lemma}\label{lemswitch}
  Let $A$, $B$ and $C$ be concordance spaces with underlying sets
  $R$, $S$ and $T$ respectively.
  The canonical isomorphism \[\fromto{R\times (S\times T)}{(R\times
    S)\times T,}\] is a morphism in $\concat$ from
   $A\tensor(B\llpar C)$ to $(A\tensor B)\llpar C$.
\end{lemma}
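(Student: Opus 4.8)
The plan is to exploit that the canonical isomorphism here is a genuine bijective function $\phi\colon R\times(S\times T)\to(R\times S)\times T$, $\phi(r,(s,t)) = ((r,s),t)$, rather than an arbitrary relation. For such a function the two clauses in the definition of a morphism of concordance spaces collapse into a single pointwise condition, and I would make this reduction first. By \cref{cohprop}(v) a set is a clique exactly when all of its two-element subsets are, so to see that $\phi$ carries a clique $u$ of $A\tensor(B\llpar C)$ to a clique of $(A\tensor B)\llpar C$ it suffices to treat two-element subsets, i.e.\ to show that $\phi$ sends two-element cliques of the source to cliques of the target. The anticlique clause reduces the same way: using \cref{cohprop}(v) applied to the anticliques together with the downward closure \cref{cohprop}(ii), it is enough to send two-element anticliques of the target back to anticliques of the source, and by \cref{cohprop}(iv) a two-element set of distinct points is an anticlique precisely when it fails to be a clique. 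Hence the anticlique clause is just the contrapositive of the clique clause, and both follow once $\phi$ is shown to preserve the coherence relation on two-element sets.

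So the one thing left to verify is that if $(r,(s,t))$ and $(r',(s',t'))$ are coherent in $A\tensor(B\llpar C)$, then $((r,s),t)$ and $((r',s'),t')$ are coherent in $(A\tensor B)\llpar C$. Working through the isomorphism $\concat\isom\cohcat$, coherence in a tensor is componentwise, while coherence of two distinct points in a par $D\llpar E$ holds exactly when they are coherent and distinct in at least one component. Thus the hypothesis reads $r\coheq{A}r'$ together with: $(s,t)=(s',t')$, or $s\coheq{B}s'$ with $s\neq s'$, or $t\coheq{C}t'$ with $t\neq t'$; and the goal reads: the two target points are equal, or $(r,s)\coheq{A\tensor B}(r',s')$ with $(r,s)\neq(r',s')$, or $t\coheq{C}t'$ with $t\neq t'$.

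I would then split on which disjunct of the hypothesis holds. If $(s,t)=(s',t')$ and $r=r'$ the points are equal; if $(s,t)=(s',t')$ but $r\neq r'$, then $r\coheq{A}r'$ and $s\coheq{B}s'$ hold (the latter reflexively, since $s=s'$) with $(r,s)\neq(r',s')$, giving the tensor disjunct. If instead $s\coheq{B}s'$ with $s\neq s'$, then again $r\coheq{A}r'$ and $s\coheq{B}s'$ with $(r,s)\neq(r',s')$, so the tensor disjunct holds. Finally, if $t\coheq{C}t'$ with $t\neq t'$ the last disjunct of the goal holds outright. In every case the target pair is coherent, as required.

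The main thing to get right is the par bookkeeping—keeping careful track of the ``equal, or coherent-and-distinct in some component'' description and of where the reflexive diagonal gets absorbed—together with the discipline of proving only the forward implication. The reverse implication is false: taking $r\neq r'$ coherent in $A$, $s=s'$, and $t\neq t'$ incoherent in $C$ yields a coherent target pair whose $\phi$-preimage is incoherent. This is exactly why switch is a non-invertible morphism, so the argument must not drift into proving an isomorphism. It also confirms, as anticipated, that the very bijection already used for associativity is what realises switch here.
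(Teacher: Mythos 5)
Your proof is correct, but it follows a genuinely different route from the paper's. The paper never leaves the orthogonality axiomatization: writing $A=(R,U,X)$, $B=(S,V,Y)$, $C=(T,W,Z)$, it reduces the clique clause to the containment $\orth{(U\tensor V)}\tensor Z\subseteq\orth{(U\tensor \orth{(Y\tensor Z)})}$ using that $\orth{(-)}$ is order-reversing, verifies the resulting orthogonality statement by hand with a case split on whether $\{s,s'\}\in V$ or $\{s,s'\}\in Y$ (the concordance analogue of your disjunction on the par), and disposes of the anticlique clause by remarking that the computation is ``much the same.'' You instead exploit bijectivity to collapse the two clauses of the morphism definition into one pointwise claim---via \cref{cohprop} parts (ii), (iv), (v), forward preservation of coherence on two-element sets yields both clique preservation and anticlique pullback---and then check that claim in the coherence-relation presentation. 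Your route buys half the work, a mechanical case analysis, and an explicit counterexample showing the inverse is not a morphism (matching the paper's remark after the lemma). What the paper's route buys is self-containedness: its argument lives in the language in which $\tensor$ and $\llpar$ on $\concat$ are actually \emph{defined}. That is the one caveat for your version: the translation step silently assumes that the orthogonally-defined connectives on $\concat$ have the expected relational descriptions (tensor coherence is componentwise; par coherence of distinct points means coherent-and-distinct in some component). The paper asserts this compatibility when claiming $*$-autonomy of $\concat$ but never proves it, and its own proof avoids needing it. The missing verification is short---for instance, $\{(r,s),(r',s')\}\in\dorth{(U\tensor V)}$ iff $\{r,r'\}\in U$ and $\{s,s'\}\in V$: one direction by downward closure, since $\{r,r'\}\times\{s,s'\}\in U\tensor V$; conversely, if say $\{r,r'\}\in X$ with $r\neq r'$, then $\{(r,s),(r',s')\}$ is itself an element of $\orth{(U\tensor V)}$, hence not a clique---but you should include it if the proof is to rest on the paper's formal definitions alone.
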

\begin{proof}
  We have to show that this relation maps cliques in
  $A\tensor(B\llpar C)$ to cliques in $(A\tensor B)\llpar C$. This
  means we have to show that cliques of the source and anticliques of
  the target space are treated appropriately. To reduce the number of
  brackets we suppress the bijection and treat elements of both
  concordance spaces as triples in $A\times B\times C$, and think of
  the morphism in question as the identity, which requires being
  somewhat liberal with bracketing along the way.

Assume that $A=(R,U,X)$, $B=(S,V,Y)$ and $C=(T,W,Z)$.

We have to show that 
\[\dorth{(U\tensor \orth{(Y\tensor Z)})}\subseteq
  \orth{(\orth{(U\tensor V)}\tensor Z)}.\] 
Since the operation $\orth{( \,)}$ is order-reversing it is sufficient
to show
\[\orth{(U\tensor \orth{(Y\tensor Z)})}\supseteq
  \orth{(U\tensor V)}\tensor Z.\] 
For this purpose assume that we have $p\in\orth{(U\tensor V)}$ and
$z\in Z$. We have to show that given $u\in U$ and $n\in
\orth{(Y\tensor Z)}$
\[{u\times n}\perp p\times z,\]
that is that
\[(r,s,t),(r',s',t')\in (p\times z)\cap (u\times n)\]
implies
\[r=r',\qquad s=s',\qquad t=t'.\]
There are two cases.
\begin{itemize}
\item $\{s,s'\}\in V$ and so $u\times \{s,s'\}\in U\tensor V$. In
  this case we have that
\[(r,s), (r',s')\in (u\times\{s,s'\})\cap p,\]
but $p\in\orth{(U\tensor V)}$ and so $r=r'$ and $s=s'$. Further we have
that
\[(s,t),(s,t')=(s',t')\in (\{s\}\times z)\cap n\]
and since $\{s\}\times z\in Y\times Z$ and $n\in\orth{(Y\times Z)}$
we get that $t=t'$.
\item $\{s,s'\}\in Y$, and so $\{s,s'\}\times z\in Y\tensor Z$. In
  this case we have
\[(s,t),(s',t')\in (\{s,s'\}\times z)\cap n\]
and since $n\in \orth{(Y\tensor Z)}$ this implies $s=s'$ and $t=t'$.
Accordingly we now obtain
\[(r,s),(r',s')=(r',s)\in (u\times \{s\})\cap p,\]
and since $p\in \orth{(U\tensor V)}$ and also $u\times \{s\}\in
U\tensor V$ we may conclude that $r=r'$ as required.
\end{itemize}
The proof that the given relation preserves anticliques in the
opposite direction is much the same.
\end{proof}

It is convenient to have a name for the analogous morphism from
\[(A\llpar B)\tensor C\qquad\text{to}\qquad A\llpar (B\tensor C),\]
and we use \fSigmaSwitchT{A,B,C} for this purpose.

Note that while the underlying relations for \fSwitchT{A,B,C} and
\fSigmaSwitchT{A,B,C} are isomorphisms, their inverses do
\emph{not}\/ give morphisms in the opposite direction in $\concat$.
This is desired as there is no inverse to the switch rule in deep inference.

For every $\mllm{DI}$ derivation and suitable valuation we obtain a
morphism in $\concat$, and if the derivation starts from no
assumption then this is a morphism from $I$ to the interpretation
$\intp{A}$ of the concluding formula~$A$. This morphism is uniquely
determined by the image of the clique $\{*\}$ in $I$, which is itself
a clique of $\intp{A}$, so we could alternatively consider the
interpretation of a derivation to be a clique in the concluding
formula.

\begin{proposition}\label{same_cliques_with_contexts}
  Suppose we have cliques $b, c$ for respective formulae $B, C$ and two
  morphisms $f_{DI}$ and $f_{SC}$ that both map $b$ to $c$.

  Then the morphisms $F_Sf_{DI}$ and $F_Sf_{SC}$ map $b'$ in $F_S\intp{B}$ to
  $c'$ in $F_S\intp{C}$ for any context $S$.
\end{proposition}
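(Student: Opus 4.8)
The plan is to argue by structural induction on the context $S$, exploiting that the context functor $F_S$ is assembled by repeatedly post-composing with a bifunctor ($\tensor$ or $\llpar$) one of whose arguments is frozen at a context formula, and that on morphisms both bifunctors act as the Cartesian product of the underlying relations. Throughout, $b'$ and $c'$ denote the \emph{contextualisations} of $b$ and $c$: the cliques obtained by leaving $b$ (respectively $c$) in the hole and inserting some fixed clique for each formula adjoined by $S$, so that at every layer $b'$ has the product shape $b''\times a$ (or $a\times b''$), where $b''$ is the contextualisation of $b$ through the inner context and $a$ is the clique chosen for the adjoined space $\intp A$.

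Before the induction I would dispatch the bookkeeping fact that such products are again cliques. For $\tensor$ this is immediate, since $u\times v\in U\tensor V\subseteq\dorth{(U\tensor V)}$; for $\llpar$ it is the orthogonality computation $(u\times v)\cap(x\times y)=(u\cap x)\times(v\cap y)$, which is at most a singleton because each factor is, so $u\times v\in\orth{(X\tensor Y)}$. Hence both $b'$ and $c'$ are genuine cliques of $F_S\intp B$ and $F_S\intp C$.

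For the base case $S=\{\,\}$ the functor $F_S$ is the identity, so $b'=b$, $c'=c$, and the claim is precisely the hypothesis that $f_{DI}$ and $f_{SC}$ both send $b$ to $c$. For the inductive step take $S=S'\tensor A$; the cases $A\tensor S'$, $S'\llpar A$ and $A\llpar S'$ are handled identically, since $\tensor$ and $\llpar$ induce the same Cartesian product on relations. By the definition of $F_S$ as $(-\tensor A)\after F_{S'}$ we have $F_S f=(F_{S'}f)\tensor\id{\intp A}$, and applying this to $b'=b''\times a$ gives, by the componentwise action recorded in \cref{tensor_of_cliques}, the clique $[b''](F_{S'}f)\times a$. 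The inductive hypothesis for $S'$ yields $[b''](F_{S'}f_{DI})=[b''](F_{S'}f_{SC})=:c''$, because $f_{DI}$ and $f_{SC}$ still agree on $b$; therefore both $F_S f_{DI}$ and $F_S f_{SC}$ send $b'$ to the common clique $c'=c''\times a$, closing the induction.

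The subtle point to emphasise — and the reason the argument cannot be run for an arbitrary clique of $F_S\intp B$ — is that $f_{DI}$ and $f_{SC}$ are only assumed to agree on the single clique $b$, and two such relations may genuinely disagree on other cliques; it is essential that $b'$ has hole-component exactly $b$, so that the inductive hypothesis is applied to the threaded-down copy of $b$ and never to a clique off which the two morphisms diverge. The only other thing to make explicit is that, although $\llpar$ is defined via the De Morgan dual, on morphisms it is still just the Cartesian product of relations, so that the context component $a$ is carried through untouched by $\id{\intp A}$ in every case; this is what forces the two images to coincide regardless of how $f_{DI}$ and $f_{SC}$ behave elsewhere.
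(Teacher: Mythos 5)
Your proof is correct and rests on the same idea as the paper's own argument: the functor $F_S$ acts on morphisms by taking the Cartesian product with identity relations on the adjoined context formulae, so the context component of the contextualised clique is carried through unchanged and the image is determined entirely by what the morphism does on $b$, where $f_{DI}$ and $f_{SC}$ agree by hypothesis. The paper's proof is only an informal two-sentence remark with a single example, whereas your structural induction over the context grammar --- together with the preliminary check that products of cliques are cliques for both $\tensor$ and $\llpar$, and the observation that the claim cannot hold for arbitrary cliques of $F_S\intp{B}$ --- is a rigorous elaboration of that same argument rather than a different route.
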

\begin{proof}
The construction of the functor $F_S$ means that all elements of the surrounding
context get mapped as identity morphisms.
For example, in the context
  \[S\{\,\} = \{\,\} \tensor A, \]
$F_S$ is the functor mapping $B$ to $B \otimes A$ and so
morphisms
  \[ f: \intp{B} \to \intp{C} \]
get mapped to
  \[ (f, \intp{A}): \intp{B}\tensor\intp{A} \to \intp{C}\tensor\intp{A}, \]
  where the morphism $\intp{A}$ is the identity morphism for the concordance space~$\intp{A}$.
\end{proof}

\begin{corollary}\label{idown_tensor_morphism}
  Let $c$ be the clique for a derivation
  such that $c$ is, for some $n$, a \permutation{} of some
  cross product of $n$ cliques of the form $\Delta_{A_i}$:
  \[ \Delta_{A_1} \times \dots \times \Delta_{A_n}. \]
  Then the clique after \fiDownT{} is also a \permutation{} of some cross
  product of $n+1$
  cliques of the form $\Delta_{A_i}$.
\end{corollary}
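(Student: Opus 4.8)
The plan is to unwind the interpretation of the \iDownT{} rule and show that its only effect on the interpreting clique is to adjoin a fresh diagonal factor $\Delta_A$, after which the claim reduces to the closure of \permutations{} under composition. First I would recall that \iDownT{} is interpreted by applying the context functor $F_S$ to the morphism $\fiDown{B,A}$, which was defined as the composite $(B\tensor\fDown{A})\after\rhoTensor{B}$ from $B$ to $B\tensor(\dual{A}\llpar A)$. By \cref{tensor_inside_outside_isomorphism}, applying this composite to the clique $b$ of $B$ yields exactly the clique $b\times\Delta_A$ that is produced by $f_B\tensor\fDown{A}$, and by \cref{tensor_of_cliques} its second factor is the diagonal $\Delta_A$ of \cref{i_clique}. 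Setting $A_{n+1}\coloneqq A$, this $\Delta_A$ is precisely a clique of the form $\Delta_{A_{n+1}}$, so the local effect of \iDownT{} is to append one further diagonal factor.

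Next I would invoke \cref{same_cliques_with_contexts} to deal with the surrounding context $S$. Since $F_S$ acts as the identity on every part of the formula lying outside the hole, the clique obtained by applying $F_S\fiDown{B,A}$ to $c$ is obtained from $c$ by replacing the sub-clique $b$ sitting in the hole with $b\times\Delta_A$ and leaving the rest untouched. Hence the new clique differs from $c$ only by the insertion of the factor $\Delta_{A_{n+1}}$ at a fixed position, together with whatever rebracketing the functor $F_S$ performs, and all of these rearrangements are \permutations{}.

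Finally I would assemble the witnessing \permutation{}. By hypothesis $c$ is the image of $\Delta_{A_1}\times\dots\times\Delta_{A_n}$ under some \permutation{} $\pi$. Composing $\pi$ with the insertion of $\Delta_{A_{n+1}}$ and with the structural isomorphism coming from $F_S$ yields a single \permutation{} carrying $\Delta_{A_1}\times\dots\times\Delta_{A_n}\times\Delta_{A_{n+1}}$ to the new clique, because each of these maps is built from the unit, symmetry and associativity isomorphisms, which are closed under composition. This exhibits the new clique as a \permutation{} of a cross product of $n+1$ cliques of the form $\Delta_{A_i}$, as required.

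The step I expect to be the main obstacle is the bookkeeping in the last paragraph: pinning down exactly where $\Delta_{A_{n+1}}$ is inserted relative to the existing factors and confirming that the rebracketing induced by $F_S$ is genuinely expressible as one of our \permutations{} rather than some more general isomorphism. This is purely a matter of tracking associativity and symmetry and involves no real difficulty, but it is where the notation must be handled with care.
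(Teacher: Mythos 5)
Your proposal is correct and takes essentially the same route as the paper: the paper's entire proof is a one-line appeal to \cref{tensor_inside_outside_isomorphism}, which is exactly the lemma you place at the centre of your argument (the clique $b$ in the hole becomes $b\times\Delta_A$, adjoining one diagonal factor). Your additional bookkeeping via \cref{same_cliques_with_contexts} and the closure of \permutations{} under composition is left implicit in the paper but is a faithful elaboration of the same idea.
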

\begin{proof}
  This is a consequence of \cref{tensor_inside_outside_isomorphism}.
\end{proof}

\begin{lemma}\label{diatomsonly_cliques}
  Let \(A\) be some formula. Then the clique \(\Delta_A\) given in
  \cref{i_clique} is the clique for the formula \({\dual{A} \parr A}\) for both the
  \iDownT{} rule and the resulting clique after using the translation in
  \cref{diatomsonly} on \iDownT{}.
\end{lemma}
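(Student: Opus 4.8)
The plan is to argue by induction on the structure of the formula \(A\), treating the two assertions together. The first is essentially immediate: the clique assigned to \(\dual{A}\parr A\) by the \iDownT{} rule is by definition the image of the clique \(\{*\}\) of \(I\) under the morphism \fDownT{A}, and the construction of \fDownT{A} recorded just after \cref{i_clique} shows this image to be exactly \(\Delta_A\). The substance of the lemma is therefore the claim that the atomic derivation produced by the translation of \cref{diatomsonly} computes the same clique, and this is what the induction establishes.

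For the base case, when \(A\) is an atom \(\alpha\) the translation is a single \aiDownT{} rule, interpreted by \fDownT{\alpha}, so it again yields \(\Delta_\alpha=\Delta_A\). When \(A=\dual{\alpha}\) is a negated atom, the induction hypothesis gives the clique \(\Delta_\alpha\) for \(\dual{\alpha}\parr\alpha\), and the translation applies \sigmaUpT{}; since this rule is interpreted by the symmetry \permutation{} that transposes the two coordinates, it carries the diagonal to itself, and the resulting set is \(\Delta_{\dual{\alpha}}=\Delta_A\) when read inside the underlying set of \(\alpha\parr\dual{\alpha}\).

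The heart of the argument is the tensor case \(A=B\tensor C\), the case \(A=B\parr C\) being entirely symmetric. Here I would invoke the induction hypothesis to identify the cliques of the subderivations \(\deriv{B}\) and \(\deriv{C}\) with \(\Delta_B\) and \(\Delta_C\), use \cref{same_cliques_with_contexts} to strip away the surrounding context \(\dual{B}\parr(B\tensor\{\,\})\), and appeal to \cref{idown_tensor_morphism} (whose computation is \cref{tensor_inside_outside_isomorphism}) to conclude that once the inner \(\dual{C}\parr C\) has been built the clique at \(\dual{B}\parr(B\tensor(\dual{C}\parr C))\) is a \permutation{} of the cross product \(\Delta_B\times\Delta_C\). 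Writing \(B=(S,\dots)\) and \(C=(T,\dots)\), this cross product consists of the tuples \((s,s,t,t)\), whereas the target clique \(\Delta_{B\tensor C}\), sitting inside the underlying set \((S\times T)\times(S\times T)\) of \(\dual{(B\tensor C)}\parr(B\tensor C)\), consists of the tuples \((s,t,s,t)\). As each remaining rule \sigmaUpT{}, \switchT{}, \sigmaUpT{}, \alphaUpT{} of the translation is a \permutation{} (the switch being the canonical coordinate isomorphism of \cref{lemswitch}), I would track the composite on a representative tuple and verify that \((s,s,t,t)\) is carried to \((s,t,s,t)\), thereby identifying the final clique as \(\Delta_{B\tensor C}\).

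I expect the main obstacle to be exactly this last piece of bookkeeping in the tensor and par cases, namely keeping the reassociations and transpositions straight across the several \permutation{} steps and confirming that their composite is the single interleaving that sends the coordinates of \(\Delta_B\times\Delta_C\) to those of \(\Delta_{B\tensor C}\). The computation is made manageable by the observation that on a diagonal cross product the two copies of each coordinate coincide, so it suffices that the composite \permutation{} route the correct block to each target slot rather than realise one specific permutation; even so, this is where care is needed to avoid a slip in the bracketing.
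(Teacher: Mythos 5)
Your proposal is correct and follows essentially the same route as the paper: the paper's proof is a one-line appeal to \cref{idown_tensor_morphism} together with the observation that every other rule used in the translation of \cref{diatomsonly} is interpreted by a \permutation{}, which is exactly the skeleton of your induction. The only difference is that you explicitly carry out the coordinate bookkeeping (tracking \((s,s,t,t)\mapsto(s,t,s,t)\) through the \sigmaUpT{}, \switchT{}, \alphaUpT{} steps) that the paper leaves implicit, and that tracking is indeed correct.
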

\begin{proof}
  It should be easy to see by \cref{idown_tensor_morphism} that all of the cases in \cref{diatomsonly}
  result in a clique of the form in \cref{i_clique}
  because the only rules used are \fiDownT{} or rules that have morphisms that are \permutations{}.
\end{proof}

\begin{proposition}\label{iup_same_cliques}
  Let $c$ be the clique for a derivation of ${\intp{(A \tensor \dual{A}) \parr B}}$
  such that $c$ is, for some $n$, a \permutation{} of some
  cross product of $n$ cliques of the form $\Delta_{A_i}$:
  \[ \Delta_{A_1} \times \dots \times \Delta_{A_n}. \]
  Then the clique after \fiUpT{} is also a \permutation{} of some cross product
  of $m < n$ cliques of the form $\Delta_{A_i}$.
\end{proposition}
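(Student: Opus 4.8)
The plan is to unfold \fiUpT{A,B} and compute the image of the clique \(c\) directly, then read off its shape from the product‑of‑diagonals form of \(c\). Since \(B\) already carries the whole remaining formula and, by \cref{same_cliques_with_contexts}, any ambient context is transported as an identity, it suffices to analyse \(c\) on the formula \((A\tensor\dual A)\parr B\) itself. Recall that \(\fiUp{A,B}=\lambdaPar{B}\after(\fUp{A}\parr B)\), that \fUpT{A} relates a pair \((r,r')\) in \(A\tensor\dual A\) to \(*\) exactly when \(r=r'\), and that \(\parr\) of relations acts as their Cartesian product (\cref{tensor_of_cliques}). Writing a generic element of the underlying set as \(((r,r'),b)\), the composite sends \(((r,r'),b)\) to \(b\) when \(r=r'\) and to nothing otherwise, so the resulting clique is
\[ [c]\,\fiUp{A,B}=\{\,b\mid\exists r.\ ((r,r),b)\in c\,\}. \]
This is automatically a clique, because \fiUpT{A,B} is a morphism; the content of the proposition is to recognise it as a product of diagonals with strictly fewer factors.

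Next I would recast the hypothesis combinatorially. Refining each composite \(\Delta_{A_i}\) into a \permutation{} of the diagonals of its atoms, so that without loss of generality every \(A_i\) is an atom, the assertion that \(c\) is a \permutation{} of \(\Delta_{A_1}\times\dots\times\Delta_{A_n}\) says precisely that \(c\) consists of the assignments to the atom‑occurrences of \((A\tensor\dual A)\parr B\) that are constant along a fixed perfect matching \(M\), the matching pairing each occurrence with the dual occurrence it was created with. The extra condition \(r=r'\) coming from \fUpT{A} is, atom by atom, a second perfect matching \(K\) pairing every occurrence in \(A\) with its dual in \(\dual A\). I would then form the graph on all atom‑occurrences with edge set \(M\cup K\): each occurrence inside \(A\) or \(\dual A\) has degree two while each occurrence of \(B\) has degree one, so the graph decomposes into cycles lying wholly inside \(A\tensor\dual A\) and paths whose two endpoints are occurrences of \(B\). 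Such a path alternates \(M\)- and \(K\)-edges, each of which links dual occurrences, so its endpoints carry the same atom name with opposite polarity and all values along it are forced equal; after projecting onto \(B\) each path therefore contributes exactly one diagonal, and \([c]\,\fiUp{A,B}\) is a \permutation{} of some \(\Delta_{C_1}\times\dots\times\Delta_{C_m}\).

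Finally I would count. Each path has exactly two endpoints among the occurrences of \(B\), so \(m\) is half the number of atom‑occurrences of \(B\), whereas \(n\) is half the number of atom‑occurrences of all of \((A\tensor\dual A)\parr B\); since \(A\) contains at least one atom, the occurrences of \(A\) together with their duals in \(\dual A\) account for the difference, giving \(m<n\). I expect the crux to be the combinatorial claim of the middle paragraph, namely that superimposing the two perfect matchings \(M\) and \(K\) yields only paths ending in \(B\) and cycles disjoint from \(B\); this is what guarantees that the image is a genuine product of diagonals rather than a more tangled clique, and it is exactly where the interpretation of the cut as a composition of axiom links is doing the real work, with the cycles — corresponding to links of \(A\) that close up through the cut — harmlessly projected away. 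Some care is also needed over the granularity of the decomposition so that the inequality \(m<n\) is strict, which is precisely the point at which the non‑emptiness of \(A\) enters.
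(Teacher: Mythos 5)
Your proof is correct, and it computes the same set as the paper's own proof --- the image clique $\{\, b \suchthat \exists a .\, ((a,a),b)\in c \,\}$ --- but it establishes its shape by a genuinely different and more general argument. The paper reasons directly on the given factorisation of $c$: for an element $((a,a'),b)\in c$ it distinguishes exactly two cases, namely that $a$ and $a'$ are the two halves of one and the same factor $\Delta_{A_i}$ (so $a=a'$ is automatic and that factor is simply deleted), or that they are halves of two different factors whose other halves lie inside $b$, in which case the constraint $a=a'$ chains those two factors into a single diagonal inside $B$ (what the paper calls a ``Cartesian division''); in either case $m=n-1$. Your matching-superposition argument recovers these two cases as, respectively, a $2$-cycle and a three-edge path, but it also covers configurations that the paper's dichotomy does not literally address: when $A$ is composite its atoms may be distributed over three or more factors --- some matched inside $A\tensor\dual{A}$, some chained through $B$ across several links --- and then the image arises by collapsing entire paths, not by a single deletion or a single merge. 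This is the standard composition-of-axiom-links-through-a-cut argument from proof-net theory; what it buys is uniformity and a transparent count (the deficit in the number of factors is exactly the number of atoms of $A$, whence strictness). What the paper's shorter argument buys is that it stays at the granularity of the factorisation produced by the induction behind \cref{all_di_cliques_permutations}, giving $m=n-1$ on the nose.

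Two points deserve care. First, your ``without loss of generality'' refinement into atomic diagonals replaces $n$ by a possibly larger $n'$, so what you literally prove is $m<n'$; to read the conclusion against the originally given $n$ you should add that atomic diagonals can be regrouped, up to \permutation{}, into composite ones, or simply state the proposition at atomic granularity, which is the form in which \cref{all_di_cliques_permutations} actually uses it. Second, discarding the cycles tacitly assumes the carrier sets along a cycle are non-empty (otherwise $c$, and hence the image, is empty while the product of path diagonals need not be); this degenerate case is equally ignored by the paper's proof, and cycles correspond to linkings that never come from correct derivations, so nothing substantive is lost.
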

\begin{proof}
  Clearly $c$ has elements of the form ${((a, a'), b)}$.
  If $a$ and $a'$ come from the same $\Delta_{A_i}$
  then $a = a'$ and the new clique is \(b\) and is a \permutation{} of
  \[ \Delta_{A_1} \times \dots \times \Delta_{A_{i-1}} \times \Delta_{A_{i+1}} \times \dots \times \Delta_{A_n}. \]
  Otherwise, they come from two different instances $\Delta_{A_i}$ and $\Delta_{A_j}$
  of concordance spaces of the same formula.
  We know that $b$ must contain the other (negative or positive) halves of $a$ and $a'$ inside of it.
  Additionally, we know from \cref{diatomsonly_cliques} that even if one of $\Delta_{A_i}$ and $\Delta_{A_j}$
  comes from \axT{} and one from \idT{} or otherwise by hypothesis a combination of non-\cutT{} rules, that
  they will have the same form.
  Therefore, the new clique after \fiUpT{} will be
  \[ \{ b \suchthat \exists a :\, ((a, a), b) \in c \text{ and } (a, a) \in \Delta_{A_i} \text{ and } (a, a) \in \Delta_{A_j} \}. \]
  or in other words
  \[ \{ b \suchthat \exists a:\, ((a, a), b) \in c \} \]
  because
  \[ \{ (a, a) \suchthat (a, a) \in \Delta_{A_i}, (a, a) \in \Delta_{A_j} \} = \{ (a, a) \suchthat a \in \Delta_{A_i} \}. \]
  This operation relating \(\Delta_{A_i} \times \Delta_{A_j} \times B\) to
  \(B\) could be described as a Cartesian division.
\end{proof}

\begin{corollary}\label{all_di_cliques_permutations}
  All $\mllm{DI}$ cliques are a \permutation{} of
  \[ \Delta_{A_1} \times \dots \times \Delta_{A_n} \]
  where $n$ is the number of atom pairs in the formula.
\end{corollary}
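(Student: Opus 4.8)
The plan is to prove the statement by induction on the number of inference rules in the given $\mllm{DI}$ derivation. Since the interpretation of an assumption-free derivation is the image of the clique $\{*\}$ of $I$ under the composite of the morphisms interpreting its rules, the clique can be computed incrementally, one rule at a time; so I may assume that the clique $c$ obtained from the derivation up to some line is already a \permutation{} of a cross product $\Delta_{A_1}\times\dots\times\Delta_{A_n}$ of atomic diagonals, and check that the next rule preserves this shape. It is worth keeping in mind the concrete reading of the invariant: such a clique records a perfect matching between the positive and negative occurrences of the atoms, forcing equality on matched components, which is precisely the axiom-link data the paper alluded to earlier.

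The base case is a single axiom. For the atomic axiom \aiDownT{} the clique is the single diagonal $\Delta_{\alpha}$ of \cref{i_clique}, so $n=1$. For the general \iDownT{} axiom producing $\dual{A}\parr A$ the clique is $\Delta_A$ by \cref{diatomsonly_cliques}, and feeding the atoms-only derivation of \cref{diatomsonly} through \cref{idown_tensor_morphism} exhibits $\Delta_A$ as a \permutation{} of a cross product of atomic diagonals with one factor per atom pair of $A$.

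For the inductive step I split on the bottommost rule, whose interpretation is $F_S$ applied to the base morphism of the rule for the surrounding context $S$. By \cref{same_cliques_with_contexts} the functor $F_S$ acts as the identity outside the hole, and by \cref{tensor_of_cliques} together with the componentwise action of $\tensor$ and $\llpar$ the global clique decomposes as the unchanged context clique cross-producted with the clique sitting in the hole; it therefore suffices to check the invariant for the hole-clique using the context-free lemmas. A \sigmaUpT{}, \sigmaDownT{}, \alphaUpT{}, \alphaDownT{} or \switchT{} step is interpreted by a \permutation{}, and a \permutation{} of a \permutation{} of a cross product of diagonals is again one, with $n$ unchanged. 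An \iDownT{} step preserves the shape and raises the factor count by the atom pairs of the introduced formula by \cref{idown_tensor_morphism} (for a compound introduced formula, combined with the base-case decomposition of $\Delta_A$). An \iUpT{} step preserves the shape and lowers the count by \cref{iup_same_cliques}; this is the case where \cref{diatomsonly_cliques} does real work, since it guarantees that two matched halves created by different axiom instances nevertheless have identical diagonal form, so that the Cartesian division of \cref{iup_same_cliques} is well defined.

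The main obstacle is the bookkeeping of the inductive step rather than any single hard computation: one must ensure that applying a rule deep inside a context never disturbs the cross-product decomposition recorded by the invariant, and that the running factor count stays synchronised with the number of atom pairs of the current formula, so that at the concluding line it equals $n$. These are exactly the services provided by \cref{same_cliques_with_contexts}, \cref{idown_tensor_morphism} and \cref{iup_same_cliques}; once the invariant is phrased as a matching of occurrences, the remaining verification for each rule is routine.
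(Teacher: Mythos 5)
Your proposal is correct and follows essentially the same route as the paper's own (much terser) proof: the paper likewise disposes of \fiUpT{}-free derivations via \cref{idown_tensor_morphism} and then combines \cref{same_cliques_with_contexts} with \cref{iup_same_cliques} to handle \fiUpT{}. You have merely made explicit the induction on inference rules, the treatment of the \permutation{} rules, and the role of \cref{diatomsonly_cliques} that the paper leaves implicit.
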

\begin{proof}
  For derivations without \fiUpT{} this is a result of \cref{idown_tensor_morphism}.
  Combining \cref{same_cliques_with_contexts} and \cref{iup_same_cliques}
  gives this as a corollary.
\end{proof}

\subsection{Sequent calculus derivations as cliques}\label{sequentmodel}

We may apply the ideas from the previous section to take derivations
in the sequent calculus and map them to cliques in~$\concat$.

Once again the process is recursive over the inference rules used,
and so we have to describe how to interpret each of those. The empty
sequent is again modelled by the unit~$I$ for $\llpar$. 

Above we explain how to interpret formulae, and we may extend this to
sequents by replacing comma by $\llpar$. Sequents do not come with a
bracketing structure, so we have to make a choice here, and we pick
bracketing from left to right once again as our interpretation. We expect every derivation
containing no assumptions to be interpreted by a clique in the
interpretation of the concluding sequent. Note that the non-linear
nature of derivations makes it less obvious how one might type a
derivation as a morphism. 

For the axiom rules \axT{} and \idT{} we choose the clique that is the image of $\{*\}$
under the morphism~\fDownT{\intp{A}}. For the exchange rule
note that we have a unique morphism consisting of pars of identities,
associativity and symmetry isomorphisms
such that \fExchT{A, B} is the result of:
\[
\begin{tikzcd}[column sep = tiny]
  \intp{\Gamma,A,B,\Delta} = F_S(\intp{\Gamma} \parr \intp{A}) \parr \intp{B}) \arrow[d,"F_S\fSigmaUp{}"]
  \\F_S(\intp{B} \parr (\intp{\Gamma} \parr \intp{A})) \arrow[d,"F_S\fAlphaUp{}"]
  \\F_S((\intp{B} \parr \intp{\Gamma}) \parr \intp{A}) \arrow[d,"F_S(\fSigmaUp{} \parr \intp{A})"]
  \\F_S((\intp{\Gamma} \parr \intp{B}) \parr \intp{A}) = \intp{\Gamma, B, A, \Delta}
\end{tikzcd}
\]
and we use the image of the clique of the derivation so far under
that \permutation{} to interpret the derivation resulting from adding
this rule.

In a similar way, for the $\llpar$ rule we note the existence of a unique \permutation{}
\fParT{A,B}:
\[
  \begin{tikzcd}[column sep = tiny]
    \intp{\Gamma,A,B,\Delta} = F_S(\intp{\Gamma} \parr \intp{A}) \parr \intp{B}) \arrow[d,"F_S\fSigmaUp{}"]
    \\F_S(\intp{B} \parr (\intp{\Gamma} \parr \intp{A})) \arrow[d,"F_S(\intp{B} \parr \fSigmaUp{})"]
    \\F_S(\intp{B} \parr (\intp{A} \parr \intp{\Gamma})) \arrow[d,"F_S\fAlphaUp{}"]
    \\F_S(\intp{B \parr A} \parr \intp{\Gamma}) \arrow[d,"F_S\fSigmaUp{}"]
    \\F_S(\intp{\Gamma} \parr \intp{B \parr A}) \arrow[d,"F_S(\intp{\Gamma} \parr \fSigmaUp{})"]
    \\F_S(\intp{\Gamma} \parr (\intp{A} \parr \intp{B})) = \intp{\Gamma, B, A, \Delta}
  \end{tikzcd}
\]
and we use the image of the clique interpreting the derivation so far
to interpret the derivation resulting from adding this rule.

For the $\tensor$ rule we may assume that the derivation so far has
given us cliques in $\intp{\Gamma,A}$ and ${\intp{B,\Delta} = F_S\intp{B}}$. From
these we need to create a clique in $\intp{\Gamma,A\tensor
  B,\Delta}$.

We know that the morphism for $\intp{\Gamma,A}$ can be expressed as the
composition of morphisms called~\( f_{\intp{\Gamma, A}}. \)
So we may use
\[ \intp{B} \otimes f_{\intp{\Gamma, A}} \after \rhoTensor{\intp{B}} \]
with the appropriate context functors to obtain a clique in
\[ F_S(\intp{B} \otimes (\intp{\Gamma} \parr \intp{A})) \]
to which we may apply
\[ \fSigmaSwitch{\intp{\Gamma}, \intp{A}, \intp{B}} \after \fSigmaDown{\intp{B}, \intp{\Gamma}, \intp{A}} \]
that obtains a clique in
\[ \intp{\Gamma, A \tensor B, \Delta} \]
as required:

\[
\begin{tikzcd}[column sep = tiny]
  \intp{B, \Delta} = F_S\intp{B} \arrow[d,"F_S\rhoTensor{}"]
  \\F_S(\intp{B}\otimes \intp{I}) \arrow[d,"F_S(\intp{B} \otimes f_{\intp{\Gamma, A}})"]
  \\F_S(\intp{B}\otimes (\intp{\Gamma} \parr \intp{A})) \arrow[d,"F_S\fSigmaDown{}"]
  \\F_S((\intp{\Gamma} \parr \intp{A}) \otimes \intp{B}) \arrow[d,"F_S\fSigmaSwitch{}"]
  \\F_S(\intp{\Gamma} \parr (\intp{A} \otimes \intp{B})) = \intp{\Gamma, A \otimes B, \Delta}
\end{tikzcd}
\]

\begin{corollary}\label{tensor_morphism}
  Let $a$ be the clique for a derivation of $\intp{\Gamma, A}$ and $b$ be the clique for $\intp{B, \Delta}.$
  Then the clique for \fTensorT{\intp{\Gamma, A}, \intp{B, \Delta}} is a \permutation{}
  of $a \times b$ in the obvious way.
\end{corollary}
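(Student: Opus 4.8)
The plan is to chase the clique $b$ through the composite of morphisms defining $\fTensor{\intp{\Gamma, A}, \intp{B, \Delta}}$---the vertical composite displayed immediately before the statement---and to observe that every constituent morphism either forms the Cartesian product of the two input cliques or is one of the structural \permutations{} of $\concat$, so that the whole composite rearranges the coordinates of $a \times b$ by a bijection.

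First I would deal with the context. By \cref{same_cliques_with_contexts} the context functor $F_S$ corresponding to $\Delta$ acts as the identity on the surrounding context, so the coordinates of $b$ coming from $\Delta$ are carried through unchanged while the $\intp{B}$-part is transformed exactly as it would be in the empty context. This reduces the computation to tracking the inner composite $\fSigmaSwitch{\intp{\Gamma}, \intp{A}, \intp{B}} \after \fSigmaDown{\intp{B}, \intp{\Gamma}, \intp{A}} \after (\intp{B} \otimes f_{\intp{\Gamma, A}}) \after \rhoTensor{\intp{B}}$.

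Next I would apply \cref{tensor_of_cliques} to the first two factors. Since $f_{\intp{\Gamma, A}}$ is by definition the morphism whose image of $\{*\}$ is the clique $a$, the map $\rhoTensor{\intp{B}}$ inserts a $*$ in the unit slot and $\intp{B} \otimes f_{\intp{\Gamma, A}}$ then tensors in $a$; this is precisely the computation carried out in \cref{tensor_inside_outside_isomorphism}, only with $\fDown{A}$ replaced by the general morphism $f_{\intp{\Gamma, A}}$. Hence after these two steps the clique is, up to the canonical reindexing of its coordinates, the product $a \times b$.

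Finally, $\fSigmaDown{\intp{B}, \intp{\Gamma}, \intp{A}}$ is the tensor symmetry and $\fSigmaSwitch{\intp{\Gamma}, \intp{A}, \intp{B}}$ is a \permutation{} by construction (see \cref{lemswitch}); composing these with the identity action on the $\Delta$-coordinates exhibits the final clique as a bijective rearrangement of $a \times b$, that is, a \permutation{} of it. The only delicate point---and the nearest thing to an obstacle---is the bookkeeping needed to pin down the exact bijection realised by the composite so that it matches the ``obvious'' reindexing of $a \times b$; but since \permutations{} are closed under composition, no genuine difficulty arises.
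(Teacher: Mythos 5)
Your proposal is correct and takes essentially the same approach as the paper: the paper's entire proof is the one-line remark that the corollary is a natural consequence of \cref{tensor_inside_outside_isomorphism}, and your clique chase through the displayed composite is precisely what that citation abbreviates. Your explicit notes---that the lemma's computation goes through with $\fDown{A}$ replaced by the arbitrary morphism $f_{\intp{\Gamma, A}}$, that the context is handled by \cref{same_cliques_with_contexts}, and that the remaining morphisms $\fSigmaDown{}$ and $\fSigmaSwitch{}$ are \permutations{}---simply spell out details the paper leaves implicit.
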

\begin{proof}
  This is a natural consequence of \cref{tensor_inside_outside_isomorphism}.
\end{proof}

It remains to interpret the cut rule.
From the $\otimes$ rule we know that given cliques in $\intp{\Gamma, A}$ and
$\intp{\dual{A}, \Delta}$, we can have a clique in
\[\intp{\Gamma, A \otimes \dual{A}, \Delta} = F_S\intp{\Gamma, A \otimes \dual{A}}.\]
Applying the appropriate context to \fSigmaUpT{\intp{\Gamma}, \intp{A \otimes \dual {A}}}
and \fiUpT{} results in a clique in
\[ F_S\intp\Gamma = \intp{\Gamma, \Delta}\]
as required:

\[
\begin{tikzcd}[column sep = tiny]
  \intp{A, \Delta} = F_S\intp{\dual{A}} \arrow[d,"F_S\rhoTensor{}"]
  \\F_S(\intp{\dual{A}}\otimes \intp{I}) \arrow[d,"F_S(\intp{\dual{A}} \otimes f_{\intp{\Gamma, A}})"]
  \\F_S(\intp{\dual{A}}\otimes (\intp{\Gamma} \parr \intp{A})) \arrow[d,"F_S\fSigmaDown{}"]
  \\F_S((\intp{\Gamma} \parr \intp{A}) \otimes \intp{\dual{A}}) \arrow[d,"F_S\fSigmaSwitch{}"]
  \\F_S(\intp{\Gamma} \parr (\intp{A} \otimes \intp{\dual{A}})) \arrow[d,"F_S\fSigmaUp{}"]
  \\F_S((\intp{A} \otimes \intp{\dual{A}}) \parr \intp{\Gamma}) \arrow[d,"F_S\fiUp{}"]
  \\F_S\intp{\Gamma} = \intp{\Gamma, \Delta}
\end{tikzcd}
\]

The need to operate `inside' the interpretation of a sequent is
reminiscent of operating inside a context, and this is no
coincidence.

\begin{lemma}\label{scatomsonly_cliques}
  Let \(A\) be some formula. Then the clique \(\Delta_A\) given in
  \cref{i_clique} is the clique for the formula \({\dual{A} \parr A}\) for both the
  \idT{} rule and the resulting clique after using the translation in
  \cref{scatomsonly} on \idT{}.
\end{lemma}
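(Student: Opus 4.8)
The statement packages two claims. The first---that the \idT{} rule itself is interpreted by $\Delta_A$---is immediate from the construction of the model: by definition the \idT{} (and \axT{}) rule is interpreted by the morphism \fDownT{\intp{A}}, whose value on the clique $\{*\}$ of $I$ is exactly $\Delta_A$, as recorded by \cref{i_clique}. All the work is therefore in the second claim, that the atomic translation of \cref{scatomsonly} also produces $\Delta_A$. The plan is to mirror the proof of \cref{diatomsonly_cliques} with a structural induction on $A$ following the four cases of \cref{scatomsonly}, showing at each step that the interpreting clique stays of the diagonal shape of \cref{i_clique}.

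For the base case $A = \alpha$ the translated derivation is a single \axT{}, interpreted as above by \fDownT{\intp{\alpha}} and hence giving $\Delta_\alpha$. For $A = \dual{F}$ the translation appends one \exchT{} rule to the (inductively handled) derivation of $\ttile \dual{F}, F$; since \exchT{} is interpreted by a \permutation{} and $\Delta_{\dual{F}}$ and $\Delta_F$ are literally the same set of pairs $\{(r,r)\}$ on the shared carrier, the hypothesis $\Delta_F$ is carried to $\Delta_{\dual{F}} = \Delta_A$. For $A = B \otimes C$ (and dually $A = B \parr C$) the hypothesis supplies the cliques $\Delta_B$ and $\Delta_C$ for the two subderivations, and the remaining rules are one \otimesT{} together with \exchT{} and \parT{} rules. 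By \cref{tensor_morphism} the \otimesT{} rule yields a \permutation{} of $\Delta_B \times \Delta_C$, and the subsequent \exchT{} and \parT{} steps are themselves \permutations{}, so the resulting clique is some \permutation{} of $\Delta_B \times \Delta_C$.

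The one point requiring care, rather than mere bookkeeping, is that this particular composite \permutation{} sends $\Delta_B \times \Delta_C$ to precisely $\Delta_{B \otimes C}$. I would settle it by tracking a generic element $((b,b),(c,c))$ of $\Delta_B \times \Delta_C$ through the rules of \cref{scatomsonly}: the \otimesT{} step produces the tuple recording $b$ at the $\dual{B}$ position, $(b,c)$ at the $B \otimes C$ position and $c$ at the $\dual{C}$ position, and the ensuing \exchT{} and \parT{} steps regroup this to $((b,c),(b,c))$, which is exactly a typical element of $\Delta_{B \otimes C}$ over the carrier $R_B \times R_C$. Conceptually this is forced: every \permutation{} is the canonical Cartesian rearrangement determined by its source and target arrangements, so the composite can only be the middle-swap isomorphism identifying $\Delta_B \times \Delta_C$ with $\Delta_{B \otimes C}$. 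The $B \parr C$ case is identical after dualising, completing the induction. Thus the main obstacle is purely the verification that the fixed sequence of structural rules in \cref{scatomsonly} realises the intended rearrangement, which the element computation confirms.
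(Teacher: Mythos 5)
Your proof is correct and follows essentially the same route as the paper: an induction over the four cases of \cref{scatomsonly}, using \cref{tensor_morphism} for the \otimesT{} step and the fact that \exchT{} and \parT{} are interpreted by \permutations{}. The only difference is that you explicitly track a generic element $((b,b),(c,c))$ to confirm the composite \permutation{} lands exactly on $\Delta_{B\otimes C}$, a verification the paper compresses into ``it should be easy to see.''
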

\begin{proof}
  It should be easy to see by \cref{tensor_morphism} that all of the cases in \cref{scatomsonly}
  result in a clique of the form in \cref{i_clique}
  because the only rules used are \axT{}, \otimesT{} or rules that have morphisms that are only \permutations{}.
\end{proof}

\begin{corollary}\label{all_sc_cliques_permutations}
  All $\mllm{SC}$ cliques are a \permutation{} of
  \[ \Delta_{A_1} \times \dots \times \Delta_{A_n} \]
  where $n$ is the number of atom pairs introduced.
\end{corollary}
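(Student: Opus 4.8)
The plan is to mirror the proof of \cref{all_di_cliques_permutations}, running an induction on the structure of the $\mllm{SC}$ derivation and analysing the last rule applied. The guiding observation is that every clique-level operation attached to an $\mllm{SC}$ rule falls into one of three types: it is a \permutation{}, it forms a Cartesian product of two cliques, or it is the \fiUpT{} operation buried inside the cut rule. Each of the first two preserves the shape ``a \permutation{} of a product $\Delta_{A_1}\times\dots\times\Delta_{A_n}$'', and the third contracts it, so the invariant survives all five rules.

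For the base case I would appeal to \cref{scatomsonly_cliques} (and ultimately \cref{i_clique}): the clique assigned to \axT{} or \idT{} is exactly $\Delta_{\intp{A}}$, a product with a single factor. For \exchT{} and \parT{} the interpreting morphisms \fExchT{} and \fParT{} are, by construction, composites of symmetry and associativity isomorphisms, hence \permutations{}; applying a \permutation{} to a \permutation{} of $\Delta_{A_1}\times\dots\times\Delta_{A_n}$ yields another \permutation{} of the same product, leaving the count unchanged. For \otimesT{}, \cref{tensor_morphism} shows the resulting clique is a \permutation{} of $a\times b$, and since by induction $a$ and $b$ are \permutations{} of products of the $\Delta_{A_i}$, so is their product, with the two counts added. \cref{same_cliques_with_contexts} lets each of these steps be transported through the surrounding context functor $F_S$.

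The one substantive case is \cutT{}. Its interpretation is the tensor construction followed by \fSigmaUpT{} and then \fiUpT{}. By \cref{tensor_morphism} and the induction hypothesis the clique entering this block is a \permutation{} of a product of the $\Delta_{A_i}$; then \fSigmaUpT{} is a \permutation{} and so preserves this shape; and the situation is now exactly the hypothesis of \cref{iup_same_cliques}, whose conclusion is that after \fiUpT{} the clique is again a \permutation{} of a product of $\Delta_{A_i}$, with strictly fewer factors. Assembling the five cases closes the induction, exactly as for the deep inference version.

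I expect the only real obstacle to be bookkeeping on the factor count, reconciling it with the phrase ``the number of atom pairs introduced''. The \cutT{} case deletes the factors corresponding to the atoms of the cut formula via the Cartesian division of \cref{iup_same_cliques}, so I would strengthen the inductive statement to track that the surviving factors correspond precisely to the axiom-linked atom pairs present in the current sequent; for a cut-free derivation this count is literally the number of axioms applied, and in general the discarded factors are exactly those identified by \fiUpT{}.
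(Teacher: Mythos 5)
Your proposal is correct and follows essentially the same route as the paper: the paper's own (very terse) proof handles cut-free derivations via \cref{tensor_morphism} and handles \cutT{} by re-using the reasoning of \cref{all_di_cliques_permutations}, i.e.\ exactly the combination of \permutation{} steps, \cref{same_cliques_with_contexts}, and \cref{iup_same_cliques} that you spell out case by case. Your version merely makes the induction and the factor-count bookkeeping explicit, which the paper leaves implicit.
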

\begin{proof}
  For derivations without \cutT{} this is a result of \cref{tensor_morphism}.
  For the \cutT{} morphism we can apply the exact same reasoning from \cref{all_di_cliques_permutations}.
\end{proof}

\subsection{Equivalence of Transformations}

Note that almost all the morphisms chosen for the various inference
rules map cliques as simple \permutations{}. The only morphisms
that are not \permutations{} are \fiUpT{}, \fiDownT{}, \fCutT{} and
\fTensorT{}. This is a property that we take advantage
of in the following proofs.

We tackle the proof that the interpretations are preserved by the
translations given in \cref{sctodi,ditosc_direct,ditosc,equiv_cut_elimination}.

\begin{theorem}\label{i_dont_care_what_your_transformation_is}
  Any two $\mllm{}$ derivations of the same formula will produce the same cliques.
\end{theorem}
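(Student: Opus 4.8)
The plan is to deduce the statement from the two normal-form results \cref{all_di_cliques_permutations} and \cref{all_sc_cliques_permutations}, reading it (as the preceding paragraph suggests) as invariance of the associated clique under the translation procedures $\transdisc$, $\transscdi$, $\transdiscalt$ and $\transcut$ of \cref{sctodi,ditosc_direct,ditosc,equiv_cut_elimination}. Those two corollaries already tell us that \emph{every}\/ clique arising from a derivation of a formula $A$, in either system, is a \permutation{} of the standard product $\Delta_{A_1}\times\dots\times\Delta_{A_n}$, where the $\Delta_{A_i}$ are the diagonal cliques of \cref{i_clique} attached to the $n$ atom pairs of $A$. Since both systems land in the same shape, the theorem reduces to checking that each translation step leaves the resulting clique unchanged; I would also first normalise $i\downarrow$ to \aiDownT{} using \cref{diatomsonly_cliques,scatomsonly_cliques}, so that the only atom-creating rule to track is the atomic one.

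First I would dispose of the structural bureaucracy. Almost every rule morphism---symmetry, associativity, exchange, the $\llpar$ rule, and switch (\cref{lemswitch})---is a \permutation{}, and \permutations{} are closed under composition and under the context functors $F_S$ by \cref{same_cliques_with_contexts}. Hence for the three link-preserving translations $\transscdi$, $\transdisc$ and $\transdiscalt$ it suffices to observe that each sends a non-structural rule (axiom, tensor, cut/$i\uparrow$) to a block whose only non-\permutation{} constituent is the corresponding non-structural rule in the target, sitting in the same position relative to the atom pairs it connects. The rule-count identities already recorded for $\transscdi$ and $\transdiscalt$ make this correspondence precise, while \cref{tensor_morphism,tensor_inside_outside_isomorphism,idown_tensor_morphism} pin down the effect of the tensor and atomic blocks on cliques as exactly the Cartesian product $\Delta_{A_1}\times\dots\times\Delta_{A_n}$, independently of the surrounding permutations. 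So these translations act as the identity on cliques.

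The genuinely delicate case, which I expect to be the main obstacle, is the cut rule together with its elimination transformation $\transcut$. Here the axiom linking is \emph{not}\/ preserved: eliminating a cut can reconnect atoms created by different instances of the axiom, so one cannot argue from an unchanged set of links (this is precisely the phenomenon in the highlighted $((\textbf{a}\parr\dual{\textbf{a}})\otimes(a\parr\dual{a}))$ example). The plan is to run through the five reduction cases of \cref{equiv_cut_elimination} and show each is an \emph{equality}\/ of cliques. The commuting cases (cut against $\llpar$, against $\otimes$, or against an axiom on a non-cut formula) reduce to naturality of the structural isomorphisms plus \cref{tensor_morphism}, and so preserve the clique automatically. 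The principal case---a cut whose premises are a $\llpar$ and a matching $\otimes$ on the cut formula---is where the content lies: the morphism \fiUpT{} identifies two halves $a,a'$ that may come from distinct instances $\Delta_{A_i}$ and $\Delta_{A_j}$ of the same atom's space, and \cref{iup_same_cliques} shows, via the Cartesian-division identity $\{(a,a)\suchthat (a,a)\in\Delta_{A_i},\ (a,a)\in\Delta_{A_j}\}=\{(a,a)\suchthat a\in\Delta_{A_i}\}$, that the clique after the cut is exactly the one obtained after the reduction.

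Assembling the pieces, every clique is a \permutation{} of the standard product by \cref{all_di_cliques_permutations,all_sc_cliques_permutations}, and the step-by-step invariance above shows that none of the four translations alters it. The hard part throughout is the cut/$i\uparrow$ analysis, since it is the only place where the underlying identifications---and hence potentially the clique itself---can change; the whole argument ultimately rests on the Cartesian-division computation of \cref{iup_same_cliques} forcing any reconnected links back to the canonical diagonal form, so that the clique depends only on $A$ and the fixed valuation and not on the route taken to derive it.
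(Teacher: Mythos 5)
There is a genuine gap, and it lies in your opening reduction. You read the theorem as invariance of cliques under the four translation procedures, and then set out to verify that each translation step preserves the clique. But the statement quantifies over \emph{arbitrary}\/ pairs of derivations of the same formula, and two such derivations need not be related by any of $\transscdi$, $\transdisc$, $\transdiscalt$ or $\transcut$: the paper's own example of $((\textbf{a}\parr\dual{\textbf{a}})\otimes(a\parr\dual{a}))$ versus $((\textbf{a}\parr\dual{a})\otimes(a\parr\dual{\textbf{a}}))$ exhibits two derivations of the same formula with genuinely different axiom linkings, and none of the translations carries one to the other (they stay within one system and the translations relate a derivation only to its own image). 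So even if every step of your programme were carried out, you would have established only the downstream results \cref{equiv_sc,equiv_di_direct,equiv_di,equiv_cut} --- which the paper deduces \emph{from}\/ this theorem, not the other way round --- and not the theorem itself. The reduction is backwards.

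What is missing is exactly the short argument the paper gives after the normal-form corollaries, and you already hold all the ingredients for it. By \cref{all_di_cliques_permutations,all_sc_cliques_permutations} every clique, from either system, is a \permutation{} of $\Delta_{A_1}\times\dots\times\Delta_{A_n}$. Two further observations then finish the proof directly: first, the factors are determined by the atoms alone, since distinct axiom instances of the same atom are interpreted by the \emph{same}\/ concordance space, so their diagonals $\Delta_{A_i}$ are literally equal --- this is precisely why differing linkings cannot be detected; second, the \permutation{} itself is forced by the bracketing of the formula being derived, since it must rearrange the product into that formula's parse structure. Hence the clique is a function of the formula and the valuation only, independent of which derivation produced it. Your instinct that the cut/$i\uparrow$ Cartesian-division computation of \cref{iup_same_cliques} is the crux is correct insofar as it is what makes the normal-form corollaries true in the presence of cuts; but once those corollaries are in hand, the theorem follows in two sentences, with no case analysis of translations at all.
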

\begin{proof}
  From \cref{all_di_cliques_permutations,all_sc_cliques_permutations}
  we can see that any clique will be a \permutation{} of 
  \[ \Delta_{A_1} \times \dots \times \Delta_{A_n} \]
  where $n$ corresponds to half the number of atoms in the formula.
  Just as each \axT{} of the same atom is indistinguishable from any other \axT{},
  each $\Delta_{A_i}$ of the same atom, or even formula, are equal.
  The bracketing of any \permutation{} of $c_1$ and $c_2$ depends entirely on how the formula itself is bracketed
  and so both cliques must be equal.
\end{proof}

\begin{remark}
  If one chooses to keep track of which atoms were created together, then
  the cliques are isomorphic instead. Note that if the coherence space for an
  atom \(\alpha\) is isomorphic to that of the coherence space for an atom \(\beta\)
  then cliques where the only difference between them are the labelling of each
  atom will be isomorphic too.
  To prevent this, different atoms must be assigned to cliques that are not
  isomorphic: their sets must have different cardinalities.
\end{remark}

\begin{proposition}\label{equiv_sc}
  Let $\deriv{}$ be a sequent calculus derivation and
  let $c_{SC}$ be the clique corresponding to $\deriv{}$. Then the
  clique $c_{DI}$ corresponding to $\transscdi\deriv{}$ is
  equal to $c_{SC}$.
\end{proposition}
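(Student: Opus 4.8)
The plan is to reduce the statement to the general \cref{i_dont_care_what_your_transformation_is}, the only genuine work being to set up the identification of spaces and atom pairs that licenses its use. First I would observe that, by \cref{sctodi}, the translated derivation $\transscdi\deriv{}$ concludes with the formula $[\Gamma]$, and that the concordance space $\intp{[\Gamma]}$ interpreting this formula is \emph{identical}\/ to the space $\intp{\Gamma}$ used in \cref{sequentmodel} to interpret the sequent $\Gamma$: in both cases one replaces every comma by $\llpar$ and brackets from the left. Hence $c_{SC}$ and $c_{DI}$ are cliques of one and the same concordance space, so that asking whether they coincide is meaningful.

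Next I would check that the translation preserves the atom-pair bookkeeping. The size equalities recorded immediately after \cref{sctodi}, namely $\proofsize{\proofid{\deriv{}}}=\proofsize{\proofiDown{\transscdi\deriv{}}}$, $\proofsize{\proofax{\deriv{}}}=\proofsize{\proofaiDown{\transscdi\deriv{}}}$ and $\proofsize{\proofcut{\deriv{}}}=\proofsize{\proofiUp{\transscdi\deriv{}}}$, come from the explicit rule-by-rule translation and exhibit a bijection sending each \idT{}/\axT{} of $\deriv{}$ to an \iDownT{}/\aiDownT{} of $\transscdi\deriv{}$ on the very same atom, and each \cutT{} to an \iUpT{}. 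Consequently the two derivations introduce the same number $n$ of atom pairs and build from the same multiset of blocks $\Delta_{A_1},\dots,\Delta_{A_n}$.

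With this in place I would apply \cref{all_sc_cliques_permutations} to $c_{SC}$ and \cref{all_di_cliques_permutations} to $c_{DI}$: each clique is a \permutation{} of the \emph{same}\/ product $\Delta_{A_1}\times\dots\times\Delta_{A_n}$. Because the particular \permutation{} is fixed entirely by the bracketing of the common concluding formula $[\Gamma]=\intp{\Gamma}$, with no remaining freedom, the two subsets are literally equal; this is exactly the conclusion of \cref{i_dont_care_what_your_transformation_is}, and it yields $c_{SC}=c_{DI}$.

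I expect the only delicate point to be the cut/\iUpT{} bookkeeping. Unlike the symmetry, associativity, \parT{} and \exchT{} rules, which act as pure \permutations{}, the \cutT{} rule and its image \iUpT{} are the non-\permutation{} steps that perform the `Cartesian division' of \cref{iup_same_cliques}, collapsing a factor $\Delta_A\times\Delta_{\dual{A}}$. The argument must confirm that a cut in $\deriv{}$ and the corresponding \iUpT{} in $\transscdi\deriv{}$ delete matching factors, so that the surviving product and its induced bracketing agree on the two sides; this is precisely what \cref{iup_same_cliques} delivers, with \cref{diatomsonly_cliques} ensuring that the two halves being identified have the same shape even when one arose from \axT{} and the other from \idT{}. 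Should one prefer a self-contained argument, the alternative is a direct structural induction over the five cases of \cref{sctodi}, checking at each step that the sequence of deep inference rules has the same effect on the clique as the single sequent rule; there the only cases requiring care are \otimesT{} (via \cref{tensor_morphism}) and \cutT{} (via \cref{iup_same_cliques}), all others being \permutations{}.
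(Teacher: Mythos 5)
Your proposal is correct and follows essentially the same route as the paper: the paper's own proof simply invokes \cref{i_dont_care_what_your_transformation_is} (observing, as an alternative, that the sequent calculus morphisms are defined identically to the steps of $\transscdi$ in \cref{sctodi}), which is exactly your main argument and your fallback direct induction, respectively. Your additional scaffolding---identifying $\intp{[\Gamma]}$ with $\intp{\Gamma}$, the atom-pair bijection, and the cut/\iUpT{} bookkeeping via \cref{iup_same_cliques}---is a more careful unpacking of why that theorem applies, but introduces no genuinely different idea.
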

\begin{proof}
  We can of course use \cref{i_dont_care_what_your_transformation_is},
  but we could otherwise notice that
  all the sequent calculus morphisms are defined the same as the transformation $\transscdi$ in \cref{sctodi}.
\end{proof}

\begin{proposition}\label{equiv_di_direct}
  If $c$ is the clique corresponding to a deep inference
  derivation $\deriv{}$ then it is also the clique corresponding to
  the sequent calculus derivation $\transdiscalt\deriv{}$.
\end{proposition}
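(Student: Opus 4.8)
The plan is to follow the template of \cref{equiv_sc}. The quickest route is to appeal to \cref{i_dont_care_what_your_transformation_is} directly: the deep inference derivation $\deriv{}$ concludes with a formula $A$, and by \cref{ditosc_direct} its translation $\transdiscalt\deriv{}$ is a sequent calculus derivation of $\ttile A$, which is interpreted as the very same formula $A$. Since $\deriv{}$ and $\transdiscalt\deriv{}$ are thus both $\mllm{}$ derivations of $A$, \cref{i_dont_care_what_your_transformation_is} shows they determine the same clique, namely $c$.

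For a more explicit argument that does not hide all the work inside \cref{i_dont_care_what_your_transformation_is}, I would instead induct on the construction of $\transdiscalt$ as laid out in \cref{ditosc_direct}, checking that appending each $\mllm{DI}$ rule and then translating leaves the interpreting clique unchanged. First note that $\transdiscalt$ begins by replacing every \iDownT{} with \aiDownT{} via \cref{diatomsonly}; by \cref{diatomsonly_cliques} this preprocessing does not alter the clique. For the base case \aiDownT{}, the deep inference interpretation is \fiDownT{}, which by \cref{idown_tensor_morphism} adjoins a factor $\Delta_\alpha$, while the translated block \axT{}, \parT{}, \otimesT{} is interpreted, by \cref{tensor_morphism} together with \cref{scatomsonly_cliques}, so as to adjoin the identical factor $\Delta_\alpha$; hence both sides remain a \permutation{} of the same product $\Delta_{A_1}\times\dots\times\Delta_{A_n}$.

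For the structural rules \switchT{}, \sigmaUpT{}, \sigmaDownT{}, \alphaUpT{} and \alphaDownT{}, neither the original deep inference step nor its translation creates a new axiom link or a cut; consequently both transport the clique as a \permutation{} of the running product, with the bracketing dictated solely by the shape of the ambient formula, exactly as in the analysis underlying \cref{all_di_cliques_permutations,all_sc_cliques_permutations}. The one genuinely non-permutation case is \iUpT{}, which $\transdiscalt$ turns into an honest \cutT{}. Here the decisive observation is that the sequent calculus cut is interpreted, precisely as in the diagram of \cref{sequentmodel}, through \fSigmaUpT{} followed by \fiUpT{}, so it uses the very same $\fiUp{}$ morphism that interprets \iUpT{} in $\mllm{DI}$; by \cref{iup_same_cliques} both therefore perform the same ``Cartesian division'' collapsing the two matching $\Delta$-factors to the shared underlying clique.

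I expect this final \iUpT{}/\cutT{} step to be the main obstacle, as it is the only point where both translations employ a non-permutation morphism. The care needed is to confirm that the cut formula selected by $\transdiscalt$---the distinguished occurrence that the construction in \cref{ditosc_direct} guarantees was never cut out---is exactly the occurrence whose interpretation feeds the $\fiUp{}$ input on the deep inference side, so that the two Cartesian divisions identify the same pair of factors. Once this is checked, \cref{all_di_cliques_permutations,all_sc_cliques_permutations} guarantee that the resulting cliques coincide as \permutations{} of $\Delta_{A_1}\times\dots\times\Delta_{A_n}$, completing the induction.
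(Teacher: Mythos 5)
Your proposal is correct and takes essentially the same route as the paper: your explicit induction---\permutations{} for the structural rules, \cref{idown_tensor_morphism}/\cref{tensor_inside_outside_isomorphism} for \iDownT{}, and treating the translated \cutT{} as a \permutation{} followed by the same \fiUpT{} morphism via \cref{iup_same_cliques}---is precisely the paper's own direct proof, which it deliberately gives ``without \cref{i_dont_care_what_your_transformation_is} as it is interesting in its own right,'' while your quick first paragraph is the shortcut the paper itself acknowledges and uses for \cref{equiv_di,equiv_cut}. Your explicit appeal to \cref{diatomsonly_cliques} for the \iDownT{}-to-\aiDownT{} preprocessing makes a step explicit that the paper's proof leaves implicit, but this is a refinement, not a divergence.
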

\begin{proof}
  We will give the following proof without \cref{i_dont_care_what_your_transformation_is}
  as it is interesting in its own right.

  We do this inductively on the number of rules in $\deriv{}$.

  The base case is trivial, as both cliques are defined as being equal
  for both deep inference and sequent calculus.

  For the inductive step, consider that we already have derivations $\deriv{0}$ and $\transdiscalt\deriv{0}$
    with clique~$c_0$.
  Notice that the deep inference clique $c$ resulting from any of the inference rule morphisms
    \fAlphaParT{}, \fAlphaTensorT{}, \fSigmaParT{}, \fSigmaTensorT{} and \fSwitchT{}
    is a \permutation{} of~$c_0$.
  Since \fExchT{} and \fParT{} are compositions of the above morphisms, they too are \permutations{}.
  Additionally, notice that
    the corresponding transformations to $\transdiscalt\deriv{0}$ in \cref{ditosc_direct}
    only restructure the derivation branches, reorder \parT{} rules and add \exchT{} rules.
  All of these changes to the derivation tree are, or correspond to, \permutations{}.
  Therefore, the cliques are equal.

  Thus, we only have to consider the following rules in closer detail,
  and, by \cref{same_cliques_with_contexts},
    we only need to consider the cases without contexts.
  \begin{itemize}
    \item \iDownT{}:
      By hypothesis, we have a clique $b$ which becomes $b \times \Delta_A$ after the morphism for \iDownT{}.
      By \cref{tensor_inside_outside_isomorphism},
        this is exactly the clique we get in \cref{ditosc_direct}
        because $\Gamma,$ $\Gamma',$ $\deriv{e}$ and $\deriv{1}$ are empty when we have no contexts.
      Note that with contexts,
        changing the sequent directly above, for example, $\deriv{1}$ just changes the arguments to each morphism.
      For example, if $\deriv{1}$ has an \exchT{} rule inside of it,
        then there will be a corresponding
        \[ \fExch{\intp{B}, \intp{C}} \] morphism.
      This morphism will be composed from \permutation{} morphisms including an
      associativity \permutation{} so
        \[ \fAlphaPar{\intp{A}, \intp{B}, \intp{C}} \]
        changes its arguments to
        \[ \fAlphaPar{\intp{A}, \intp{B \otimes (\dual{A} \parr A)}, \intp{C}}. \]
    \item \iUpT{}:
      By hypothesis, we have some clique $c$ as a result of both $\deriv{0}$ and $\transdiscalt\deriv{0}$.
      In deep inference, we then apply \fiUpT{}.
      In sequent calculus, we replace a \otimesT{} with \cutT{},
        which has the same effect of applying 
        a permutation followed by \fiUpT{}.
      Sequent calculus also applies and removes any \permutations{} necessary
        to get the formula, and therefore clique, into the right form.
      As above, transforming the input sequents to $\deriv{1}$ and $\deriv{2}$ just changes the arguments to each morphism.
  \end{itemize}
\end{proof}

\begin{proposition}\label{equiv_di}
If there is a clique corresponding to a deep inference derivation $\deriv{}$
  then it is also the clique corresponding to the sequent calculus
  derivation $\transdisc\deriv{}$.
\end{proposition}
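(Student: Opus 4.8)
The plan is to derive this as an almost immediate consequence of \cref{i_dont_care_what_your_transformation_is}. By \cref{ditosc} the translation $\transdisc\deriv{}$ is a bona fide $\mllm{SC}$ derivation of $\ttile A$, where $A$ is the formula that $\deriv{}$ derives in $\mllm{DI}$. Since we interpret the one-formula sequent $\ttile A$ exactly as we interpret the formula $A$, the derivations $\transdisc\deriv{}$ and $\deriv{}$ both conclude with (the interpretation of) the same formula. \cref{i_dont_care_what_your_transformation_is} asserts that any two $\mllm{}$ derivations---in either system---of the same formula produce the same clique, and so the clique for $\transdisc\deriv{}$ coincides with the clique for $\deriv{}$. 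This is the route I would actually present.

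For a self-contained argument that does not route through \cref{i_dont_care_what_your_transformation_is}, in the spirit of the proof of \cref{equiv_di_direct}, I would instead induct on the number of rules in $\deriv{}$. The base case is handled as before, since the clique of an axiom is $\Delta_A$ in both systems. For the inductive step I would examine the block that $\transdisc$ produces for each deep inference rule $\rho$ taking $S\{P\}$ to $S\{Q\}$: an auxiliary derivation of $\ttile \dual{P}, Q$ (\cref{fig:pq_proof_i,fig:pq_proof_switch,fig:pq_proof_assoc,fig:pq_proof_comm}), the context-building steps of \cref{fig:pq_to_spsq} yielding $\ttile \dual{S\{P\}}, S\{Q\}$, and a final $\cutT{}$ against the inductive hypothesis $\ttile S\{P\}$. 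The task would be to show that this whole block has the same effect on cliques as the single $\mllm{DI}$ morphism interpreting $\rho$.

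The main obstacle is controlling the clique as it passes through this cut-heavy block. By \cref{all_sc_cliques_permutations} the clique of the auxiliary derivation of $\ttile \dual{P}, Q$ is a \permutation{} of a product of $\Delta$-cliques, and since $\dual{P}$ and $P$ are duals, the $\Delta$ factors coming from $\dual{P}$ match those inside $S\{P\}$; at the $\cutT{}$ these matched factors cancel by the Cartesian-division phenomenon already identified in \cref{iup_same_cliques}. I would then verify that, after this cancellation, the surviving factors are precisely a \permutation{} of the product that the direct $\mllm{DI}$ morphism for $\rho$ produces---invoking \cref{tensor_inside_outside_isomorphism} for the $\iDownT{}$ and $\iUpT{}$ cases and the fact that every other constituent of the block is itself a \permutation{}. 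The bookkeeping for the associativity and context-building steps is routine but tedious, which is exactly why the appeal to \cref{i_dont_care_what_your_transformation_is} is the preferred route.
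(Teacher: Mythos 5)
Your proposal is correct and takes essentially the same approach as the paper: its proof likewise dispatches the statement by appealing to \cref{i_dont_care_what_your_transformation_is}, and then sketches a direct verification (on the \iUpT{} case) in which the cut against the auxiliary derivation of ${\ttile \dual{P}, Q}$ cancels matched $\Delta$-factors, exactly the Cartesian-division phenomenon you invoke. Your added observation that \cref{ditosc} guarantees $\transdisc\deriv{}$ derives the same formula is a small but genuine tightening of the appeal to that theorem.
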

\begin{proof}
  We can and will use \cref{i_dont_care_what_your_transformation_is} for this
  as proving this directly is a long and repetitive process. We give a sample of
  how to prove this directly on the \iUpT{} rule for the curious reader that wishes
  to understand how this may be done.

  For each inference rule $\rho\frac{S\{P\}}{S\{Q\}}$ from the formula $S\{P\}$
  to $S\{Q\}$, we do this inductively on the size of the context $S$.
  Informally, what is needed is to show the cliques for the proofs in
  this cube~commute:
   \[
    \begin{tikzpicture}
      \begin{scope}
        [scale=0.8]
        \node (PiSP)     at ( 0, 10,  0) {\(\deriv{P}\)};
        \node (DeltaSP)  at ( 0,  0,  0) {\(\transdiscalt\deriv{P}\)};
        \node (PiSPB)    at ( 0, 10, 10) {\(\deriv{S\{P\}}\)};
        \node (DeltaSPB) at ( 0,  0, 10) {\(\transdiscalt\deriv{S\{P\}}\)};
        \node (PiSQ)     at (10, 10,  0) {\(\deriv{Q}\)};
        \node (DeltaSQ)  at (10,  0,  0) {\(\transdiscalt\deriv{Q}\)};
        \node (PiSQB)    at (10, 10, 10) {\(\deriv{S\{Q\}}\)};
        \node (DeltaSQB) at (10,  0, 10) {\(\transdiscalt\deriv{S\{Q\}}\)};
        \foreach \side/\from/\to/\label in%
        {above/PiSPB/PiSQB/\(F_Sf^\rho\),
         above/PiSP/PiSPB/\(F_S\),
         above/PiSP/PiSQ/\(f^\rho\),
         above/PiSQ/PiSQB/\(F_S\),
         above/DeltaSPB/DeltaSQB/\(F_S\fCut{\intp{P}, \intp{\dual{P}, Q}}\),
         above/DeltaSP/DeltaSPB/\(F_S\),
         above/DeltaSP/DeltaSQ/\(\fCut{\intp{P}, \intp{\dual{P}, Q}}\),
         above/DeltaSQ/DeltaSQB/\(F_S\),
         left/PiSPB/DeltaSPB/=,
         left/PiSP/DeltaSP/=,
         left/PiSQ/DeltaSQ/=,
         left/PiSQB/DeltaSQB/=
        }
        {\draw[-{Latex[length=3mm]}, shorten >= 5pt, shorten <= 5pt]
        (\from)->(\to) node[midway,\side]{\label}; }
      \end{scope}
    \end{tikzpicture}
  \]
  Because the inductive step follows from \cref{same_cliques_with_contexts},
    we only need to consider the base case when $S\{P\} = P$ and $S\{Q\} = Q$.

  Consider the proofs in \cref{fig:pq_proof_i,fig:pq_proof_switch}.
  Notice that they are all created from the \idT{}, \otimesT{}, \exchT{} and
  \parT{} rules.
  Therefore, all of the cliques for these proofs will be isomorphic to either
    $\Delta_A \times \Delta_B$ or $\Delta_A \times \Delta_B \times \Delta_C$.

  Now, taking $\rho$ to be \iUpT{} we show that the cliques are the same.
      Let $c$ be the clique for \[P = (A \otimes \dual{A}) \parr B.\]
      Then in deep inference the clique for $Q = B$
        will be \[ \{b \suchthat \exists\ a \in A:\ ((a, a), b) \in c\}. \]

      For sequent calculus,
        we use the \cutT{} morphism on the clique $c$ and the clique, call it
        \(d\), that is isomorphic to $\Delta_A \times \Delta_B$.
      After the \cutT{} morphism we will get the clique
        \begin{align*}
          \{b \suchthat \exists\ ((a, a), b) \in (A \otimes \dual{A}) \parr B:\ &((a, a),
        b) \in c,
          \\&((a, a), (b, b)) \in d \}
        \end{align*}
        equal to
        \[ \{ b \suchthat \exists\ a, b:\ ((a, a), b) \in c, ((a, a), (b, b)) \in d \} \]
        which is the same as
          \[ \{ b \suchthat \exists\ a \in A:\ ((a, a), b) \in c \}. \]
\end{proof}

\begin{proposition}\label{equiv_cut}
  If $c$ is the clique corresponding to a sequent calculus
  derivation $\deriv{}$ then it is also the clique corresponding to
  the sequent calculus derivation $\transcut\deriv{}$.
\end{proposition}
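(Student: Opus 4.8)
The plan is to obtain this immediately from \cref{i_dont_care_what_your_transformation_is}. The cut elimination procedure $\transcut$ of \cref{equiv_cut_elimination} only rewrites a derivation into another derivation of the \emph{same}\/ endsequent, since each of the five reduction steps listed there leaves the conclusion untouched. Hence $\deriv{}$ and $\transcut\deriv{}$ are both $\mllm{SC}$ derivations of one and the same sequent $\Gamma$, and \cref{i_dont_care_what_your_transformation_is} tells us at once that they are interpreted by the same clique. This is the short argument I would give as the actual proof.

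For the curious reader I would also indicate a direct argument, paralleling the style of \cref{equiv_di_direct}, by inducting on the number of reduction steps performed by $\transcut$. It suffices to show that a single reduction step preserves the clique, and by \cref{same_cliques_with_contexts} one may discard the surrounding derivation and treat each local rewrite in isolation. There are then five cases, matching the five cases in the proof of \cref{equiv_cut_elimination}.

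The axiom case and the two permutative cases are routine. In the axiom case a cut against \idT{} (or \axT{}) composes the denotation with the identity-like clique $\Delta_A$ of \cref{i_clique}, which acts neutrally, so the clique is unchanged. In the permutative cases --- \parT{} or \otimesT{} applied to the formula that is \emph{not}\/ cut --- the cut morphism is merely transported past a \parT{} or \otimesT{} introduction; since the former is a \permutation{} and the latter is governed by \cref{tensor_morphism}, and since relation composition is associative, the two orders of application yield the same direct image of the clique.

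The main obstacle is the pair of \emph{key}\/ cases, in which the cut formula is itself a $\tensor$/$\llpar$ pair and one cut is replaced by two cuts on the components. Here I would have to check that the composite built from the two smaller cuts, the intervening \exchT{} rules, and the tensor introduction produces the same clique as the original single cut applied after the \otimesT{} rule. By \cref{all_sc_cliques_permutations} every clique in sight is a \permutation{} of a product of $\Delta_{A_i}$'s, so the verification reduces to tracking how the underlying relations identify the matched atom pairs: the two component cuts perform the `Cartesian division' of \cref{iup_same_cliques} on each factor separately, while the single cut performs it on the product, and these agree because coherence on a tensor is computed componentwise. Making this bookkeeping precise, rather than appealing to \cref{i_dont_care_what_your_transformation_is}, is the only genuinely laborious part, which is exactly why I would present the short proof above as the proof of record.
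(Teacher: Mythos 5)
Your proof of record coincides with the paper's own argument: the paper proves \cref{equiv_cut} by a one-line appeal to \cref{i_dont_care_what_your_transformation_is}, and your observation that $\transcut$ leaves the endsequent unchanged is exactly the (implicit) justification that makes this appeal legitimate. The supplementary case-by-case sketch you offer goes beyond what the paper records, but since you explicitly designate the short argument as the actual proof, your approach is essentially the same as the paper's.
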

\begin{proof}
  This is a result of \cref{i_dont_care_what_your_transformation_is}.
\end{proof}

\section{Conclusions}

In this paper we consider two formal systems describing
multiplicative linear logic. We have provided a new translation from
the deep inference system to the sequent calculus one, and we have
illustrated that such translations cannot be expected to be mutually
inverse due to the use of the cut rule. Our translation reduces the
number of cuts introduced and it produces smaller derivations such that
the number of axiomatic and cut rules are equal.

We formulate a necessary property for sequents or formulae to be
derivable.

We finally describe for both systems how to interpret derivations as
cliques in coherence (or concordance) spaces, based on a valuation of
atoms. These interpretations are invariant under the translations.
We may think of this as being summarised informally by the following
diagram. The triangles we may form do commute, but as argued above,
the translation processes are not mutually inverse. We have shown that
this model of linear logic is a representation of proofs that maintains
equality under the bureaucracy of both sequent calculus and deep inference.
\[
\begin{tikzcd}
  \mllm{SC}
    \arrow[loop left, "\transcut"]
    \arrow{rd} %
    \arrow{rr}{\transscdi}
  & &
  \mllm{DI}
    \arrow[ll, bend left=-40, distance=2cm, outer sep=-0.45cm, "\transdiscalt"]
    \arrow[ll, bend left=-90, distance=3cm, outer sep=-0.45cm, "\transdisc"]
    \arrow[ld] %
  \\
    & \concat &
\end{tikzcd}
\]

There are of course other models of linear logic, such as those
that use a more complete or full system of linear logic, or those that make use of alternative models
that are quite different to ours,
such as handsome proof nets (based on cographs) that uses disjoint unions instead of cross products \cite{retore2003}.
Some questions that are not covered are if this model maintains equality of all
cliques of the same formula under a full system of linear logic, or if other models
do the same under transformation procedures. We leave these as future research questions.

\makeatletter
\interlinepenalty=10000
\makeatother
\bibliographystyle{alphaurl}
\bibliography{bibl}

\end{document}